\documentclass[11pt]{amsart}
\usepackage{amsfonts} 
\usepackage{amssymb}
\usepackage{amsthm}
\usepackage{amsmath} 
\usepackage{mathrsfs}
\usepackage{mathtools}
\usepackage{dsfont}
\usepackage{esint}
\usepackage{ulem}
\usepackage{marginnote}
\usepackage{array}
\usepackage{algorithm}
\usepackage{algpseudocode}
\usepackage{soul}
\usepackage{comment}

\usepackage{booktabs}

\usepackage{longtable}

\usepackage[all]{xy}

\usepackage[english]{babel} 
\usepackage[left=2cm,right=2cm,top=3.5cm,bottom=3cm,headsep=0.7cm]{geometry}
\usepackage{xargs}

\usepackage{csquotes}
\usepackage{stmaryrd}
\usepackage[colorinlistoftodos,prependcaption,textsize=tiny]{todonotes}
\newcommandx{\unsure}[2][1=]{\todo[linecolor=red,backgroundcolor=red!25,bordercolor=red,#1]{#2}}
\newcommandx{\change}[2][1=]{\todo[linecolor=blue,backgroundcolor=blue!25,bordercolor=blue,#1]{#2}}
\newcommandx{\info}[2][1=]{\todo[linecolor=OliveGreen,backgroundcolor=OliveGreen!25,bordercolor=OliveGreen,#1]{#2}}
\newcommandx{\improvement}[2][1=]{\todo[linecolor=Plum,backgroundcolor=Plum!25,bordercolor=Plum,#1]{#2}}
\newcommandx{\thiswillnotshow}[2][1=]{\todo[disable,#1]{#2}}

\usepackage{pgf,tikz}
\usetikzlibrary{positioning, calc, arrows.meta}
\usetikzlibrary{arrows}
\usepackage{subcaption}
\usepackage{graphicx}
\usepackage[parfill]{parskip}

\usepackage{enumerate}
\usepackage{enumitem}

\usepackage{hyperref}
\usepackage[nameinlink]{cleveref}
\usepackage{mathtools}
\hypersetup{
    colorlinks,
    linkcolor={red!80!black},
    citecolor={blue!80!black}
}

\crefname{assumption}{assumption}{assumptions}  
\Crefname{assumption}{Assumption}{Assumptions}  
\crefname{counterexample}{counterexample}{counterexamples}
\Crefname{counterexample}{Counterexample}{Counterexamples}

\theoremstyle{plain}
\begingroup
\theoremstyle{plain}
\newtheorem{theorem}{Theorem}[section]
\newtheorem{corollary}[theorem]{Corollary}
\newtheorem{proposition}[theorem]{Proposition}
\newtheorem{lemma}[theorem]{Lemma}
\theoremstyle{definition}
\newtheorem{definition}[theorem]{Definition}

\theoremstyle{remark}
\newtheorem{remark}[theorem]{Remark}

\endgroup

\theoremstyle{definition}
\theoremstyle{remark}

\numberwithin{equation}{section}

\newcommand{\N}{\mathbb{N}}

\mathchardef\emptyset="001F

\newcommand{\SCIG}{\operatorname{SCI}_G}

\newcommand{\Ev}{\operatorname{Ev}}

\title[\bf Family Exactness For The SCI]{\bf From Witness-Space Sharpness To Family-Pointwise Exactness For The Solvability Complexity Index}

\begin{document}

\author[C.~Sorg]{Christopher Sorg$^1$}
	\address[C.~Sorg]{
		\textup{Chair for Theoretical computer science, mathematics, and operations research}
		\newline \indent
		\textup{Department of Computer Science} \newline \indent
		\textup{University of the Bundeswehr Munich}
		\newline \indent
		\textup{85577 Neubiberg, Germany}}

\email{{\href{mailto:chr.sorg@unibw.de}{\textcolor{blue}{\texttt{chr.sorg@unibw.de}}}}
}

\footnotetext[1]{Inf1, University of the Bundeswehr Munich, Werner-Heisenberg-Weg 39, 85577 Neubiberg, Germany}

\begin{abstract}
We study how exact Solvability Complexity Index (SCI) statements should be formulated for
families of computational problems rather than for single problems. While the equality
\(\SCIG (\mathcal P)=k\) is unambiguous for an individual computational problem \(\mathcal P\), the family setting
requires one to distinguish family-pointwise exactness, witness-space sharpness, and worst-case
exactness. We formalize this trichotomy, prove that witness-space sharpness coincides with
worst-case exactness but is, in general, strictly weaker than family-pointwise exactness, and give a canonical
source-family example witnessing the strictness. We then establish two positive upgrade theorems: an abstract pullback principle and a
concrete finite-query criterion guaranteeing that witness-space sharpness upgrades to
family-pointwise exactness. Next, we introduce a decoder-regular finite-query transport preorder
on SCI computational problems, prove that it is a preorder, derive a transport-saturation
sufficient criterion extending the principal-source package, and show that the associated
transport degrees need not form a lattice in full generality. We analyze the natural decoder
classes \(\mathscr R_{\mathrm{cont}}\) and \(\mathscr R_{\mathrm{Bor}}\): on the full class the
corresponding quotients are not upper semilattices, while on the nondegenerate subclass the
preorder is upward and downward directed. Finally, we exhibit two natural positive families
realizing the principal transport mechanism: exact integration on compact intervals and a
fixed-window spectral decision family obtained by block-diagonal stabilization.
\end{abstract}

\maketitle

\begin{center}\small
\textbf{Keywords:} Solvability Complexity Index (SCI); family-pointwise exactness; finite-query evaluation reduction; transport degrees; spectral computations; information-based complexity
\end{center}
\tableofcontents

\section{Introduction}

For a computational problem \(\mathcal P\), let \(\SCIG(\mathcal P)\) denote its type-\(G\)
solvability complexity index in the sense of \cite[Def.~2.6, Def.~2.10 and Def.~2.19]{Sor26}. If
\(\mathcal F=(\mathcal P_i)_{i\in I}\) is a nonempty family and \(k\in\mathbb N_0\), then three distinct
family-level sharpness notions arise: family-pointwise exactness means
\(\SCIG(\mathcal P_i)=k\) for every \(i\in I\); witness-space sharpness means
\(\SCIG(\mathcal P_i)\le k\) for every \(i\in I\) and
\(\SCIG(\mathcal P_{i^\ast})=k\) for some \(i^\ast\in I\); worst-case exactness
means \(\sup_{i\in I}\SCIG(\mathcal P_i)=k\). The spectral origins of the SCI
hierarchy \cite{hansen2011solvability, BACH15, ColbrookHansen22} show that these
notions need not coincide. The first aim of this paper is therefore to formulate the
following upgrade problem precisely: under which natural hypotheses does witness-space
sharpness imply family-pointwise exactness?

A second aim is structural. The exact-input SCI framework allows one to formulate family-level
lower-bound transfer in a way that separates three ingredients: a hard source problem, a
transport mechanism from that source into all family members, and a family-wide upper bound.
This leads naturally to finite-query evaluation reductions, to a decoder-regular finite-query
transport preorder on SCI computational problems, and to a transport-saturation sufficient
criterion for family-pointwise exactness.

The main results are as follows. \Cref{sec:SCIdefs} recalls the raw type-\(G\) SCI framework and
introduces the three family-level exactness notions. \Cref{sec:strictness-source-family} gives an elementary
strictness example showing that witness-space sharpness does not imply family-pointwise
exactness in arbitrary families. \Cref{sec:AbstrUpgrTh} proves an abstract pullback principle and a
concrete finite-query criterion.  It then introduces the decoder-regular finite-query
transport preorder, proves that it is a preorder, derives a transport-saturation
criterion extending the principal-source package, and shows that the corresponding
transport-degree structures need not be lattices in full generality. \Cref{sec:interinteupgrade} supplies
two natural families where the positive criterion applies: exact integration on compact
intervals and a fixed-window spectral decision family obtained by block-diagonal
stabilization. \Cref{sec:openproblems} records limitations and open problems. \Cref{sec:LattTranspQuot} analyzes
the transport-degree quotients for the natural decoder classes \(\mathscr R_{\mathrm{cont}}\) and
\(\mathscr R_{\mathrm{Bor}}\).

This article is written as a companion to \cite{Sor26}.  The companion paper develops
the raw and implemented SCI framework, the finite-query collapse mechanism, and
canonical finite-height source problems.  The present paper is largely self-contained at
the raw type-\(G\) level, but it uses the terminology and notation of~\cite{Sor26} for
SCI computational problems, general algorithms, standard raw type-\(G\) towers, and
finite-query factorisation.

\textbf{Notation.} Throughout, \(\mathbb N:=\{1,2,3,\dots\}\) and \(\mathbb N_0:=\{0,1,2,\dots\}\).

\section{SCI Preliminaries And Family-Level Exactness}{\label{sec:SCIdefs}}
\begin{definition}[Computational problem {\cite[Def.~2.4]{Sor26}}]\label{def:SCIcompProb}
A computational problem is a tuple
\[
\mathcal P=(\Xi,\Omega,(\mathcal M,d),\Lambda),
\]
where \(\Omega\) is the input class, \((\mathcal M,d)\) is a metric output space, \(\Xi:\Omega\to \mathcal M\) is the target map, and \(\Lambda\) is a family of complex-valued evaluation maps on \(\Omega\), subject to the consistency assumption
\[
\Xi(A)\neq \Xi(B)\ \Longrightarrow\ \exists f\in\Lambda\text{ with }f(A)\neq f(B).
\]
Equivalently, \(\Xi\) factors through the full evaluation table \(\Ev_\Lambda\) by \cite[Thm.~3.2]{Sor26}.
\end{definition}

\begin{definition}[General algorithm, standard raw type-\(G\) tower, and \(\SCIG\) {\cite[Def.~2.6, Def.~2.10 and Def.~2.19]{Sor26}}]\label{def:GenAlgoSCI}
Let
\[
        \mathcal P=(\Xi,\Omega,(\mathcal M,d),\Lambda)
\]
be a computational problem.

\begin{enumerate}
\item[(i)] A \textit{general algorithm} on \(\mathcal P\) is a pair
\[
        (\Gamma,\Lambda_\Gamma), \quad \Gamma:\Omega\to \mathcal M,\quad \Lambda_\Gamma:\Omega\to[\Lambda]^{<\omega}\setminus\{\varnothing\},
\]
such that for all \(A,B\in\Omega\),
\[
        \bigl(\forall f\in\Lambda_\Gamma(A)\; f(B)=f(A)\bigr) \Longrightarrow
        \bigl(\Gamma(B)=\Gamma(A)\ \text{and}\ \Lambda_\Gamma(B)=\Lambda_\Gamma(A)\bigr).
\]

\item[(ii)] A \textit{standard raw type-\(G\) tower of height \(0\)} for \(\mathcal P\) is a general algorithm \((\Gamma,\Lambda_\Gamma)\) with
\[
        \Gamma(A)=\Xi(A)
\]
for $A\in\Omega$.

\item[(iii)] For \(k\in \mathbb{N}\), a \textit{standard raw type-\(G\) tower of height \(k\)} for \(\mathcal P\) is a family of general algorithms
\[
        \Gamma_{n_k,\ldots,n_1}:\Omega\to \mathcal M
\]
for $(n_1,\ldots,n_k)\in\mathbb N^k$, such that for every \(A\in\Omega\),
\[
        \Xi(A)= \lim_{n_k\to\infty} \lim_{n_{k-1}\to\infty} \cdots \lim_{n_1\to\infty} \Gamma_{n_k,\ldots,n_1}(A)
\]
in the metric \(d\). The intermediate maps
\[
        \Gamma_{n_k},\Gamma_{n_k,n_{k-1}},\ldots,\Gamma_{n_k,\ldots,n_2}
\]
are only semantic pointwise limits. They are \textit{not} required to be general algorithms.

\item[(iv)] The raw type-\(G\) solvability complexity index is
\[
        \SCIG(\mathcal P) := \min\{k\in\mathbb N_0: \mathcal P\text{ has a standard raw type-\(G\) tower of height }k\},
\]
with value \(\infty\) if no such \(k\) exists.
\end{enumerate}
\end{definition}

\begin{remark}[Convention on suppressed query maps]
When a general algorithm is denoted only by its output map $\Gamma$, the accompanying finite-query map $\Lambda_\Gamma$ is understood implicitly.
Likewise, in a type-$G$ tower we often display only the output maps and suppress the associated query-set maps.
\end{remark}

\begin{definition}[Finite-query factorisation {\cite[Def.~7.3]{Sor26}}]
Let \(\mathcal P=(\Xi,\Omega,(\mathcal M,d),\Lambda)\). We say that \(\Xi\) admits a \textit{finite-query factorisation through \(\Lambda\)} if there exist \(f_1,\dots,f_m\in \Lambda\) and a map
\[
G:\operatorname{im}(f_1,\dots,f_m)\to \mathcal M
\]
such that
\[
\Xi(A)=G(f_1(A),\dots,f_m(A))\qquad(A\in\Omega).
\]
\end{definition}

\begin{theorem}[Finite-query collapse {\cite[Thm.~7.4]{Sor26}}]{\label{thm:FiniteQueryColl}}
If \(\Xi\) admits a finite-query factorisation through \(\Lambda\), then
\[
\SCIG(\mathcal P)=0.
\]
\end{theorem}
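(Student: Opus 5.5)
We exhibit a standard raw type-\(G\) tower of height \(0\) directly, that is, a single general algorithm \((\Gamma,\Lambda_\Gamma)\) with \(\Gamma=\Xi\) on \(\Omega\). Fix the data of the factorisation: \(f_1,\dots,f_m\in\Lambda\) and \(G:\operatorname{im}(f_1,\dots,f_m)\to\mathcal M\) with \(\Xi(A)=G(f_1(A),\dots,f_m(A))\) for all \(A\in\Omega\). We may assume \(m\ge 1\). If \(m=0\), then \(\Xi\) is constant; in that case we take \(\Lambda_\Gamma(A):=\{f_0\}\) for an arbitrary fixed \(f_0\in\Lambda\), using that \(\Lambda\) is nonempty, which is implicit since \(\Lambda_\Gamma\) must take values in \([\Lambda]^{<\omega}\setminus\{\varnothing\}\).

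Define the constant query map
\[
\Lambda_\Gamma(A):=\{f_1,\dots,f_m\}\in[\Lambda]^{<\omega}\setminus\{\varnothing\}
\]
for all \(A\in\Omega\), and set \(\Gamma(A):=G(f_1(A),\dots,f_m(A))=\Xi(A)\). This is well defined because \((f_1(A),\dots,f_m(A))\) lies in \(\operatorname{im}(f_1,\dots,f_m)\) by definition.

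We then verify the consistency axiom in Definition~\ref{def:GenAlgoSCI}(i). Suppose \(A,B\in\Omega\) satisfy \(f(B)=f(A)\) for every \(f\in\Lambda_\Gamma(A)\), that is, \(f_j(B)=f_j(A)\) for \(j=1,\dots,m\). Then \(\Gamma(B)=G(f_1(B),\dots,f_m(B))=G(f_1(A),\dots,f_m(A))=\Gamma(A)\). Moreover \(\Lambda_\Gamma(B)=\Lambda_\Gamma(A)\) holds trivially because \(\Lambda_\Gamma\) is constant. So \((\Gamma,\Lambda_\Gamma)\) is a general algorithm with \(\Gamma=\Xi\), hence a tower of height \(0\) by Definition~\ref{def:GenAlgoSCI}(ii). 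Since \(\SCIG\) takes values in \(\mathbb N_0\cup\{\infty\}\) and the minimum in Definition~\ref{def:GenAlgoSCI}(iv) is attained at \(0\), we conclude \(\SCIG(\mathcal P)=0\).

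There is no real obstacle. The only points needing care are the nonemptiness requirement on \(\Lambda_\Gamma\), handled by the case \(m=0\), and choosing \(\Lambda_\Gamma\) constant, which makes the second half of the consistency condition automatic. An adaptive choice of queries would not guarantee that half.
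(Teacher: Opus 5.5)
Your construction is correct and is the standard proof: the constant query map $\Lambda_\Gamma\equiv\{f_1,\dots,f_m\}$ together with $\Gamma:=G\circ(f_1,\dots,f_m)=\Xi$ makes the second half of the general-algorithm condition automatic, and the first half is exactly the factorisation identity. The paper does not reprove the statement (it cites \cite[Thm.~7.4]{Sor26}), but it uses the same constant-query pattern in \cref{lem:P0-height-zero} and \cref{lem:degenerate-interval-height-zero}.

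The one thing to fix is your aside on $m=0$. Nonemptiness of $\Lambda$ is not ``implicit'' there. The requirement that $\Lambda_\Gamma$ take values in $[\Lambda]^{<\omega}\setminus\{\varnothing\}$ is a condition your construction has to meet, not a hypothesis you may use. Moreover, \cref{rem:empty-evaluation-families} explicitly allows $\Lambda=\varnothing$ when $\Xi$ is constant.

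If $m=0$ were permitted, the statement would in fact be false. Take $\Omega=\{\ast\}$, $\Lambda=\varnothing$ and constant $\Xi$: then $\Xi$ factors through the empty tuple, yet no general algorithm exists at all, so $\SCIG(\mathcal P)=\infty$. The correct reading is that $m\in\mathbb N=\{1,2,\dots\}$ in the definition of finite-query factorisation, matching $m_f\in\mathbb N$ in \cref{def:fq-eval-reduction}. Then $\{f_1,\dots,f_m\}$ is automatically a nonempty finite subset of $\Lambda$, and the $m=0$ case should simply be dropped.
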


\begin{definition}[Family-level upper and lower bounds]
Let \(\mathcal F=(\mathcal P_i)_{i\in I}\) be a nonempty family of computational problems.
For \(k\in \N_0\cup\{\infty\}\), write
\[
\mathrm{UB}_k(\mathcal F)\;:\Longleftrightarrow\; \forall i\in I,\ \SCIG(\mathcal P_i)\leq k,
\]
\[
\mathrm{LB}_k(\mathcal F)\;:\Longleftrightarrow\; \forall i\in I,\ \SCIG(\mathcal P_i)\geq k.
\]
\end{definition}

Throughout the paper, every family \(\mathcal F=(\mathcal P_i)_{i\in I}\) is assumed to be nonempty.
\begin{definition}[The three sharpness notions]{\label{def:sharpNot}}
Let \(\mathcal F=(\mathcal P_i)_{i\in I}\) and \(k\in \N_0\).
\begin{enumerate}[label=(\roman*)]
	\item \textbf{Exactness for a fixed problem:}
	For a single computational problem \(\mathcal P\), \textit{exact type-$G$ height \(k\)} means
	\[
	\SCIG(\mathcal P)=k.
	\]
	
	\item \textbf{Family-pointwise exactness at \(k\):}
	We say that \(\mathcal F\) is pointwise exact at \(k\) if
	\[
	\forall i\in I,\ \SCIG(\mathcal P_i)=k.
	\]
	Equivalently, \(\mathcal F\) satisfies both \(\mathrm{UB}_k(\mathcal F)\) and \(\mathrm{LB}_k(\mathcal F)\).
	
	\item \textbf{Witness-space sharpness at \(k\):}
	We say that \(\mathcal F\) is witness-space sharp at \(k\) if
	\[
	\forall i\in I,\ \SCIG(\mathcal P_i)\leq k \quad\text{and}\quad
	\exists i^\ast\in I,\ \SCIG(\mathcal P_{i^\ast})=k.
	\]
	
	\item \textbf{Worst-case exactness at \(k\):}
	We say that \(\mathcal F\) is worst-case exact at \(k\) if
	\[
	\sup_{i\in I}\SCIG(\mathcal P_i)=k.
	\]
\end{enumerate}
\end{definition}

The three family-level notions isolate three different logical strengths.
Family-pointwise exactness is a uniform statement: every member of the family has the same exact height $k$.
Witness-space sharpness is weaker: it consists of a uniform upper bound by $k$ together with a single family member that attains height $k$.
Worst-case exactness is the equivalent supremum reformulation of the same phenomenon. Thus the upgrade problem studied in this paper is exactly the problem of turning one
extremal witness into a uniform lower bound for the whole family.

\begin{remark}[Parametrized sets as families]
Whenever a class of problems is written in set form
\[
\mathcal F=\{\mathcal P_\xi:\xi\in J\},
\]
we identify it with the indexed family $(\mathcal P_\xi)_{\xi\in J}$.
\end{remark}

\begin{lemma}[Witness-space sharpness \(=\) worst-case exactness]{\label{lem:WitWorstEq}}
Let \(\mathcal F=(\mathcal P_i)_{i\in I}\) and let \(k\in\N_0\). Then the following are equivalent:
\begin{enumerate}[label=(\alph*)]
	\item \(\mathcal F\) is witness-space sharp at \(k\);
	\item \(\mathcal F\) is worst-case exact at \(k\).
\end{enumerate}
\end{lemma}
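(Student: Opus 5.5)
The plan is to prove both implications directly from \Cref{def:sharpNot}. The only real content is that the supremum in (iv) is taken in the well-ordered set \(\N_0\cup\{\infty\}\), so a supremum equal to a finite value must be attained. Throughout, \(\SCIG(\mathcal P_i)\in\N_0\cup\{\infty\}\), ordered in the usual way with \(\infty\) as the top element, and \(I\neq\varnothing\) by the standing convention.

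For (a)\(\Rightarrow\)(b), suppose \(\SCIG(\mathcal P_i)\le k\) for all \(i\in I\) and \(\SCIG(\mathcal P_{i^\ast})=k\). Then \(k\) is an upper bound of \(\{\SCIG(\mathcal P_i):i\in I\}\), so the supremum is \(\le k\). Since the set contains \(k\), the supremum is also \(\ge k\). Hence \(\sup_{i\in I}\SCIG(\mathcal P_i)=k\).

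For (b)\(\Rightarrow\)(a), suppose \(\sup_{i\in I}\SCIG(\mathcal P_i)=k\) with \(k\in\N_0\). Every member satisfies \(\SCIG(\mathcal P_i)\le k\), which is the uniform upper bound. In particular no member has value \(\infty\), so all values lie in the finite set \(\{0,1,\dots,k\}\). The set \(S:=\{\SCIG(\mathcal P_i):i\in I\}\) is therefore a nonempty subset of \(\{0,\dots,k\}\). A nonempty finite subset of \(\N_0\) contains its maximum, and that maximum is its supremum, so \(k\in S\). This gives some \(i^\ast\) with \(\SCIG(\mathcal P_{i^\ast})=k\).

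The only step needing care is the attainment claim in (b)\(\Rightarrow\)(a). It depends on two facts: \(k\) is finite, which bounds the values into a finite set, and \(I\) is nonempty, which prevents \(\sup\varnothing=0\) from producing a spurious equality at \(k=0\). I would state both explicitly. No results about towers or about \Cref{thm:FiniteQueryColl} are needed; the argument is purely order-theoretic.
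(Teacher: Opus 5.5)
Your proof is correct and follows the paper's argument: both directions are read off from \cref{def:sharpNot}, and the only substantive point is that a supremum equal to a finite $k$ is attained. Your justification of that step is slightly more precise than the paper's appeal to the well-ordering of $\mathbb N_0\cup\{\infty\}$, since well-ordering by itself guarantees attained infima rather than suprema; you instead use that all values lie in the finite set $\{0,\dots,k\}$ and that $I\neq\varnothing$.
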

\begin{proof}
If \(\mathcal F\) is witness-space sharp at \(k\), then \(\SCIG(\mathcal P_i)\leq k\) for all \(i\) and \(\SCIG(\mathcal P_{i^\ast})=k\) for some \(i^\ast\), hence \(\sup_i \SCIG(\mathcal P_i)=k\).

Conversely, if \(\sup_i \SCIG(\mathcal P_i)=k<\infty\), then automatically \(\SCIG(\mathcal P_i)\leq k\) for all \(i\). Since the range of \(\SCIG\) is the well-ordered set \(\N_0\cup\{\infty\}\), the equality \(\sup_i\SCIG(\mathcal P_i)=k\in \N_0\) implies that \(k\) is attained by some \(i^\ast\), hence \(\SCIG(\mathcal P_{i^\ast})=k\).
\end{proof}

\begin{proposition}[Logical relation between the three notions]{\label{prop:LogRelNot}}
For every family \(\mathcal F=(\mathcal P_i)_{i\in I}\) and every \(k\in\N_0\),
\begin{align*}
	\text{family-pointwise exactness at }k \Longrightarrow
	\text{witness-space sharpness at }k \\ \Longleftrightarrow
	\text{worst-case exactness at }k.
\end{align*}
The reverse implication from witness-space sharpness to family-pointwise exactness is false in general.
\end{proposition}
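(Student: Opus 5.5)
The plan has two parts: the forward implications, and a counterexample for the reverse direction.

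\textbf{Forward implications.} If \(\mathcal F\) is family-pointwise exact at \(k\), then \(\SCIG(\mathcal P_i)=k\le k\) for every \(i\in I\). Since families are assumed nonempty, we can pick any \(i^\ast\in I\) as the witness with \(\SCIG(\mathcal P_{i^\ast})=k\). This gives witness-space sharpness. The equivalence of witness-space sharpness and worst-case exactness is exactly \Cref{lem:WitWorstEq}, so that part needs no further work.

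\textbf{Failure of the reverse implication.} It suffices to build a single family that is witness-space sharp but not pointwise exact. I will use the two-member family \(\mathcal F=(\mathcal P_1,\mathcal P_2)\) and take \(k=1\), the smallest nontrivial level. This requires two problems:
\begin{itemize}
\item \(\mathcal P_2\), a trivial problem with \(\SCIG(\mathcal P_2)=0\). Take \(\Omega=\{A\}\), \(\mathcal M=\{0\}\), and \(\Lambda=\{f\}\) with \(f\) constant. The target \(\Xi\equiv 0\) factors through \(f\), so \Cref{thm:FiniteQueryColl} gives \(\SCIG(\mathcal P_2)=0\).
\item \(\mathcal P_1\), a problem with \(\SCIG(\mathcal P_1)=1\). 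Take \(\Omega=\{0,1\}^{\mathbb N}\), \(\Lambda=\{f_j: f_j(x)=x_j\}\), output space \(\{0,1\}\) with the discrete metric, and \(\Xi(x)=1\) iff \(x\) has some nonzero entry.
\end{itemize}

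For the upper bound on \(\mathcal P_1\), use the height-\(1\) tower \(\Gamma_n(x)=\max_{j\le n}x_j\) with query set \(\Lambda_{\Gamma_n}(x)=\{f_1,\dots,f_n\}\). The query set is constant in \(x\) and the output depends only on the queried values, so each \(\Gamma_n\) is a general algorithm. Moreover \(\Gamma_n(x)\) is eventually constant and equal to \(\Xi(x)\), so \(\SCIG(\mathcal P_1)\le 1\). Alternatively, if \cite{Sor26} provides a canonical height-\(1\) source problem, I would cite it instead.

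For the lower bound, the main obstacle is showing \(\SCIG(\mathcal P_1)\neq 0\), i.e.\ that no general algorithm \(\Gamma\) satisfies \(\Gamma=\Xi\). This is a standard adversary argument.
\begin{enumerate}
\item Let \(z=0^{\mathbb N}\). The set \(\Lambda_\Gamma(z)\) is finite, so choose an index \(j\) with \(f_j\notin\Lambda_\Gamma(z)\).
\item Let \(y=e_j\), the sequence with a single \(1\) in position \(j\). Then \(y\) agrees with \(z\) on every \(f\in\Lambda_\Gamma(z)\).
\item The consistency clause in \Cref{def:GenAlgoSCI}(i) forces \(\Gamma(y)=\Gamma(z)\). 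But \(\Xi(y)=1\neq 0=\Xi(z)\), so \(\Gamma\) cannot equal \(\Xi\).
\end{enumerate}
Hence \(\SCIG(\mathcal P_1)=1\).

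\textbf{Conclusion.} We have \(\sup_i\SCIG(\mathcal P_i)=1\) with the witness \(i^\ast=1\), so \(\mathcal F\) is witness-space sharp at \(1\). Since \(\SCIG(\mathcal P_2)=0\neq 1\), \(\mathcal F\) is not pointwise exact at \(1\), which disproves the reverse implication.
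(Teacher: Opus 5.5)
Your proof is correct and follows the paper's route: the forward direction comes from the definitions (using nonemptiness of the family) together with \cref{lem:WitWorstEq}, and the reverse direction is refuted by the same two-member family of a singleton height-zero problem and the coordinate-existence problem \(\mathcal P_{\exists}\), as in \cref{cor:source-family-strictness}. The only difference is that you verify \(\SCIG(\mathcal P_{\exists})=1\) directly, via the tower \(\max_{j\le n}x_j\) and the adversary argument at the zero sequence, whereas the paper cites the Cantor-matrix source result of \cite{Sor26}; this makes your version fully self-contained.
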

\begin{proof}
The equivalence between witness-space sharpness and worst-case exactness is exactly \cref{lem:WitWorstEq}; the forward implication from family-pointwise exactness to witness-space sharpness follows directly from \cref{def:sharpNot}; the failure of the reverse implication is proved by \cref{cor:source-family-strictness} later.
\end{proof}

\section{A Canonical Strictness Example}\label{sec:strictness-source-family}

The purpose of this section is to show, without using any external operator-theoretic
classification, that witness-space sharpness is strictly weaker than family-pointwise
exactness.  The example is deliberately elementary: it is a one-limit coordinate-search
problem together with a height-zero degenerate problem.

\begin{definition}[The coordinate-existence source problem]
\label{def:coordinate-existence-source}
Let
\[
        \Omega_{\exists}:=\{0,1\}^{\mathbb N}
\]
with coordinate evaluations
\[
        \lambda_n:\Omega_{\exists}\to\mathbb C,
        \qquad
        \lambda_n(a):=a_n
\]
with $n\in\mathbb N$. Set
\[
        \Lambda_{\exists}:=\{\lambda_n:n\in\mathbb N\}.
\]
Define
\[
        \Xi_{\exists}:\Omega_{\exists}\to\{0,1\}
\]
by
\[
        \Xi_{\exists}(a)=1
        \quad\Longleftrightarrow\quad
        \bigl( \exists n\in\mathbb N\bigr) \ a_n=1.
\]
Equip \(\{0,1\}\) with the discrete metric
\[
        d_{\mathrm{disc}}(u,v):=
        \begin{cases}
        0,&u=v,\\
        1,&u\ne v.
        \end{cases}
\]
Finally define then
\[
        \mathcal P_{\exists}:= (\Xi_{\exists},\Omega_{\exists},(\{0,1\},d_{\mathrm{disc}}),\Lambda_{\exists}).
\]
\end{definition}

\begin{lemma}\label{lem:P-exists-well-defined}
\(\mathcal P_{\exists}\) is an SCI computational problem.
\end{lemma}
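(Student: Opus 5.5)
To prove that \(\mathcal P_{\exists}\) is an SCI computational problem, I will check the components of \cref{def:SCIcompProb} one at a time. The only substantive condition is the consistency assumption; everything else is well-definedness.

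\textbf{Routine items.}
\begin{enumerate}
\item \(\Omega_{\exists}=\{0,1\}^{\mathbb N}\) is a set.
\item \((\{0,1\},d_{\mathrm{disc}})\) is a metric space, since the discrete metric is symmetric, vanishes exactly on the diagonal, and satisfies the triangle inequality trivially.
\item \(\Xi_{\exists}\) is a well-defined map \(\Omega_{\exists}\to\{0,1\}\). For each \(a\), exactly one of ``\(\exists n\) with \(a_n=1\)'' and its negation holds. So \(\Xi_{\exists}(a)\) is set to \(1\) in the first case and \(0\) otherwise, and the defining biconditional pins down this value.
\item Each \(\lambda_n\) takes values in \(\{0,1\}\subset\mathbb C\). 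Hence \(\Lambda_{\exists}\) is a family of complex-valued evaluation maps on \(\Omega_{\exists}\).
\end{enumerate}

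\textbf{Consistency.} I will prove the stronger statement that \(\Lambda_{\exists}\) separates points of \(\Omega_{\exists}\). Take \(a,b\in\Omega_{\exists}\) with \(\Xi_{\exists}(a)\ne\Xi_{\exists}(b)\). Then in particular \(a\ne b\), because \(\Xi_{\exists}\) is a function. Two distinct sequences in \(\{0,1\}^{\mathbb N}\) differ in some coordinate \(n\), so \(\lambda_n(a)=a_n\ne b_n=\lambda_n(b)\) with \(\lambda_n\in\Lambda_{\exists}\).

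This can also be made explicit without going through \(a\ne b\). Without loss of generality \(\Xi_{\exists}(a)=1\) and \(\Xi_{\exists}(b)=0\). Then some \(n\) has \(a_n=1\), while \(b_m=0\) for all \(m\), so \(\lambda_n\) distinguishes \(a\) and \(b\).

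I do not expect any real obstacle here. The only point needing a moment's care is the consistency condition, and point separation by the coordinate evaluations settles it immediately.
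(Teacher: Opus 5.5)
Your proposal is correct, and your explicit consistency argument is exactly the paper's proof: assume without loss of generality \(\Xi_{\exists}(a)=1\) and \(\Xi_{\exists}(b)=0\), pick \(n\) with \(a_n=1\), and note \(b_n=0\), so \(\lambda_n\) separates \(a\) and \(b\). Your alternative via point separation of \(\Lambda_{\exists}\), and your routine well-definedness checks, are valid additions that the paper omits.
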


\begin{proof}
We only need to check the consistency condition. Let \(a,b\in\Omega_{\exists}\) and assume
\[
        \Xi_{\exists}(a)\ne\Xi_{\exists}(b).
\]
Without loss of generality,
\[
        \Xi_{\exists}(a)=1, \quad
        \Xi_{\exists}(b)=0.
\]
Then there is an \(n\in\mathbb N\) with \(a_n=1\). Since \(\Xi_{\exists}(b)=0\), one has \(b_j=0\) for every \(j\), in particular \(b_n=0\). Hence
\[
        \lambda_n(a)=1\ne0=\lambda_n(b).
\]
Thus the evaluation family separates every pair of inputs with different target values.
\end{proof}

\begin{proposition}[The coordinate-existence source has exact type-G height one]\label{prop:P-exists-height-one}
One has
\[
        \mathrm{SCI}_G(\mathcal P_{\exists})=1.
\]
\end{proposition}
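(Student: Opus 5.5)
We prove two things: an upper bound $\SCIG(\mathcal P_{\exists})\le 1$, by exhibiting an explicit height-one tower, and a lower bound $\SCIG(\mathcal P_{\exists})\ge 1$, by showing that no general algorithm computes $\Xi_{\exists}$ exactly.

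\textbf{Upper bound.} For $n\in\mathbb N$ we define
\[
\Gamma_n(a):=\max\{a_1,\dots,a_n\},\qquad \Lambda_{\Gamma_n}(a):=\{\lambda_1,\dots,\lambda_n\}.
\]
The query set is constant and nonempty. If $b$ agrees with $a$ on $\lambda_1,\dots,\lambda_n$, then $\Gamma_n(b)=\Gamma_n(a)$ and the query sets coincide, so each $(\Gamma_n,\Lambda_{\Gamma_n})$ is a general algorithm in the sense of \cref{def:GenAlgoSCI}(i). For convergence, fix $a$. If $\Xi_{\exists}(a)=0$, then $\Gamma_n(a)=0$ for all $n$. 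If $\Xi_{\exists}(a)=1$, pick $n_0$ with $a_{n_0}=1$; then $\Gamma_n(a)=1$ for all $n\ge n_0$. Either way the sequence is eventually constant, so it converges in $d_{\mathrm{disc}}$ to $\Xi_{\exists}(a)$. This gives a standard raw type-$G$ tower of height $1$.

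\textbf{Lower bound.} Suppose, for contradiction, that $(\Gamma,\Lambda_\Gamma)$ is a height-zero tower, so that $\Gamma=\Xi_{\exists}$ on $\Omega_{\exists}$. Apply it to the zero sequence $\mathbf 0$. The set $\Lambda_\Gamma(\mathbf 0)$ is finite, so it lies in $\{\lambda_1,\dots,\lambda_N\}$ for some $N$. Define $b:=e_{N+1}$, the sequence that is $1$ at coordinate $N+1$ and $0$ elsewhere. Then $f(b)=f(\mathbf 0)$ for every $f\in\Lambda_\Gamma(\mathbf 0)$. The consistency axiom of a general algorithm therefore forces $\Gamma(b)=\Gamma(\mathbf 0)$. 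But $\Gamma(b)=\Xi_{\exists}(b)=1$ and $\Gamma(\mathbf 0)=\Xi_{\exists}(\mathbf 0)=0$, a contradiction. Hence $\SCIG(\mathcal P_{\exists})\ge1$, and combined with the upper bound we get equality.

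The only point needing care is the adversary step in the lower bound. It must be applied at the specific input $\mathbf 0$, whose query set is fixed before $b$ is chosen; the general-algorithm axiom then transfers the output from $\mathbf 0$ to $b$, not the other way round. The rest is routine bookkeeping against the definitions.
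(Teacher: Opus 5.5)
Your proof is correct, but it takes a different route from the paper. The paper does not argue directly. It identifies \(\mathcal P_{\exists}\), after a reindexing of the coordinate oracle, with the \(m=1\) member \(\mathcal P_1^{\mathrm{mat}}\) of the Cantor-matrix source family of \cite[Sec.~9]{Sor26}, and imports \(\SCIG(\mathcal P_1^{\mathrm{mat}})=1\) from \cite[Thm.~9.7]{Sor26}. You instead verify both bounds using only \cref{def:GenAlgoSCI}. The partial-maximum tower \(\Gamma_n(a)=\max\{a_1,\dots,a_n\}\) gives the upper bound. The adversary \(e_{N+1}\), chosen against the finite query set at \(\mathbf 0\), rules out height zero; this is the same indistinguishability argument the paper itself uses with the bump function in \cref{prop:interval-integration-source}. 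Your route buys self-containedness: it makes \cref{sec:strictness-source-family} genuinely elementary, as that section advertises, and it avoids the unelaborated claim that a reindexed copy has the same SCI. Made precise, that claim would need two-sided finite-query reductions together with \cref{thm:SCI-monotonicity-fq-reduction}. The paper's route buys context: it places \(\mathcal P_{\exists}\) as the bottom rung of a canonical family of exact height \(m\) for every \(m\). For \(m\ge 2\) the lower bounds there must defeat entire towers rather than a single algorithm, so a one-step adversary like yours does not reach them.
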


\begin{proof}
This is the \(m=1\) instance of the canonical Cantor-matrix source family \(\{\mathcal P_m^{\mathrm{mat}}\}_{m\in \mathbb{N}}\) from \cite[Sec.~9]{Sor26}. Indeed: after a harmless reindexing of the coordinate oracle, \(\mathcal P_\exists\) is equivalent to \(\mathcal P_1^{\mathrm{mat}}\). Hence
\[
        \SCIG(\mathcal P_\exists)=1
\]
by \cite[Thm.~9.7]{Sor26}.
\end{proof}

\begin{definition}[A height-zero degenerate decision problem]
\label{def:height-zero-degenerate}
Let
\[
        \Omega_0:=\{\ast\},
        \qquad
        \Lambda_0:=\{c\},
        \qquad
        c(\ast):=0,
\]
and define
\[
        \Xi_0:\Omega_0\to\{0,1\},
        \qquad
        \Xi_0(\ast):=0.
\]
Set
\[
        \mathcal P_0:=(\Xi_0,\Omega_0,(\{0,1\},d_{\mathrm{disc}}),\Lambda_0).
\]
\end{definition}

\begin{lemma}\label{lem:P0-height-zero}
One has
\[
        \mathrm{SCI}_G(\mathcal P_0)=0.
\]
\end{lemma}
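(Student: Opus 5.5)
We prove $\mathrm{SCI}_G(\mathcal P_0)=0$ by exhibiting a standard raw type-$G$ tower of height $0$ directly, i.e.\ a general algorithm that outputs $\Xi_0$ exactly. Since $\SCIG$ takes values in $\mathbb N_0\cup\{\infty\}$, the value $0$ is the minimum possible, so no lower bound argument is needed.

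Define $\Gamma:\Omega_0\to\{0,1\}$ by $\Gamma(\ast):=0$ and $\Lambda_\Gamma(\ast):=\{c\}$. We then check the three requirements of \cref{def:GenAlgoSCI}(i)--(ii).
\begin{itemize}
\item $\Lambda_\Gamma(\ast)$ is a nonempty finite subset of $\Lambda_0$.
\item The consistency implication is trivial. The only pair to check is $A=B=\ast$, for which $\Gamma(B)=\Gamma(A)$ and $\Lambda_\Gamma(B)=\Lambda_\Gamma(A)$ hold tautologically.
\item $\Gamma(\ast)=0=\Xi_0(\ast)$.
\end{itemize}
Hence $(\Gamma,\Lambda_\Gamma)$ is a height-$0$ tower, and $\SCIG(\mathcal P_0)=0$.

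Before this, we confirm that $\mathcal P_0$ is a computational problem in the sense of \cref{def:SCIcompProb}. The consistency condition holds vacuously, because $\Omega_0$ is a singleton and so no two inputs have different target values.

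Alternatively, one could invoke \cref{thm:FiniteQueryColl}. Take $m=1$, $f_1=c$, and $G:\{0\}\to\{0,1\}$ with $G(0)=0$. This gives a finite-query factorisation $\Xi_0(\ast)=G(c(\ast))$, so the theorem yields the same conclusion.

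There is no real obstacle here. The only point needing care is that the query set must be nonempty, which is exactly why $\Lambda_0$ contains the constant evaluation $c$.
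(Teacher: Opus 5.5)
Your proof is correct and is essentially the paper's own argument. You use the same height-zero general algorithm $\Gamma(\ast)=0$, $\Lambda_\Gamma(\ast)=\{c\}$, whose stability condition is automatic because $\Omega_0$ is a singleton, and which satisfies $\Gamma=\Xi_0$; your alternative route via \cref{thm:FiniteQueryColl} with $G(0)=0$ is also valid but not needed.
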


\begin{proof}
Define
\[
        \Gamma_0(\ast):=0, \quad \Lambda_{\Gamma_0}(\ast):=\{c\}.
\]
Since there is only one input, the defining stability condition for a general algorithm is automatic. Also
\[
        \Gamma_0=\Xi_0.
\]
Thus \(\mathcal P_0\) is solved by a single height-\(0\) general algorithm, and hence
\[
        \mathrm{SCI}_G(\mathcal P_0)=0.
\]
\end{proof}

\begin{definition}[The strictness family]\label{def:strictness-family}
Define the two-member family
\[
        \mathcal F_{\mathrm{str}} := (\mathcal P_0, \mathcal P_{\exists}).
\]
\end{definition}

\begin{corollary}[Witness-space sharpness need not be family-pointwise exact]\label{cor:source-family-strictness}
The family \(\mathcal F_{\mathrm{str}}\) is witness-space sharp, equivalently worst-case exact, at height \(1\), but it is not family-pointwise exact at height \(1\).
\end{corollary}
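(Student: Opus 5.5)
The plan is to combine the two height computations already established and then read off each of the family-level notions from \cref{def:sharpNot}. By \cref{lem:P0-height-zero} we have \(\SCIG(\mathcal P_0)=0\), and by \cref{prop:P-exists-height-one} we have \(\SCIG(\mathcal P_\exists)=1\). Both members of \(\mathcal F_{\mathrm{str}}\) are legitimate computational problems: for \(\mathcal P_\exists\) this is \cref{lem:P-exists-well-defined}, and for \(\mathcal P_0\) the consistency condition is vacuous because \(\Omega_0\) is a singleton.

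For witness-space sharpness at height \(1\), I would first check the uniform upper bound \(\mathrm{UB}_1(\mathcal F_{\mathrm{str}})\). It holds because \(0\le 1\) and \(1\le 1\). The witness is \(i^\ast\) chosen as the index of \(\mathcal P_\exists\), which attains height exactly \(1\). Worst-case exactness at \(1\) then follows either from the direct computation \(\sup\{0,1\}=1\) or from \cref{lem:WitWorstEq}.

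For the failure of family-pointwise exactness at \(1\), note that this notion would require \(\SCIG(\mathcal P_i)=1\) for every index. It fails at the index of \(\mathcal P_0\), where \(\SCIG(\mathcal P_0)=0\neq 1\). Equivalently, \(\mathrm{LB}_1(\mathcal F_{\mathrm{str}})\) fails.

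There is no real obstacle here: all of the substantive content sits in \cref{prop:P-exists-height-one}, which rests on the cited Cantor-matrix classification. The only care needed is to treat the two-member family as indexed by \(I=\{0,1\}\), so that the quantifiers in \cref{def:sharpNot} apply verbatim.
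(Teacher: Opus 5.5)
Your proposal is correct and follows the paper's own proof essentially verbatim. Like the paper, you combine \cref{lem:P0-height-zero} and \cref{prop:P-exists-height-one} to get the uniform upper bound and the witness $\mathcal P_\exists$, pass to worst-case exactness via \cref{lem:WitWorstEq}, and note that $\SCIG(\mathcal P_0)=0\neq 1$ rules out family-pointwise exactness at height $1$.
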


\begin{proof}
By \cref{lem:P0-height-zero},
\[
        \mathrm{SCI}_G(\mathcal P_0)=0,
\]
and by \cref{prop:P-exists-height-one},
\[
        \mathrm{SCI}_G(\mathcal P_{\exists})=1.
\]
Therefore every member of \(\mathcal F_{\mathrm{str}}\) has type-G SCI at most \(1\), and one member, namely \(\mathcal P_{\exists}\), has exact type-G SCI equal to \(1\). Thus \(\mathcal F_{\mathrm{str}}\) is witness-space sharp at height \(1\). By \cref{lem:WitWorstEq}, witness-space sharpness is equivalent to worst-case exactness, so \(\mathcal F_{\mathrm{str}}\) is also worst-case exact at height \(1\).

However,
\[
        \mathrm{SCI}_G(\mathcal P_0)=0\ne1.
\]
Thus not every member of \(\mathcal F_{\mathrm{str}}\) has exact height \(1\), so the family is not family-pointwise exact at height \(1\).
\end{proof}

\begin{corollary}[No generic upgrade theorem]\label{cor:no-generic-upgrade}
There is no theorem, valid for arbitrary nonempty families of type-G SCI problems, of the schematic form
\[
        \text{witness-space sharpness at }k
        \quad\Longrightarrow\quad
        \text{family-pointwise exactness at }k.
\]
Equivalently, a universal upper bound together with one exact witness does not, by itself, imply a uniform lower bound on every family member.
\end{corollary}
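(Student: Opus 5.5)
The plan is to read \cref{cor:no-generic-upgrade} as a universally quantified claim over all nonempty families and all heights, and to refute it with a single counterexample. Concretely, suppose a theorem of the displayed schematic form held for every nonempty family $\mathcal F=(\mathcal P_i)_{i\in I}$ of type-G SCI problems and every $k\in\N_0$. It is enough to exhibit one family and one value of $k$ for which the hypothesis holds and the conclusion fails. I will take $k=1$ and the strictness family $\mathcal F_{\mathrm{str}}=(\mathcal P_0,\mathcal P_{\exists})$ from \cref{def:strictness-family}.

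The key steps are as follows. First, both members must be legitimate SCI computational problems. For $\mathcal P_{\exists}$ this is \cref{lem:P-exists-well-defined}. For $\mathcal P_0$ the consistency condition is vacuous, since $\Omega_0$ has only one element. Second, \cref{cor:source-family-strictness} already states that $\mathcal F_{\mathrm{str}}$ is witness-space sharp at height $1$ but not family-pointwise exact at height $1$. Instantiating the hypothetical upgrade theorem at $\mathcal F_{\mathrm{str}}$ and $k=1$ would therefore force $\SCIG(\mathcal P_0)=1$. This contradicts \cref{lem:P0-height-zero}, which gives $\SCIG(\mathcal P_0)=0$.

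For the ``equivalently'' clause, I will unpack the definitions. By \cref{def:sharpNot}, witness-space sharpness is exactly $\mathrm{UB}_k(\mathcal F)$ together with one member satisfying $\SCIG(\mathcal P_{i^\ast})=k$. Family-pointwise exactness is $\mathrm{UB}_k(\mathcal F)\wedge\mathrm{LB}_k(\mathcal F)$. Since the upper bound appears on both sides, the implication is equivalent to the statement that the upper bound plus one exact witness yields $\mathrm{LB}_k(\mathcal F)$. The same counterexample refutes this form, because $\mathrm{LB}_1(\mathcal F_{\mathrm{str}})$ fails at $\mathcal P_0$.

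I expect no real obstacle here: the argument is a direct logical consequence of \cref{cor:source-family-strictness}. The only substantive input is the lower bound $\SCIG(\mathcal P_{\exists})\ge 1$, which comes from \cref{prop:P-exists-height-one} via the companion paper. The one point that needs care is the precise meaning of ``theorem of the schematic form''. I will interpret it as the universal implication quantified over all nonempty families and all $k\in\N_0$, so that a single instance suffices to falsify it.
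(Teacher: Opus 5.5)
Your proposal is correct and takes essentially the same route as the paper: both refute the schematic implication by instantiating it at $k=1$ on $\mathcal F_{\mathrm{str}}=(\mathcal P_0,\mathcal P_{\exists})$ and appealing to \cref{cor:source-family-strictness}. Your unpacking of the ``equivalently'' clause via $\mathrm{UB}_k$ and $\mathrm{LB}_k$ is a harmless elaboration that the paper leaves implicit.
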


\begin{proof}
The family \(\mathcal F_{\mathrm{str}}\) from \cref{def:strictness-family} is witness-space sharp at height \(1\) by \cref{cor:source-family-strictness}, but it is not family-pointwise exact at height \(1\).  Hence the implication fails already for \(k=1\).
\end{proof}

A natural next question is now: when do we actually have equivalence of witness-space sharpness and family-pointwise exactness? \Cref{sec:AbstrUpgrTh} develops abstract sufficient criteria and their decoder-regular transport refinement, \cref{sec:interinteupgrade} gives two natural positive families, and \cref{sec:LattTranspQuot} analyzes the transport-degree structure for the two natural decoder classes \(\mathscr R_{\mathrm{cont}}\) and \(\mathscr R_{\mathrm{Bor}}\).

\section{Abstract Upgrade Theorems}{\label{sec:AbstrUpgrTh}}
We start by explaining and proving sufficient abstract premises that one needs to ensure this equivalence. The idea is the following:
Suppose there is a fixed source problem $\mathcal S_k$ of exact height $k$. If every hypothetical height-$(k-1)$ solution of a family member $\mathcal P_i$
can be transported back to a height-$(k-1)$ solution of $\mathcal S_k$, then no family member can in fact have height at most $k-1$.
The next definitions isolate this mechanism abstractly, and the finite-query reduction criterion below gives one concrete way to realize it.

\begin{definition}[Height-\((k-1)\) pullback property relative to a source problem]\label{def:pullback-property}
Fix \(k\in\mathbb N\). Let
\[
\mathcal S_k=(\Psi_k,\Sigma_k,(\mathcal N_k,\rho_k),\Lambda_k)
\]
be an SCI computational problem, and let
\[
\mathcal F=(\mathcal P_i)_{i\in I}, \quad
\mathcal P_i=(\Xi_i,\Omega_i,(\mathcal M_i,d_i),\Lambda_i).
\]
We say that $\mathcal F$ has the height-$(k-1)$ pullback property relative to $\mathcal S_k$
if for every $i\in I$ there exists a map
\[
\mathfrak P_i:
\{\text{height-}(k-1)\text{ type-}G\text{ towers on } \mathcal P_i\} \to
\{\text{height-}(k-1)\text{ type-}G\text{ towers on } \mathcal S_k\}
\]
which assigns to each height-$(k-1)$ type-$G$ tower on $\mathcal P_i$ a height-$(k-1)$ type-$G$ tower on $\mathcal S_k$.

In particular, whenever some $\mathcal P_i$ admits a height-$(k-1)$ type-$G$ tower, the source problem $\mathcal S_k$ also admits one.
\end{definition}

\begin{definition}[Abstract hypotheses \((H1_k)\)-\((H3_k)\)]\label{def:abstract-Hk}
Fix \(k\in\mathbb N\) and let \(\mathcal F=(\mathcal P_i)_{i\in I}\) be a family of SCI computational problems.
We say that \(\mathcal F\) satisfies the \textit{abstract level-\(k\) hypotheses}
if there exists a fixed source problem
\[
\mathcal S_k=(\Psi_k,\Sigma_k,(\mathcal N_k,\rho_k),\Lambda_k)
\]
such that
\begin{enumerate}[label=(H\arabic*$_k$)]
	\item \label{H1-abstract}
	\(\SCIG(\mathcal S_k)=k\);
	
	\item \label{H2-abstract}
	\(\mathcal F\) has the height-\((k-1)\) pullback property relative to \(\mathcal S_k\)
	in the sense of \cref{def:pullback-property};
	
	\item \label{H3-abstract}
	\(\mathrm{UB}_k(\mathcal F)\) holds, i.e.
	\[
	\forall i\in I,\qquad \SCIG(\mathcal P_i)\le k.
	\]
\end{enumerate}
\end{definition}

\begin{theorem}[Abstract lower-bound transfer]\label{thm:abstract-lower-bound-transfer}
Assume \((H1_k)\) and \((H2_k)\) from \cref{def:abstract-Hk}.
Then
\[
\mathrm{LB}_k(\mathcal F) \quad\text{holds, i.e.}\quad
\forall i\in I,\ \SCIG(\mathcal P_i)\ge k.
\]
\end{theorem}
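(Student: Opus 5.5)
The plan is a short argument by contradiction, with one subtlety: towers of height below $k-1$ must first be converted into towers of height exactly $k-1$, because \ref{H2-abstract} only pulls back height-$(k-1)$ towers. Fix $i\in I$ and suppose $\SCIG(\mathcal P_i)=j\le k-1$. By \cref{def:GenAlgoSCI}(iv), $\mathcal P_i$ then has a standard raw type-$G$ tower of height $j$. The aim is to show that $\mathcal P_i$ also has a tower of height exactly $k-1$.

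For this I would prove a padding step: if a problem has a standard raw type-$G$ tower of height $j$, it has one of every height $j'\ge j$.

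\begin{itemize}
\item \emph{Case $j\ge 1$.} Given $\Gamma_{n_j,\dots,n_1}$, define
\[
\widetilde\Gamma_{n_{j'},\dots,n_1}:=\Gamma_{n_{j'},\dots,n_{j'-j+1}},
\]
keeping the same query maps. Each member is still a general algorithm. The innermost $j'-j$ limits are limits of constant sequences, so the iterated limit is unchanged and equals $\Xi$.
\item \emph{Case $j=0$.} There is a single general algorithm $\Gamma$ with $\Gamma=\Xi$. Set $\widetilde\Gamma_{n_{j'},\dots,n_1}:=\Gamma$ for all indices, again with constant limits.
\end{itemize}

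In either case we obtain a height-$(k-1)$ tower on $\mathcal P_i$. The case $k-1=0$ is trivial, since then $j=0=k-1$ and no padding is needed.

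Next, apply the map $\mathfrak P_i$ from \ref{H2-abstract} (see \cref{def:pullback-property}) to this tower. The result is a height-$(k-1)$ type-$G$ tower on $\mathcal S_k$, so $\SCIG(\mathcal S_k)\le k-1$. This contradicts \ref{H1-abstract}, which says $\SCIG(\mathcal S_k)=k$. Hence $\SCIG(\mathcal P_i)\ge k$. Since $i$ was arbitrary, $\mathrm{LB}_k(\mathcal F)$ holds. The value $\infty$ needs no separate treatment, because it already satisfies $\ge k$.

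The only point needing care is the padding step. One must check that the constant-in-extra-indices family still consists of general algorithms; this holds because each member literally is one of the original general algorithms. One must also check the placement of the dummy indices. Putting them innermost, so that $n_1,\dots,n_{j'-j}$ are ignored, makes the first $j'-j$ limits trivial and preserves the original nesting order of the remaining limits.

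An alternative would be to read \cref{def:pullback-property} as applying to towers of height at most $k-1$. I would avoid that reading and use the padding argument instead, so the proof relies only on the definition as stated.
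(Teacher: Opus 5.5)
Your proof is correct and follows the same contradiction argument as the paper: a height-$(k-1)$ tower on $\mathcal P_i$ is pulled back via $(H2_k)$ to a height-$(k-1)$ tower on $\mathcal S_k$, which contradicts $(H1_k)$. The one difference is that you explicitly justify the passage from $\SCIG(\mathcal P_i)\le k-1$ to the existence of a tower of height exactly $k-1$; your padding by dummy innermost indices (or by a constant family when $j=0$) is valid and fills a small step that the paper covers only by the phrase ``by definition of $\SCIG$''.
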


\begin{proof}
Fix \(i\in I\).
Assume, towards a contradiction, that \(\SCIG(\mathcal P_i)\le k-1\).
Then by definition of \(\SCIG\) there exists a height-\((k-1)\) type-\(G\) tower computing \(\Xi_i\) on \(\mathcal P_i\).
By \((H2_k)\), this tower pulls back to a height-\((k-1)\) type-\(G\) tower on \(\mathcal S_k\).
Hence
\[
\SCIG(S_k)\le k-1,
\]
contradicting \((H1_k)\), namely \(\SCIG(\mathcal S_k)=k\). Therefore \(\SCIG(\mathcal P_i)\ge k\).
Since \(i\in I\) was arbitrary, \(\mathrm{LB}_k(\mathcal F)\) holds.
\end{proof}

\begin{corollary}[Abstract sufficiency for family-pointwise exactness]\label{cor:abstract-sufficiency-pointwise-exact}
Assume \((H1_k)\), \((H2_k)\), and \((H3_k)\) from \cref{def:abstract-Hk}.
Then
\[
\forall i\in I,\qquad \SCIG(\mathcal P_i)=k.
\]
In other words, \(\mathcal F\) is family-pointwise exact at \(k\).
\end{corollary}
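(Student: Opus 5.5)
The corollary follows by combining the lower bound of \cref{thm:abstract-lower-bound-transfer} with the upper bound \((H3_k)\). The plan is therefore a two-line squeeze argument carried out member by member.

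Fix an arbitrary index \(i\in I\). By \((H1_k)\) and \((H2_k)\), \cref{thm:abstract-lower-bound-transfer} applies and yields \(\mathrm{LB}_k(\mathcal F)\); in particular \(\SCIG(\mathcal P_i)\ge k\). Independently, \((H3_k)\) states \(\mathrm{UB}_k(\mathcal F)\), so \(\SCIG(\mathcal P_i)\le k\). Since \(\SCIG\) takes values in the totally ordered set \(\N_0\cup\{\infty\}\), these two inequalities force \(\SCIG(\mathcal P_i)=k\). Note that the upper bound rules out the value \(\infty\), so \(k\in\N\) is indeed attained.

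As \(i\) was arbitrary, every member has exact height \(k\). By \cref{def:sharpNot}(ii), this is precisely family-pointwise exactness at \(k\), which is equivalently \(\mathrm{UB}_k(\mathcal F)\wedge\mathrm{LB}_k(\mathcal F)\). If desired, one can add that the family is then also witness-space sharp at \(k\) by \cref{prop:LogRelNot}, using nonemptiness of \(I\) to supply a witness \(i^\ast\).

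There is no genuine obstacle here: all substantive work sits in \cref{thm:abstract-lower-bound-transfer}. The only point requiring a moment's care is that the theorem needs only \((H1_k)\) and \((H2_k)\), while \((H3_k)\) enters solely to supply the matching upper bound.
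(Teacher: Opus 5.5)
Your proposal is correct and is the same squeeze argument the paper uses. You take \(\mathrm{LB}_k(\mathcal F)\) from \cref{thm:abstract-lower-bound-transfer} under \((H1_k)\) and \((H2_k)\), combine it with \(\mathrm{UB}_k(\mathcal F)\) from \((H3_k)\), and conclude \(\SCIG(\mathcal P_i)=k\) for every \(i\in I\).
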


\begin{proof}
By \cref{thm:abstract-lower-bound-transfer}, \((H1_k)\) and \((H2_k)\) imply
\(\mathrm{LB}_k(\mathcal F)\).
By \((H3_k)\), we already have \(\mathrm{UB}_k(\mathcal F)\).
Hence
\[
\forall i\in I,\qquad k\le \SCIG(\mathcal P_i)\le k,
\]
so \(\SCIG(\mathcal P_i)=k\) for all \(i\in I\).
\end{proof}

\begin{corollary}[Equivalence of the three family-level sharpness notions under \((H1_k)\) and \((H2_k)\)]\label{cor:equivalence-under-pullback}
Assume \((H1_k)\) and \((H2_k)\) from \cref{def:abstract-Hk}.
Then the following are equivalent
\begin{enumerate}[label=(\roman*)]
	\item \(\mathcal F\) is family-pointwise exact at \(k\);
	\item \(\mathcal F\) is witness-space sharp at \(k\);
	\item \(\mathcal F\) is worst-case exact at \(k\).
\end{enumerate}
If moreover \((H3_k)\) holds, then all three properties hold.
\end{corollary}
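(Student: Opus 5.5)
The plan is to combine the lower bound from \cref{thm:abstract-lower-bound-transfer} with the already proved logical relations in \cref{prop:LogRelNot} and \cref{lem:WitWorstEq}. The implication (i)\(\Rightarrow\)(ii) holds unconditionally by \cref{prop:LogRelNot}. The equivalence (ii)\(\Leftrightarrow\)(iii) is exactly \cref{lem:WitWorstEq}. So the only new content is (ii)\(\Rightarrow\)(i), and there the hypotheses \((H1_k)\) and \((H2_k)\) are used.

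For (ii)\(\Rightarrow\)(i), assume \(\mathcal F\) is witness-space sharp at \(k\). By \cref{def:sharpNot}(iii), this gives \(\mathrm{UB}_k(\mathcal F)\), i.e. \(\SCIG(\mathcal P_i)\le k\) for all \(i\in I\); the existence of the witness \(i^\ast\) is not even needed. Independently, \((H1_k)\) and \((H2_k)\) give \(\mathrm{LB}_k(\mathcal F)\) by \cref{thm:abstract-lower-bound-transfer}. Combining both bounds yields \(\SCIG(\mathcal P_i)=k\) for every \(i\), which is family-pointwise exactness. Equivalently, witness-space sharpness supplies \((H3_k)\), and then \cref{cor:abstract-sufficiency-pointwise-exact} applies directly.

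For the final assertion, if \((H3_k)\) also holds, then \cref{cor:abstract-sufficiency-pointwise-exact} gives (i) outright. The equivalences just established then give (ii) and (iii). Note that \(k\in\mathbb N\) here, as in \cref{def:abstract-Hk}, so that \(k-1\) makes sense, and \(k\in\N_0\) as required by \cref{def:sharpNot}.

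There is no substantive obstacle. The only point needing care is to notice that (ii) already contains \(\mathrm{UB}_k\), so the upgrade reduces to the lower-bound transfer. One should also note that nonemptiness of \(I\), a standing assumption, is what makes (i) imply the existence of a witness in (ii).
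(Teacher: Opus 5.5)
Your proposal is correct and follows essentially the same route as the paper. It takes the unconditional chain (i)$\Rightarrow$(ii)$\Leftrightarrow$(iii) from \cref{lem:WitWorstEq} and \cref{prop:LogRelNot}, combines $\mathrm{LB}_k$ from \cref{thm:abstract-lower-bound-transfer} with the upper bound already contained in (ii), and uses \cref{cor:abstract-sufficiency-pointwise-exact} for the final sentence; the paper additionally argues (iii)$\Rightarrow$(i) directly via the supremum, which your route through (ii)$\Leftrightarrow$(iii) makes unnecessary.
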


\begin{proof}
The chain
\[
\text{family-pointwise exactness at }k \Longrightarrow
\text{witness-space sharpness at }k \Longleftrightarrow
\text{worst-case exactness at }k
\]
is already proven within \cref{lem:WitWorstEq} and \cref{prop:LogRelNot}.

It remains to show that (ii) or (iii) imply (i) under \((H1_k)\) and \((H2_k)\).
By \cref{thm:abstract-lower-bound-transfer}, \((H1_k)\) and \((H2_k)\) give
\[
\forall i\in I,\qquad \SCIG(\mathcal P_i)\ge k.
\]

If (ii) holds, then by definition of witness-space sharpness we also have
\[
\forall i\in I,\qquad \SCIG(\mathcal P_i)\le k.
\]
Hence \(\SCIG(\mathcal P_i)=k\) for all \(i\), i.e. (i) holds.

If (iii) holds, then
\[
\sup_{i\in I}\SCIG(\mathcal P_i)=k.
\]
Since every \(\SCIG(\mathcal P_i)\ge k\), the supremum can equal \(k\) only if
\[
\SCIG(\mathcal P_i)=k\qquad\text{for every }i\in I.
\]
Thus (i) holds again. The final sentence follows from \cref{cor:abstract-sufficiency-pointwise-exact}.
\end{proof}

\begin{remark}[What the theorem does \textit{not} claim]\label{rem:not-necessary}
The hypotheses above are sufficient, \textit{not} necessary.
Nothing in \cref{thm:abstract-lower-bound-transfer} or \cref{cor:concrete-sufficiency-package} asserts that every family-pointwise exactness theorem must arise from such a common source problem or from a finite-query evaluation reduction.
The open problem is precisely to identify natural \textit{necessary and sufficient} criteria on a family \(\mathcal F\) for the implication
\[
\text{witness-space sharpness at }k \Longrightarrow
\text{family-pointwise exactness at }k.
\]
\end{remark}

\begin{remark}[What fails in the strictness family]\label{rem:what-fails-strictness-family}
The strictness family
\[
        \mathcal F_{\mathrm{str}}=(\mathcal P_0, \mathcal P_{\exists})
\]
from \cref{def:strictness-family} shows why some extra hypothesis is necessary in any upgrade theorem from witness-space sharpness to family-pointwise exactness. Consequently, there cannot exist any source
problem \(\mathcal S_1\) with
\[
        \mathrm{SCI}_G(\mathcal S_1)=1
\]
such that the height-\(0\) pullback property from \cref{def:pullback-property} holds for every member of \(\mathcal F_{\mathrm{str}}\).
\end{remark}

The previous discussion shows that witness-space sharpness alone does not supply the family-wide pullback mechanism. The next step is therefore to isolate a concrete transport condition strong enough to pull lower bounds back from every family member to a common source problem. In the exact-input setting, finite-query evaluation reduction is the most direct such mechanism.

\begin{definition}[Finite-query evaluation reduction]\label{def:fq-eval-reduction}
Let
\[
\mathcal S=(\Psi,\Sigma,(\mathcal N,\rho),\Lambda_{\mathcal S}), \quad
\mathcal P=(\Xi,\Omega,(\mathcal M,d),\Lambda_{\mathcal P})
\]
be SCI computational problems.
We write
\[
\mathcal S\leq_{G,\mathrm{fq}} \mathcal P
\]
and say that \textit{\(\mathcal S\) finitely evaluation-reduces to \(\mathcal P\)} if there exists
\begin{enumerate}[label=(\alph*)]
	\item an encoding map
	\[
	E:\Sigma\to\Omega;
	\]
	
	\item a continuous decoding map
	\[
	D:(\mathcal M,d)\to (\mathcal N,\rho);
	\]
	
	\item for every \(f\in\Lambda_{\mathcal P}\), a natural number \(m_f\in \mathbb{N}\), source evaluations
	\[
	\gamma_{f,1},\dots,\gamma_{f,m_f}\in \Lambda_{\mathcal S},
	\]
	and a map
	\[
	\vartheta_f:\operatorname{im}(\gamma_{f,1},\dots,\gamma_{f,m_f})\to\mathbb C
	\]
	such that for every \(A\in\Sigma\),
	\[
	\Psi(A)=D(\Xi(E(A)))
	\]
	and
	\[
	f(E(A)) =
	\vartheta_f\bigl(\gamma_{f,1}(A),\dots,\gamma_{f,m_f}(A)\bigr).
	\]
\end{enumerate}
\end{definition}

\begin{lemma}[Pullback of a general algorithm along a finite-query evaluation reduction]\label{lem:pullback-general-algorithm}
Assume \(\mathcal S\leq_{G,\mathrm{fq}} \mathcal P\) as in \cref{def:fq-eval-reduction}.
Let \((\Gamma_{\mathcal P},\Lambda_{\Gamma_{\mathcal P}})\) be a general algorithm on \(\mathcal P\) in the sense of \cref{def:GenAlgoSCI}.
Define
\[
\Gamma_{\mathcal S}(A):=D\bigl(\Gamma_{\mathcal P}(E(A))\bigr),
\]
and
\[
\Lambda_{\Gamma_{\mathcal S}}(A) :=
\bigcup_{f\in \Lambda_{\Gamma_{\mathcal P}}(E(A))}
\{\gamma_{f,1},\dots,\gamma_{f,m_f}\}.
\]
Then \((\Gamma_{\mathcal S},\Lambda_{\Gamma_{\mathcal S}})\) is a general algorithm on \(\mathcal S\).
\end{lemma}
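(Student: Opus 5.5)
The plan is to verify the three requirements in \cref{def:GenAlgoSCI}(i) directly for the pair \((\Gamma_{\mathcal S},\Lambda_{\Gamma_{\mathcal S}})\).

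\textbf{Well-definedness.} First, \(\Gamma_{\mathcal S}\) maps \(\Sigma\) to \(\mathcal N\), since \(E:\Sigma\to\Omega\), \(\Gamma_{\mathcal P}:\Omega\to\mathcal M\) and \(D:\mathcal M\to\mathcal N\). Second, \(\Lambda_{\Gamma_{\mathcal S}}(A)\) is a finite nonempty subset of \(\Lambda_{\mathcal S}\). It is a finite union, because \(\Lambda_{\Gamma_{\mathcal P}}(E(A))\) is finite, of finite sets \(\{\gamma_{f,1},\dots,\gamma_{f,m_f}\}\subseteq\Lambda_{\mathcal S}\). It is nonempty because \(\Lambda_{\Gamma_{\mathcal P}}(E(A))\neq\varnothing\) and every \(m_f\ge 1\), since \(m_f\in\mathbb N\).

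\textbf{Consistency condition.} Let \(A,B\in\Sigma\) and assume \(\gamma(B)=\gamma(A)\) for all \(\gamma\in\Lambda_{\Gamma_{\mathcal S}}(A)\).
\begin{enumerate}
\item Fix \(f\in\Lambda_{\Gamma_{\mathcal P}}(E(A))\). All \(\gamma_{f,j}\) lie in \(\Lambda_{\Gamma_{\mathcal S}}(A)\), so the tuples \((\gamma_{f,1}(A),\dots,\gamma_{f,m_f}(A))\) and \((\gamma_{f,1}(B),\dots,\gamma_{f,m_f}(B))\) coincide.
\item Both tuples lie in \(\operatorname{im}(\gamma_{f,1},\dots,\gamma_{f,m_f})\), so the factorisation identity of \cref{def:fq-eval-reduction}(c) gives
\[
f(E(B))=\vartheta_f(\gamma_{f,1}(B),\dots)=\vartheta_f(\gamma_{f,1}(A),\dots)=f(E(A)).
\]
\item Step 2 holds for every \(f\in\Lambda_{\Gamma_{\mathcal P}}(E(A))\). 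The general-algorithm property of \((\Gamma_{\mathcal P},\Lambda_{\Gamma_{\mathcal P}})\), applied to the inputs \(E(A),E(B)\in\Omega\), then yields \(\Gamma_{\mathcal P}(E(B))=\Gamma_{\mathcal P}(E(A))\) and \(\Lambda_{\Gamma_{\mathcal P}}(E(B))=\Lambda_{\Gamma_{\mathcal P}}(E(A))\).
\item Applying \(D\) gives \(\Gamma_{\mathcal S}(B)=\Gamma_{\mathcal S}(A)\).
\item The sets \(\Lambda_{\Gamma_{\mathcal S}}(B)\) and \(\Lambda_{\Gamma_{\mathcal S}}(A)\) are unions over the same index set \(\Lambda_{\Gamma_{\mathcal P}}(E(B))=\Lambda_{\Gamma_{\mathcal P}}(E(A))\). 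The sets \(\{\gamma_{f,1},\dots,\gamma_{f,m_f}\}\) depend only on \(f\) and not on the input, so the two unions coincide.
\end{enumerate}

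\textbf{Main obstacle.} The only delicate point is this last one. The choice \(f\mapsto(m_f,\gamma_{f,j},\vartheta_f)\) must be fixed once per \(f\in\Lambda_{\mathcal P}\), independently of \(A\). \cref{def:fq-eval-reduction} provides exactly this, since the data are indexed by \(f\) alone, so the union in \(\Lambda_{\Gamma_{\mathcal S}}\) is input-independent once the index set agrees.

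Continuity of \(D\) is not needed here. It is only used later, when limits in a tower are pushed through \(D\).
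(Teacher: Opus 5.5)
Your proposal is correct and follows the paper's proof essentially step for step: you transfer agreement of the source queries \(\gamma_{f,j}\) to agreement of \(f(E(\cdot))\) via \(\vartheta_f\), invoke the general-algorithm property of \(\Gamma_{\mathcal P}\) at \(E(A),E(B)\), and conclude that the outputs and the query-set unions coincide. Your additional check that \(\Lambda_{\Gamma_{\mathcal S}}(A)\) is a finite nonempty subset of \(\Lambda_{\mathcal S}\) (using \(m_f\ge 1\)) fills a point the paper leaves implicit, and you are right that continuity of \(D\) plays no role in this lemma.
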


\begin{proof}
Fix \(A,B\in\Sigma\) and assume that
\[
\eta(A)=\eta(B)\qquad\text{for all }\eta\in \Lambda_{\Gamma_{\mathcal S}}(A).
\]
We show that this implies
\[
\Gamma_{\mathcal S}(A)=\Gamma_{\mathcal S}(B) \quad\text{and}\quad
\Lambda_{\Gamma_{\mathcal S}}(A)=\Lambda_{\Gamma_{\mathcal S}}(B).
\]
Take any
\[
f\in \Lambda_{\Gamma_{\mathcal P}}(E(A)).
\]
By definition of \(\Lambda_{\Gamma_{\mathcal S}}(A)\), all evaluations
\(\gamma_{f,1},\dots,\gamma_{f,m_f}\) belong to \(\Lambda_{\Gamma_{\mathcal S}}(A)\).
Hence
\[
\gamma_{f,j}(A)=\gamma_{f,j}(B)\qquad (j=1,\dots,m_f).
\]
Therefore, by the reconstruction formula in \cref{def:fq-eval-reduction},
\[
f(E(A)) =
\vartheta_f\bigl(\gamma_{f,1}(A),\dots,\gamma_{f,m_f}(A)\bigr) =
\vartheta_f\bigl(\gamma_{f,1}(B),\dots,\gamma_{f,m_f}(B)\bigr) = f(E(B)).
\]
So every query asked by \(\Gamma_{\mathcal P}\) at input \(E(A)\) takes the same value on \(E(B)\).
Since \((\Gamma_{\mathcal P},\Lambda_{\Gamma_{\mathcal P}})\) is a general algorithm on \(\mathcal P\), it follows that
\[
\Gamma_{\mathcal P}(E(A))=\Gamma_{\mathcal P}(E(B)) \quad\text{and}\quad
\Lambda_{\Gamma_{\mathcal P}}(E(A))=\Lambda_{\Gamma_{\mathcal P}}(E(B)).
\]
Applying the continuous decoder \(D\), we obtain
\[
\Gamma_{\mathcal S}(A)=D(\Gamma_{\mathcal P}(E(A)))=D(\Gamma_{\mathcal P}(E(B)))=\Gamma_{\mathcal S}(B).
\]
Moreover, because the queried set \(\Lambda_{\Gamma_P}(E(A))\) agrees with
\(\Lambda_{\Gamma_P}(E(B))\), the unions of the associated source-query blocks agree, i.e.
\[
\Lambda_{\Gamma_{\mathcal S}}(A)=\Lambda_{\Gamma_{\mathcal S}}(B).
\]
Thus \((\Gamma_{\mathcal S},\Lambda_{\Gamma_{\mathcal S}})\) satisfies the defining condition of a general
algorithm on \(\mathcal S\).
\end{proof}

\begin{theorem}[SCI monotonicity under finite-query evaluation reductions]\label{thm:SCI-monotonicity-fq-reduction}
If
\[
\mathcal S\leq_{G,\mathrm{fq}} \mathcal P,
\]
then
\[
\SCIG(\mathcal S)\le \SCIG(\mathcal P).
\]
\end{theorem}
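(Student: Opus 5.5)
The plan is to reduce to the case \(\SCIG(\mathcal P)=k<\infty\), since for \(\SCIG(\mathcal P)=\infty\) there is nothing to prove. Fix a standard raw type-\(G\) tower \(\{\Gamma_{n_k,\ldots,n_1}\}\) of height \(k\) for \(\mathcal P\); for \(k=0\) this is a single general algorithm \(\Gamma\) with \(\Gamma=\Xi\). We then pull every base-level general algorithm back along the reduction using \cref{lem:pullback-general-algorithm}. Concretely, we define
\[
\Gamma^{\mathcal S}_{n_k,\ldots,n_1}(A):=D\bigl(\Gamma_{n_k,\ldots,n_1}(E(A))\bigr),\qquad A\in\Sigma,
\]
with query sets given by the unions of the source blocks \(\{\gamma_{f,1},\dots,\gamma_{f,m_f}\}\). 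By \cref{lem:pullback-general-algorithm}, each of these is a general algorithm on \(\mathcal S\). The query sets are finite because each block is finite and \(\Lambda_{\Gamma}(E(A))\) is finite. They are nonempty because \(m_f\ge1\).

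Next we verify the limit structure. In the case \(k=0\), we have \(\Gamma^{\mathcal S}(A)=D(\Xi(E(A)))=\Psi(A)\) directly from the reduction identity, so \(\SCIG(\mathcal S)=0\).

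For \(k\ge1\), we show by induction on the number of limits taken that the iterated limits commute with \(D\). At the innermost level, for fixed \(A\) and fixed \((n_k,\dots,n_2)\), the sequence \(\Gamma_{n_k,\ldots,n_1}(E(A))\) converges in \((\mathcal M,d)\) to the semantic limit \(\Gamma_{n_k,\ldots,n_2}(E(A))\). Continuity of \(D\) then gives
\[
\Gamma^{\mathcal S}_{n_k,\ldots,n_1}(A)\to D\bigl(\Gamma_{n_k,\ldots,n_2}(E(A))\bigr)
\]
in \((\mathcal N,\rho)\). We define the source intermediate maps as \(D\circ\Gamma_{n_k,\ldots,n_j}\circ E\). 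Because the definition imposes no algorithmic requirement on intermediate maps, only their existence as pointwise limits matters. Repeating the argument \(k\) times, where each step uses only sequential continuity of \(D\) at the relevant limit point, yields
\[
\lim_{n_k}\cdots\lim_{n_1}\Gamma^{\mathcal S}_{n_k,\ldots,n_1}(A)=D(\Xi(E(A)))=\Psi(A).
\]
Hence the pulled-back family is a standard raw type-\(G\) tower of height \(k\) for \(\mathcal S\), and therefore \(\SCIG(\mathcal S)\le k\).

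The only delicate point is that the intermediate limits must genuinely exist for the source tower, not merely their images under \(D\). This is handled by defining them as \(D\)-images of the existing limits on \(\mathcal P\). Pointwise convergence on \(\Sigma\) is then inherited because \(E(A)\in\Omega\), where the original tower converges. Everything else is bookkeeping already done in \cref{lem:pullback-general-algorithm}.
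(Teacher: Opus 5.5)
Your proposal is correct and follows essentially the same route as the paper: pull each base-level algorithm back as $D\circ\Gamma\circ E$ via \cref{lem:pullback-general-algorithm}, settle height $0$ by the target identity $\Psi=D\circ\Xi\circ E$, and pass the iterated limits through the continuous decoder $D$ one limit at a time. Your explicit check that the pulled-back query sets are finite and nonempty (using $m_f\ge 1$) is a detail the paper's lemma leaves implicit, so it is a welcome addition rather than a deviation.
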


\begin{proof}
If \(\SCIG(\mathcal P)=\infty\), there is nothing to prove. Let
\[
        \SCIG(\mathcal P)=m<\infty.
\]
If \(m=0\), let \((\Gamma_\mathcal P,\Lambda_{\Gamma_\mathcal P})\) be a height-zero general algorithm on \(\mathcal P\), so that \(\Gamma_\mathcal P=\Xi\). Define
\[
        \Gamma_\mathcal S(A) := D(\Gamma_\mathcal P(E(A)))
\]
for $A\in\Sigma$. By \cref{lem:pullback-general-algorithm}, \((\Gamma_\mathcal S,\Lambda_{\Gamma_\mathcal S})\) is a general algorithm on \(\mathcal S\). Moreover, by the target identity in \cref{def:fq-eval-reduction},
\[
        \Gamma_\mathcal S(A) = D(\Xi(E(A))) =
        \Psi(A).
\]
Hence \(\SCIG(\mathcal S)=0\le \SCIG(\mathcal P)\).

Now assume \(m\ge1\). Let
\[
        \Gamma_{n_m,\ldots,n_1}:\Omega\to \mathcal M,
\]
where $(n_1,\ldots,n_m)\in\mathbb N^m$, be the deepest-level general algorithms of a standard raw type-\(G\) tower of height \(m\) for \(\mathcal P\). Thus, for every \(B\in\Omega\),
\[
        \Xi(B)= \lim_{n_m\to\infty} \lim_{n_{m-1}\to\infty} \cdots \lim_{n_1\to\infty} \Gamma_{n_m,\ldots,n_1}(B).
\]
For each multi-index \((n_m,\ldots,n_1)\), define
\[
        \widetilde\Gamma_{n_m,\ldots,n_1}(A) :=
        D\bigl(\Gamma_{n_m,\ldots,n_1}(E(A))\bigr),
\]
where $A\in\Sigma$. By \cref{lem:pullback-general-algorithm}, each deepest-level map
\[
        \widetilde\Gamma_{n_m,\ldots,n_1}
\]
is a general algorithm on \(\mathcal S\). It remains only to check the iterated limits. Fix \(A\in\Sigma\). Since \(D\) is continuous and the tower limits for \(\mathcal P\) exist, continuity of \(D\) gives,
\[
\begin{aligned}
&\lim_{n_m\to\infty}\cdots\lim_{n_1\to\infty}
        \widetilde\Gamma_{n_m,\ldots,n_1}(A)  \\
&\quad =
        \lim_{n_m\to\infty}\cdots\lim_{n_1\to\infty}
        D\bigl(\Gamma_{n_m,\ldots,n_1}(E(A))\bigr)  \\
&\quad =
        D\left(
        \lim_{n_m\to\infty}\cdots\lim_{n_1\to\infty}
        \Gamma_{n_m,\ldots,n_1}(E(A))
        \right)  \\
&\quad =
        D(\Xi(E(A))) = \Psi(A).
\end{aligned}
\]
The interchange of \(D\) with the iterated limits is justified one limit at a time by ordinary sequential continuity in metric spaces.

Thus the deepest-level maps $\widetilde\Gamma_{n_m,\ldots,n_1}$ form a standard raw type-\(G\) tower of height \(m\) for \(\mathcal S\). Therefore
\[
        \SCIG(\mathcal S)\le m=\SCIG(\mathcal P).
\]
\end{proof}

\begin{corollary}[A concrete sufficient criterion for \((H2_k)\)]\label{cor:concrete-criterion-for-H2}
Let \(\mathcal F=(\mathcal P_i)_{i\in I}\) and let \(\mathcal S_k\) be a fixed SCI computational problem.
Assume that for every \(i\in I\),
\[
\mathcal S_k\leq_{G,\mathrm{fq}} \mathcal P_i.
\]
Then \(\mathcal F\) has the height-\((k-1)\) pullback property relative to \(\mathcal S_k\).
Equivalently, the family-wide finite-query evaluation reductions imply \((H2_k)\).
\end{corollary}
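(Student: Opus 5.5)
The plan is to build the maps \(\mathfrak P_i\) required by \cref{def:pullback-property} directly from the construction in the proof of \cref{thm:SCI-monotonicity-fq-reduction}. Fix \(i\in I\). By hypothesis there is a finite-query evaluation reduction \(\mathcal S_k\leq_{G,\mathrm{fq}}\mathcal P_i\). Choose one such reduction once and for all (one choice per \(i\), by the axiom of choice if \(I\) is infinite). It consists of an encoder \(E_i:\Sigma_k\to\Omega_i\), a continuous decoder \(D_i:\mathcal M_i\to\mathcal N_k\), and, for each \(f\in\Lambda_i\), query blocks \(\gamma_{f,1},\dots,\gamma_{f,m_f}\) and reconstruction maps \(\vartheta_f\).

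Given a height-\((k-1)\) type-\(G\) tower on \(\mathcal P_i\), I define \(\mathfrak P_i\) of that tower by a case split on the height. If \(k-1=0\), the tower is a single general algorithm \((\Gamma,\Lambda_\Gamma)\) with \(\Gamma=\Xi_i\). I map it to \((D_i\circ\Gamma\circ E_i,\Lambda_{\Gamma_{\mathcal S}})\), with the query map built as in \cref{lem:pullback-general-algorithm}. That lemma shows the result is a general algorithm on \(\mathcal S_k\), and the target identity \(\Psi_k=D_i\circ\Xi_i\circ E_i\) shows it equals \(\Psi_k\). If \(k-1\ge1\), I apply the same recipe to every deepest-level map, setting \(\widetilde\Gamma_{n_{k-1},\dots,n_1}:=D_i\circ\Gamma_{n_{k-1},\dots,n_1}\circ E_i\). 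Each of these is a general algorithm on \(\mathcal S_k\) by \cref{lem:pullback-general-algorithm}. Continuity of \(D_i\), applied one limit at a time exactly as in the proof of \cref{thm:SCI-monotonicity-fq-reduction}, gives
\[
\lim_{n_{k-1}\to\infty}\cdots\lim_{n_1\to\infty}\widetilde\Gamma_{n_{k-1},\dots,n_1}(A)=D_i(\Xi_i(E_i(A)))=\Psi_k(A)
\]
for every \(A\in\Sigma_k\). So the image is a height-\((k-1)\) tower on \(\mathcal S_k\), and \(\mathfrak P_i\) is a well-defined map between the two sets of towers.

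I do not expect a genuine obstacle, since the content is already contained in \cref{lem:pullback-general-algorithm} and \cref{thm:SCI-monotonicity-fq-reduction}. The one point needing care is that \cref{def:pullback-property} asks for an actual assignment on towers of height exactly \(k-1\), not just the inequality \(\SCIG(\mathcal S_k)\le\SCIG(\mathcal P_i)\). That is why I extract the explicit construction rather than citing the theorem's conclusion. It also fixes the choices: \(D_i\) and \(E_i\) are chosen per \(i\), independently of the tower, so the assignment is functional.

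For the ``equivalently'' clause, this construction shows that the family-wide reductions give \((H2_k)\) as defined in \cref{def:abstract-Hk}, which is exactly the pullback property just established.
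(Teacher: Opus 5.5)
Your proposal is correct and follows the paper's proof: the paper also defines $\mathfrak P_i$ by applying, tower by tower, the pullback $D\circ\Gamma\circ E$ from the proof of \cref{thm:SCI-monotonicity-fq-reduction}, with query sets taken from \cref{lem:pullback-general-algorithm}. Your version spells out what the paper leaves implicit: you fix one reduction per $i$ so that the assignment is an actual map, and you treat the height-zero case separately.
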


\begin{proof}
Fix $i\in I$. Given a height-$(k-1)$ type-$G$ tower on $\mathcal P_i$, apply the construction from \cref{thm:SCI-monotonicity-fq-reduction} to obtain its pulled-back
height-$(k-1)$ tower on $\mathcal S_k$. This defines the map $\mathfrak P_i$ required in \cref{def:pullback-property}.
Since $i$ was arbitrary, $\mathcal F$ has the height-$(k-1)$ pullback property relative to $\mathcal S_k$.
\end{proof}

\begin{corollary}[Concrete sufficiency package]\label{cor:concrete-sufficiency-package}
Fix \(k\in\mathbb N\). Let \(\mathcal F=(\mathcal P_i)_{i\in I}\) be a family of SCI computational problems.
Assume there exists a fixed source problem \(\mathcal S_k\) such that
\begin{enumerate}[label=(C\arabic*)]
	\item \(\SCIG(\mathcal S_k)=k\);
	\item for every \(i\in I\), one has \(\mathcal S_k\leq_{G,\mathrm{fq}} \mathcal P_i\);
	\item \(\mathrm{UB}_k(\mathcal F)\) holds.
\end{enumerate}
Then \(\mathcal F\) is family-pointwise exact at \(k\). Moreover, under (C1)-(C2), the three family-level sharpness notions are equivalent, i.e. family-pointwise exactness at \(k\), witness-space sharpness at \(k\), and worst-case exactness at \(k\).
\end{corollary}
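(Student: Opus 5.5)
The plan is to reduce the statement to the abstract results already established, with \(\mathcal S_k\) as the common source problem. First, (C1) is literally \((H1_k)\) from \cref{def:abstract-Hk}, and (C3) is literally \((H3_k)\).

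Second, (C2) asserts \(\mathcal S_k\leq_{G,\mathrm{fq}}\mathcal P_i\) for every \(i\in I\). This is exactly the hypothesis of \cref{cor:concrete-criterion-for-H2}, which yields the height-\((k-1)\) pullback property of \(\mathcal F\) relative to \(\mathcal S_k\). That property is \((H2_k)\). Thus (C1)–(C3) imply \((H1_k)\)–\((H3_k)\).

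Third, we apply \cref{cor:abstract-sufficiency-pointwise-exact} to get \(\SCIG(\mathcal P_i)=k\) for all \(i\), which is family-pointwise exactness. For the \lq\lq moreover\rq\rq\ clause, which assumes only (C1)–(C2), we get \((H1_k)\) and \((H2_k)\) as above. We then invoke \cref{cor:equivalence-under-pullback} to obtain the equivalence of the three notions.

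As a cross-check, one can bypass the pullback formalism entirely. By \cref{thm:SCI-monotonicity-fq-reduction}, (C2) gives \(k=\SCIG(\mathcal S_k)\le\SCIG(\mathcal P_i)\) for every \(i\), i.e.\ \(\mathrm{LB}_k(\mathcal F)\). Combining this with (C3) gives exactness. Likewise, \(\mathrm{LB}_k(\mathcal F)\) together with either witness-space sharpness or worst-case exactness forces every value to equal \(k\).

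No step is a real obstacle. The only point needing care is the edge case \(k\ge1\) (note \(k\in\mathbb N\) here), which is needed so that height-\((k-1)\) towers make sense in \cref{def:pullback-property}. The proof of \cref{thm:SCI-monotonicity-fq-reduction} already handles both the height-zero and the positive-height cases of the pulled-back tower.
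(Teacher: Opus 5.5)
Your proposal is correct and follows the paper's proof: (C2) gives \((H2_k)\) via \cref{cor:concrete-criterion-for-H2}, and then \cref{cor:abstract-sufficiency-pointwise-exact} and \cref{cor:equivalence-under-pullback} yield both conclusions. Your cross-check via \cref{thm:SCI-monotonicity-fq-reduction} is also valid, and it is the same direct shortcut the paper later uses in \cref{cor:transport-saturation-suffices}.
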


\begin{proof}
By (C2) and \cref{cor:concrete-criterion-for-H2}, \((H2_k)\) holds. Now apply \cref{cor:abstract-sufficiency-pointwise-exact} and \cref{cor:equivalence-under-pullback}.
\end{proof}

\subsection{A Decoder-Regular Finite-Query Transport Preorder}\label{sec:decoder-regular-transport}

\begin{definition}[Admissible decoder class]\label{def:admissible-decoder-class}
A class \(\mathscr R\) of maps between metric spaces is called \textit{admissible} if:
\begin{enumerate}
	\item for every metric space \((\mathcal M,d)\), the identity map
	\[
	\mathrm{id}_{\mathcal M}:(\mathcal M,d)\to (\mathcal M,d)
	\]
	belongs to \(\mathscr R\);
	\item whenever
	\[
	f:(\mathcal M,d)\to (\mathcal N,\rho), \quad
	g:(\mathcal N,\rho)\to (\mathcal L,\lambda)
	\]
	belong to \(\mathscr R\), then the composition
	\[
	g\circ f:(\mathcal M,d)\to (\mathcal L,\lambda)
	\]
	also belongs to \(\mathscr R\).
\end{enumerate}

The two basic examples are
\[
\mathscr R_{\mathrm{cont}} :=
\{\text{continuous maps between metric spaces}\}, \quad \mathscr R_{\mathrm{Bor}} :=
\{\text{Borel measurable maps between metric spaces}\}.
\]
\end{definition}

\begin{definition}[Decoder-regular finite-query transport]\label{def:decoder-regular-fq-transport}
Let
\[
\mathcal Q=(\Psi,\Sigma,(\mathcal N,\rho),\Gamma), \quad
\mathcal P=(\Xi,\Omega,(\mathcal M,d),\Lambda)
\]
be SCI computational problems, and let \(\mathscr R\) be an admissible decoder class. We write
\[
\mathcal Q \leq^{\mathscr R}_{\mathrm{fq}} \mathcal P
\]
and say that \(\mathcal Q\) \textit{\(\mathscr R\)-decoder finitely transports to \(\mathcal P\)} if there exists
\begin{enumerate}
	\item an encoding map
	\[
	E:\Sigma\to\Omega;
	\]
	\item a decoder
	\[
	D:(\mathcal M,d)\to (\mathcal N,\rho)
	\]
	with \(D\in \mathscr R\);
	\item for every \(f\in\Lambda\), a natural number \(m_f\in\mathbb N\), source evaluations
	\[
	\gamma_{f,1},\dots,\gamma_{f,m_f}\in\Gamma,
	\]
	and a map
	\[
	\vartheta_f:\operatorname{im}(\gamma_{f,1},\dots,\gamma_{f,m_f})\to\mathbb C
	\]
	such that for every \(A\in\Sigma\),
	\[
	\Psi(A)=D(\Xi(E(A)))
	\]
	and
	\[
	f(E(A)) =
	\vartheta_f\bigl(\gamma_{f,1}(A),\dots,\gamma_{f,m_f}(A)\bigr).
	\]
\end{enumerate}
\end{definition}

\begin{proposition}[The relation \(\leq^{\mathscr R}_{\mathrm{fq}}\) is a preorder]\label{prop:decoder-regular-fq-preorder}
For every admissible decoder class \(\mathscr R\), the relation $\leq^{\mathscr R}_{\mathrm{fq}}$ on SCI computational problems is reflexive and transitive.
\end{proposition}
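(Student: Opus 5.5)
Reflexivity and transitivity are checked separately, each by writing down explicit data $(E,D,\gamma,\vartheta)$ as required in \cref{def:decoder-regular-fq-transport}. The two axioms of an admissible decoder class (\cref{def:admissible-decoder-class}) are exactly what is needed for the decoder component.

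\textbf{Reflexivity.} For $\mathcal P=(\Xi,\Omega,(\mathcal M,d),\Lambda)$ take $E=\mathrm{id}_\Omega$ and $D=\mathrm{id}_{\mathcal M}$. The latter lies in $\mathscr R$ by axiom (1). For each $f\in\Lambda$ take $m_f=1$, $\gamma_{f,1}=f$ and $\vartheta_f=\mathrm{id}$ on $\operatorname{im}(f)$. Then $\Xi(A)=D(\Xi(E(A)))$ and $f(E(A))=\vartheta_f(f(A))$ hold trivially.

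\textbf{Transitivity.} Suppose $\mathcal Q_1\leq^{\mathscr R}_{\mathrm{fq}}\mathcal Q_2$ via $(E_1,D_1,\gamma^{(1)},\vartheta^{(1)})$ and $\mathcal Q_2\leq^{\mathscr R}_{\mathrm{fq}}\mathcal Q_3$ via $(E_2,D_2,\gamma^{(2)},\vartheta^{(2)})$. Here $\mathcal Q_j$ has target $\Psi_j$, inputs $\Sigma_j$ and evaluations $\Lambda_j$.

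\begin{enumerate}
\item Set $E:=E_2\circ E_1:\Sigma_1\to\Sigma_3$ and $D:=D_1\circ D_2$. Then $D\in\mathscr R$ by axiom (2).
\item The target identity follows from $\Psi_1(A)=D_1(\Psi_2(E_1A))=D_1(D_2(\Psi_3(E_2E_1A)))$.
\item For the queries, fix $f\in\Lambda_3$. The second reduction gives $g_j:=\gamma^{(2)}_{f,j}\in\Lambda_2$ for $j=1,\dots,m_f$, with $f(E_2 B)=\vartheta^{(2)}_f(g_1(B),\dots,g_{m_f}(B))$ for all $B\in\Sigma_2$.
\item Each $g_j$ is in turn reconstructed from finitely many source evaluations $\gamma^{(1)}_{g_j,1},\dots,\gamma^{(1)}_{g_j,m_{g_j}}\in\Lambda_1$ via $\vartheta^{(1)}_{g_j}$.
\item Concatenate these lists into one finite list of length $M_f:=\sum_j m_{g_j}\ge1$.
\item Define $\vartheta_f$ on the image of this concatenated tuple by
\[
\vartheta_f\bigl((z_{j,l})_{j,l}\bigr):=\vartheta^{(2)}_f\Bigl(\vartheta^{(1)}_{g_1}\bigl((z_{1,l})_l\bigr),\dots,\vartheta^{(1)}_{g_{m_f}}\bigl((z_{m_f,l})_l\bigr)\Bigr).
\]
Evaluating at $A\in\Sigma_1$ and applying the two reconstruction identities (the second with $B=E_1A$) gives $f(E(A))=\vartheta_f(\dots)$.
\end{enumerate}

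The only point needing care is that $\vartheta_f$ is well defined on the image of the concatenated tuple. A point of that image has the form $(\gamma^{(1)}_{g_j,l}(A))_{j,l}$ for some $A\in\Sigma_1$. Each inner block then lies in the domain of $\vartheta^{(1)}_{g_j}$ and is sent to $g_j(E_1A)$. The resulting vector $(g_j(E_1A))_j$ lies in $\operatorname{im}(g_1,\dots,g_{m_f})$, the domain of $\vartheta^{(2)}_f$.

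Two minor bookkeeping issues remain. First, repeated evaluations across blocks are harmless, since the definition allows repetition in the list. Second, the formula depends only on the tuple value, not on the choice of $A$, because it is a composition of functions. With this checked, the remainder of transitivity is routine.
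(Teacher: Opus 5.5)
Your proof is correct and follows the paper's argument essentially verbatim. Reflexivity uses the identity data, and transitivity uses the composed encoder $E_2\circ E_1$, the composed decoder $D_1\circ D_2$ (in $\mathscr R$ by closure under composition), and blockwise substitution of the inner reconstruction maps into the outer one over the concatenated query list; your explicit check that the substituted map is well defined on the image of the concatenated tuple is a detail the paper leaves implicit.
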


\begin{proof}
Let
\[
\mathcal R=(\Theta,\Pi,(\mathcal L,\lambda),\Delta), \quad
\mathcal Q=(\Psi,\Sigma,(\mathcal N,\rho),\Gamma), \quad
\mathcal P=(\Xi,\Omega,(\mathcal M,d),\Lambda)
\]
be SCI computational problems.

For reflexivity, fix \(\mathcal P\). Take
\[
E:=\mathrm{id}_\Omega, \quad D:=\mathrm{id}_{\mathcal M}.
\]
By admissibility, \(D\in\mathscr R\). For each \(f\in\Lambda\), set
\[
m_f:=1, \quad \gamma_{f,1}:=f, \quad \vartheta_f(z):=z.
\]
Then for every \(A\in\Omega\),
\[
\Xi(A)=D(\Xi(E(A)))
\]
and
\[
f(E(A)) = f(A) = \vartheta_f(\gamma_{f,1}(A)).
\]
Hence
\[
\mathcal P \leq^{\mathscr R}_{\mathrm{fq}} \mathcal P.
\]

For transitivity, assume
\[
\mathcal R \leq^{\mathscr R}_{\mathrm{fq}} \mathcal Q \quad\text{and}\quad
\mathcal Q \leq^{\mathscr R}_{\mathrm{fq}} \mathcal P.
\]
Choose witnessing data for
\[
\mathcal R \leq^{\mathscr R}_{\mathrm{fq}} \mathcal Q
\]
as follows.
\[
E_{\mathcal{RQ}}:\Pi\to\Sigma, \quad
D_{\mathcal{RQ}}:(\mathcal N,\rho)\to (\mathcal L,\lambda),
\]
and for every \(g\in\Gamma\), numbers \(m_g\in\mathbb N\), source evaluations
\[
\delta_{g,1},\dots,\delta_{g,m_g}\in\Delta,
\]
and maps
\[
\theta_g:\operatorname{im}(\delta_{g,1},\dots,\delta_{g,m_g})\to\mathbb C
\]
such that for every \(A\in\Pi\),
\[
\Theta(A)=D_{\mathcal{RQ}}(\Psi(E_{\mathcal{RQ}}(A)))
\]
and
\[
g(E_{\mathcal{RQ}}(A)) = \theta_g\bigl(\delta_{g,1}(A),\dots,\delta_{g,m_g}(A)\bigr).
\]

Choose witnessing data for
\[
\mathcal Q \leq^{\mathscr R}_{\mathrm{fq}} \mathcal P
\]
as follows.
\[
E_{\mathcal{QP}}:\Sigma\to\Omega, \quad
D_{\mathcal{QP}}:(\mathcal M,d)\to (\mathcal N,\rho),
\]
and for every \(f\in\Lambda\), numbers \(n_f\in\mathbb N\), source evaluations
\[
\gamma_{f,1},\dots,\gamma_{f,n_f}\in\Gamma,
\]
and maps
\[
\vartheta_f:\operatorname{im}(\gamma_{f,1},\dots,\gamma_{f,n_f})\to\mathbb C
\]
such that for every \(B\in\Sigma\),
\[
\Psi(B)=D_{\mathcal{QP}}(\Xi(E_{\mathcal{QP}}(B)))
\]
and
\[
f(E_{\mathcal{QP}}(B)) = \vartheta_f\bigl(\gamma_{f,1}(B),\dots,\gamma_{f,n_f}(B)\bigr).
\]

Define the composed encoding
\[
E_{\mathcal{RP}}:=E_{\mathcal{QP}}\circ E_{\mathcal{RQ}}:\Pi\to\Omega
\]
and the composed decoder
\[
D_{\mathcal{RP}}:=D_{\mathcal{RQ}}\circ D_{\mathcal{QP}}:(\mathcal M,d)\to(\mathcal L,\lambda).
\]
Since \(\mathscr R\) is admissible and both \(D_{\mathcal{RQ}}\) and \(D_{\mathcal{QP}}\) belong to \(\mathscr R\), we have
\[
D_{\mathcal{RP}}\in\mathscr R.
\]

Now fix \(f\in\Lambda\). For each \(j\in\{1,\dots,n_f\}\), the source query \(\gamma_{f,j}\in\Gamma\) can be transported along
\[
\mathcal R \leq^{\mathscr R}_{\mathrm{fq}} \mathcal Q.
\]
Thus there exist a natural number \(m_{f,j}\in\mathbb N\), source evaluations
\[
\delta_{f,j,1},\dots,\delta_{f,j,m_{f,j}}\in\Delta,
\]
and a map
\[
\theta_{f,j}: \operatorname{im}(\delta_{f,j,1},\dots,\delta_{f,j,m_{f,j}})\to\mathbb C
\]
such that for every \(A\in\Pi\),
\[
\gamma_{f,j}(E_{\mathcal{RQ}}(A)) = \theta_{f,j}\bigl(\delta_{f,j,1}(A),\dots,\delta_{f,j,m_{f,j}}(A)\bigr).
\]

Let
\[
N_f:=\sum_{j=1}^{n_f} m_{f,j}.
\]
Choose an enumeration
\[
\delta'_{f,1},\dots,\delta'_{f,N_f}
\]
of the concatenated family
\[
(\delta_{f,1,1},\dots,\delta_{f,1,m_{f,1}}, \dots, \delta_{f,n_f,1},\dots,\delta_{f,n_f,m_{f,n_f}}).
\]
Define
\[
\vartheta'_f: \operatorname{im}(\delta'_{f,1},\dots,\delta'_{f,N_f})\to\mathbb C
\]
by blockwise substitution, i.e.
\begin{align*}
	\vartheta'_f \bigl( z_{1,1},\dots,z_{1,m_{f,1}}, \dots, z_{n_f,1},\dots,z_{n_f,m_{f,n_f}} \bigr) := \\
	\vartheta_f\Bigl( \theta_{f,1}(z_{1,1},\dots,z_{1,m_{f,1}}), \dots, \theta_{f,n_f}(z_{n_f,1},\dots,z_{n_f,m_{f,n_f}}) \Bigr).
\end{align*}

Then for every \(A\in\Pi\),
\begin{align*}
	f(E_{\mathcal{RP}}(A))
	&= f(E_{\mathcal{QP}}(E_{\mathcal{RQ}}(A)))\\
	&= \vartheta_f\bigl( \gamma_{f,1}(E_{\mathcal{RQ}}(A)),\dots,\gamma_{f,n_f}(E_{\mathcal{RQ}}(A)) \bigr)\\
	&= \vartheta_f\Bigl( \theta_{f,1}(\delta_{f,1,1}(A),\dots,\delta_{f,1,m_{f,1}}(A)), \dots, \theta_{f,n_f}(\delta_{f,n_f,1}(A),\dots,\delta_{f,n_f,m_{f,n_f}}(A)) \Bigr)\\
	&= \vartheta'_f\bigl(\delta'_{f,1}(A),\dots,\delta'_{f,N_f}(A)\bigr).
\end{align*}
So every target query of \(\mathcal P\) is reconstructed from finitely many source queries of \(\mathcal R\). Finally, for every \(A\in\Pi\),
\begin{align*}
	\Theta(A)
	&= D_{\mathcal{RQ}}(\Psi(E_{\mathcal{RQ}}(A)))\\
	&= D_{\mathcal{RQ}}(D_{\mathcal{QP}}(\Xi(E_{\mathcal{QP}}(E_{\mathcal{RQ}}(A)))))\\
	&= D_{\mathcal{RP}}(\Xi(E_{\mathcal{RP}}(A))).
\end{align*}
Hence all clauses of \cref{def:decoder-regular-fq-transport} are satisfied, and therefore
\[
\mathcal R \leq^{\mathscr R}_{\mathrm{fq}} \mathcal P.
\]
In other words, \(\leq^{\mathscr R}_{\mathrm{fq}}\) is transitive.
\end{proof}

\begin{remark}[The two basic instances]\label{rem:decoder-regular-fq-instances}
For \(\mathscr R=\mathscr R_{\mathrm{cont}}\), the relation $\leq^{\mathscr R_{\mathrm{cont}}}_{\mathrm{fq}}$ is exactly the finite-query evaluation reduction \(\leq_{G,\mathrm{fq}}\) from \cref{def:fq-eval-reduction}.

The Borel-decoder instance $\leq^{\mathscr R_{\mathrm{Bor}}}_{\mathrm{fq}}$ is included because decision/threshold quotients naturally lead to decoders that are Borel measurable without being continuous.
\end{remark}

\begin{definition}[Transport saturation]\label{def:transport-saturation}
Fix \(k\in\mathbb N\), an admissible decoder class \(\mathscr R\), a nonempty family $\mathcal F=(\mathcal P_i)_{i\in I}$, and a class \(\mathcal B\) of SCI computational problems. We say that \(\mathcal F\) is \textit{\((k,\mathcal B,\mathscr R)\)-transport-saturated} if
\[
\forall i\in I\ \exists \mathcal O_i\in \mathcal B\quad \mathcal O_i \leq^{\mathscr R}_{\mathrm{fq}} \mathcal P_i.
\]
If, in addition,
\[
\forall \mathcal O\in \mathcal B\quad \SCIG(\mathcal O)=k,
\]
we call \(\mathcal B\) an \textit{exact level-\(k\) transport basis} for \(\mathcal F\).
\end{definition}

\begin{corollary}[Transport saturation suffices for family-pointwise exactness]\label{cor:transport-saturation-suffices}
Fix \(k\in\mathbb N\). Let $\mathcal F=(\mathcal P_i)_{i\in I}$ be a nonempty family of SCI computational problems. Assume that \(\mathcal B\) is an exact level-\(k\) transport basis for \(\mathcal F\) with respect to \(\mathscr R_{\mathrm{cont}}\), and assume that \(\mathsf{UB}_k(\mathcal F)\) holds. Then \(\mathcal F\) is family-pointwise exact at height \(k\).
\end{corollary}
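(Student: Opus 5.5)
The plan is to prove the lower bound pointwise for each family member separately, and then combine it with the assumed upper bound. Unlike \cref{cor:concrete-sufficiency-package}, the source problem here may depend on the index. So I will not invoke \cref{thm:abstract-lower-bound-transfer}, which uses a single fixed source. Instead I will use \cref{thm:SCI-monotonicity-fq-reduction} for each index.

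First fix \(i\in I\). By transport saturation (\cref{def:transport-saturation}) there is some \(\mathcal O_i\in\mathcal B\) with \(\mathcal O_i\leq^{\mathscr R_{\mathrm{cont}}}_{\mathrm{fq}}\mathcal P_i\). By \cref{rem:decoder-regular-fq-instances}, the relation \(\leq^{\mathscr R_{\mathrm{cont}}}_{\mathrm{fq}}\) coincides with \(\leq_{G,\mathrm{fq}}\) from \cref{def:fq-eval-reduction}. This can be checked directly: the clauses of \cref{def:decoder-regular-fq-transport} with \(D\) continuous are verbatim the clauses of \cref{def:fq-eval-reduction}. Hence \(\mathcal O_i\leq_{G,\mathrm{fq}}\mathcal P_i\).

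Next, \cref{thm:SCI-monotonicity-fq-reduction} gives \(\SCIG(\mathcal O_i)\le\SCIG(\mathcal P_i)\). Since \(\mathcal B\) is an exact level-\(k\) transport basis, \(\SCIG(\mathcal O_i)=k\), and therefore \(\SCIG(\mathcal P_i)\ge k\). As \(i\) was arbitrary, \(\mathrm{LB}_k(\mathcal F)\) holds. The hypothesis \(\mathrm{UB}_k(\mathcal F)\) gives \(\SCIG(\mathcal P_i)\le k\) for all \(i\), so \(\SCIG(\mathcal P_i)=k\) for every \(i\in I\). This is family-pointwise exactness in the sense of \cref{def:sharpNot}.

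There is no substantial obstacle. The only point needing care is the identification of the continuous-decoder transport with \(\leq_{G,\mathrm{fq}}\), since the monotonicity theorem was proved for the latter. That theorem's proof uses continuity of \(D\) exactly to pass limits through the decoder, which is why the statement is restricted to \(\mathscr R_{\mathrm{cont}}\) rather than \(\mathscr R_{\mathrm{Bor}}\).
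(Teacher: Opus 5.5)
Your proposal is correct and matches the paper's proof step for step. Like the paper, you fix \(i\), pick \(\mathcal O_i\in\mathcal B\), identify \(\leq^{\mathscr R_{\mathrm{cont}}}_{\mathrm{fq}}\) with \(\leq_{G,\mathrm{fq}}\) via \cref{rem:decoder-regular-fq-instances}, apply \cref{thm:SCI-monotonicity-fq-reduction} together with \(\SCIG(\mathcal O_i)=k\), and close with \(\mathrm{UB}_k(\mathcal F)\).
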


\begin{proof}
Fix \(i\in I\). By transport saturation there exists \(\mathcal O_i\in \mathcal B\) such that
\[
\mathcal O_i \leq^{\mathscr R_{\mathrm{cont}}}_{\mathrm{fq}} \mathcal P_i.
\]
By \cref{rem:decoder-regular-fq-instances}, this is exactly
\[
\mathcal O_i \leq_{G,\mathrm{fq}} \mathcal P_i.
\]
Hence \cref{thm:SCI-monotonicity-fq-reduction} yields
\[
\SCIG (\mathcal O_i)\le \SCIG (\mathcal P_i).
\]
Because \(\mathcal B\) is an exact level-\(k\) transport basis,
\[
\SCIG (\mathcal O_i)=k.
\]
Therefore
\[
k\le \SCIG (\mathcal P_i).
\]
Since \(\mathsf{UB}_k(\mathcal F)\) holds, we also have
\[
\SCIG (\mathcal P_i)\le k.
\]
Hence
\[
\SCIG (\mathcal P_i)=k.
\]
Since \(i\in I\) was arbitrary, \(\mathcal F\) is family-pointwise exact at height \(k\).
\end{proof}

\begin{remark}[Principal-source case]\label{rem:principal-source-special-case}
\Cref{cor:transport-saturation-suffices} reduces to \cref{cor:concrete-sufficiency-package} when \(\mathcal B=\{\mathcal S_k\}\) is a singleton. Thus \cref{cor:concrete-sufficiency-package} is precisely the principal case of transport saturation.
\end{remark}

\Cref{cor:transport-saturation-suffices} is still stated for a chosen family \(\mathcal F\). For the converse direction, however, it is useful to work inside a larger ambient class \(\mathcal U\) and ask how the exact height-\(k\) members of \(\mathcal U\) are organized by transport. The next definitions separate the principal case, where one source generates the whole exact layer, from the non-principal case, where several incompatible exact obstructions may be needed.

\begin{definition}[Exact level-\(k\) layer, exact transport basis, and principal upper cone]\label{def:exact-layer-basis-principal-upper-cone}
Let \(\mathcal U\) be a class of SCI computational problems and let \(k\in\mathbb N\). Define the exact level-\(k\) layer of \(\mathcal U\) by
\[
\mathcal O_k(\mathcal U):=\{\mathcal P\in \mathcal U:\SCIG (\mathcal P)=k\}.
\]

A class \(\mathcal B_k(U)\subseteq \mathcal U\) is called an \textit{exact level-\(k\) transport basis over \(\mathcal U\)} if
\[
\forall \mathcal O\in \mathcal B_k(U),\qquad \SCIG (\mathcal O)=k,
\]
and
\[
\mathcal O_k(\mathcal U) = \{\mathcal P\in \mathcal U:\exists \mathcal O\in \mathcal B_k(\mathcal U)\ \bigl(\mathcal O\le_{G,\mathrm{fq}} \mathcal P\bigr)\}.
\]

If \(\mathcal S_k\in \mathcal U\), define its upper cone inside \(\mathcal U\) by
\[
\uparrow_{\le_{G,\mathrm{fq}}}^{\,\mathcal U}(\mathcal S_k) := \{\mathcal P\in \mathcal U: \mathcal S_k\le_{G,\mathrm{fq}} \mathcal P\}.
\]
We say that \(\mathcal O_k(\mathcal U)\) is \textit{principal} if there exists \(\mathcal S_k\in \mathcal U\) such that
\[
\SCIG (\mathcal S_k)=k \quad\text{and}\quad
\mathcal O_k(\mathcal U)=\uparrow_{\le_{G,\mathrm{fq}}}^{\,\mathcal U}(\mathcal S_k).
\]
\end{definition}

\begin{proposition}[Principal ambient criterion]\label{prop:principal-ambient-criterion}
Let \(\mathcal U\) be a class of SCI computational problems and let \(k\in\mathbb N\). Assume
\[
\forall \mathcal P\in \mathcal U,\quad \SCIG (\mathcal P)\le k,
\]
and assume that there exists \(\mathcal S_k\in \mathcal U\) such that
\[
\SCIG (\mathcal S_k)=k \quad\text{and}\quad
\mathcal O_k(\mathcal U)=\uparrow_{\le_{G,\mathrm{fq}}}^{\,\mathcal U}(\mathcal S_k).
\]
Then for every nonempty family \(\mathcal F\subseteq \mathcal U\),
\[
\mathcal F \text{ is family-pointwise exact at height }k \iff
\forall \mathcal P\in \mathcal F,\ \mathcal S_k\le_{G,\mathrm{fq}} \mathcal P,
\]
and
\[
\mathcal F \text{ is witness-space sharp at height }k \iff
\exists \mathcal P\in \mathcal F,\ \mathcal S_k\le_{G,\mathrm{fq}} \mathcal P.
\]
\end{proposition}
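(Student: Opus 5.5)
The plan is to reduce both equivalences to the set identity \(\mathcal O_k(\mathcal U)=\uparrow_{\le_{G,\mathrm{fq}}}^{\,\mathcal U}(\mathcal S_k)\), combined with the ambient upper bound. The key observation is that, because \(\SCIG(\mathcal P)\le k\) for every \(\mathcal P\in\mathcal U\), the exact layer \(\mathcal O_k(\mathcal U)\) consists precisely of those \(\mathcal P\in\mathcal U\) with \(\SCIG(\mathcal P)\ge k\). By the principality hypothesis, for every \(\mathcal P\in\mathcal U\) we therefore have
\[
\SCIG(\mathcal P)=k \iff \mathcal P\in \mathcal O_k(\mathcal U) \iff \mathcal S_k\le_{G,\mathrm{fq}}\mathcal P .
\]
This member-wise equivalence is the only real content. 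Everything else is quantifier bookkeeping over the family \(\mathcal F\subseteq\mathcal U\), where \(\mathcal F\) is viewed as an indexed family with all members in \(\mathcal U\).

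For the first equivalence, family-pointwise exactness at \(k\) means \(\SCIG(\mathcal P)=k\) for every member \(\mathcal P\) of \(\mathcal F\). Applying the displayed equivalence to each member, this is the same as \(\mathcal S_k\le_{G,\mathrm{fq}}\mathcal P\) for all \(\mathcal P\in\mathcal F\). For the backward direction one could alternatively invoke \cref{cor:concrete-sufficiency-package}, with (C1) given, (C2) the hypothesis, and (C3) inherited from the ambient bound. However, the direct set-identity argument already covers both directions, so I would use it and only remark on the connection.

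For the second equivalence, \cref{def:sharpNot}(iii) says witness-space sharpness at \(k\) means \(\mathrm{UB}_k(\mathcal F)\) together with the existence of a member of exact height \(k\). Since \(\mathcal F\subseteq\mathcal U\), the condition \(\mathrm{UB}_k(\mathcal F)\) holds automatically. Witness-space sharpness therefore reduces to \(\exists\mathcal P\in\mathcal F\) with \(\SCIG(\mathcal P)=k\), which by the displayed equivalence is \(\exists\mathcal P\in\mathcal F\) with \(\mathcal S_k\le_{G,\mathrm{fq}}\mathcal P\). Nonemptiness of \(\mathcal F\) is not needed for this argument, though it is assumed throughout anyway.

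There is no genuine obstacle here. The one point needing care is making explicit that the upper bound on \(\mathcal U\) is exactly what turns the exact layer into a lower-bound set, and hence supplies the \(\mathrm{UB}_k\) half of witness-space sharpness for free. I would also note that \cref{thm:SCI-monotonicity-fq-reduction} gives the inclusion \(\uparrow(\mathcal S_k)\subseteq\mathcal O_k(\mathcal U)\) independently. The reverse inclusion, however, is genuinely a hypothesis and cannot be derived.
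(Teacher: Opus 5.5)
Your proposal is correct and follows the paper's proof essentially verbatim. Both arguments read off the member-wise equivalence $\SCIG(\mathcal P)=k \iff \mathcal S_k\le_{G,\mathrm{fq}}\mathcal P$ for $\mathcal P\in\mathcal U$ from the identity $\mathcal O_k(\mathcal U)=\uparrow^{\,\mathcal U}_{\le_{G,\mathrm{fq}}}(\mathcal S_k)$, and both use the ambient upper bound only to make $\mathrm{UB}_k(\mathcal F)$ automatic in the witness-space case.
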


\begin{proof}
Let \(\mathcal F\subseteq \mathcal U\) be nonempty. We first prove the criterion for family-pointwise exactness.
By definition,
\[
\mathcal F \text{ is family-pointwise exact at height }k \iff
\forall \mathcal P\in \mathcal F,\ \SCIG (\mathcal P)=k.
\]
Since
\[
\mathcal O_k(\mathcal U)=\uparrow_{\le_{G,\mathrm{fq}}}^{\,\mathcal U}(\mathcal S_k),
\]
this is equivalent to
\[
\forall \mathcal P\in \mathcal F,\ \mathcal P\in \uparrow_{\le_{G,\mathrm{fq}}}^{\,\mathcal U}(\mathcal S_k),
\]
i.e.
\[
\forall \mathcal P\in \mathcal F,\ \mathcal S_k\le_{G,\mathrm{fq}} \mathcal P.
\]

We now prove the witness-space criterion. Because
\[
\forall \mathcal P\in \mathcal U,\quad \SCIG (\mathcal P)\le k,
\]
a nonempty family \(\mathcal F\subseteq \mathcal U\) is witness-space sharp at height \(k\) iff
\[
\exists \mathcal P\in \mathcal F,\ \SCIG (\mathcal P)=k.
\]
Using again
\[
\mathcal O_k(\mathcal U)=\uparrow_{\le_{G,\mathrm{fq}}}^{\,\mathcal U}(\mathcal S_k),
\]
this is equivalent to
\[
\exists \mathcal P\in \mathcal F,\ \mathcal S_k\le_{G,\mathrm{fq}} \mathcal P.
\]
\end{proof}

\begin{remark}[Only the converse is nontrivial in the principal case]\label{rem:only-converse-nontrivial-principal-case}
Under the ambient upper-bound hypothesis
\[
\forall \mathcal P\in \mathcal U,\quad \SCIG (\mathcal P)\le k
\]
and the source exactness
\[
\SCIG (\mathcal S_k)=k,
\]
the implication
\[
\mathcal S_k\le_{G,\mathrm{fq}} \mathcal P \Longrightarrow \SCIG (\mathcal P)=k \quad (\mathcal P\in \mathcal U)
\]
is automatic by \cref{thm:SCI-monotonicity-fq-reduction}. Therefore the nontrivial part of the principal case is the converse implication
\[
\SCIG (\mathcal P)=k \Longrightarrow \mathcal S_k\le_{G,\mathrm{fq}} \mathcal P \quad (\mathcal P\in \mathcal U).
\]
Once this ambient-level principal converse is proved, the family-level principal exactness statements follow formally from \cref{prop:principal-ambient-criterion}.
\end{remark}

\begin{definition}[Pairwise exact-\(k\) transport incompatibility]\label{def:pairwise-exact-k-transport-incompatibility}
Let \(\mathcal U\) be a class of SCI computational problems and let \(k\in\mathbb N\). Two problems \(\mathcal O_0, \mathcal O_1\in \mathcal U\) are called \textit{pairwise exact-\(k\) transport-incompatible in \(\mathcal U\)} if
\[
\SCIG (\mathcal O_0)= \SCIG (\mathcal O_1)=k,
\]
and there does not exist any \(\mathcal S\in \mathcal U\) such that
\[
\SCIG (\mathcal S)=k, \quad \mathcal S\le_{G,\mathrm{fq}} \mathcal O_0, \quad
\mathcal S\le_{G,\mathrm{fq}} \mathcal O_1.
\]

A family \((\mathcal O_i)_{i\in I}\subseteq \mathcal U\) is called \textit{pairwise exact-\(k\) transport-incompatible} if every \(\mathcal O_i\) has exact type-\(G\) height \(k\) and for all distinct \(i,j\in I\), the pair \((\mathcal O_i, \mathcal O_j)\) is pairwise exact-\(k\) transport-incompatible in \(\mathcal U\).
\end{definition}

\begin{theorem}[Raw type-\(G\) non-principality criteria]\label{thm:raw-nonprincipality-criteria}
Let \(\mathcal U\) be a class of SCI computational problems and let \(k\in\mathbb N\). Then the following hold.

\begin{enumerate}
	\item If \(\mathcal U\) admits an exact level-\(k\) transport basis \(\mathcal B_k(\mathcal U)\) containing two pairwise exact-\(k\) transport-incompatible members, then \(\mathcal O_k(\mathcal U)\) is not principal.
	
	\item If \(\mathcal O_k(\mathcal U)\) contains an infinite pairwise exact-\(k\) transport-incompatible family, then \(\mathcal U\) admits no finite exact level-\(k\) transport basis.
\end{enumerate}
\end{theorem}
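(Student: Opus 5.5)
Both parts will be proved by contradiction, using only the definitions in \cref{def:exact-layer-basis-principal-upper-cone} and \cref{def:pairwise-exact-k-transport-incompatibility}; no monotonicity result is needed beyond the defining equalities of the layer. The key observation is that a principal layer, or a finite basis, forces any two (respectively, infinitely many) exact-\(k\) members to share a common exact-\(k\) lower bound inside \(\mathcal U\). That is exactly what incompatibility forbids.

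For part (1), I would let \(\mathcal O_0,\mathcal O_1\in\mathcal B_k(\mathcal U)\) be pairwise exact-\(k\) transport-incompatible and suppose \(\mathcal O_k(\mathcal U)\) is principal, with source \(\mathcal S_k\in\mathcal U\) satisfying \(\SCIG(\mathcal S_k)=k\) and \(\mathcal O_k(\mathcal U)=\uparrow_{\le_{G,\mathrm{fq}}}^{\,\mathcal U}(\mathcal S_k)\).
\begin{itemize}
\item Since basis members lie in \(\mathcal U\) and have height \(k\), both \(\mathcal O_0\) and \(\mathcal O_1\) lie in \(\mathcal O_k(\mathcal U)\).
\item By principality, \(\mathcal S_k\le_{G,\mathrm{fq}}\mathcal O_0\) and \(\mathcal S_k\le_{G,\mathrm{fq}}\mathcal O_1\).
\item Then \(\mathcal S:=\mathcal S_k\) is an element of \(\mathcal U\) of exact height \(k\) reducing to both, contradicting incompatibility.
\end{itemize}
Note that membership of \(\mathcal O_0,\mathcal O_1\) in the basis is used only to place them in \(\mathcal O_k(\mathcal U)\).

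For part (2), I would let \((\mathcal O_i)_{i\in I}\subseteq\mathcal O_k(\mathcal U)\) be an infinite pairwise incompatible family and suppose \(\mathcal B\) is a finite exact level-\(k\) transport basis over \(\mathcal U\).
\begin{itemize}
\item Each \(\mathcal O_i\) lies in \(\mathcal O_k(\mathcal U)\), so by the basis identity there is \(\beta(i)\in\mathcal B\) with \(\beta(i)\le_{G,\mathrm{fq}}\mathcal O_i\).
\item The map \(\beta:I\to\mathcal B\) goes from an infinite set to a finite one, so by pigeonhole there are distinct \(i\neq j\) with \(\beta(i)=\beta(j)=:\mathcal S\).
\item This \(\mathcal S\) lies in \(\mathcal U\), since \(\mathcal B\subseteq\mathcal U\), and has \(\SCIG(\mathcal S)=k\) by the basis definition.
\item It reduces to both \(\mathcal O_i\) and \(\mathcal O_j\), contradicting their pairwise incompatibility.
\end{itemize}
Choosing \(\beta\) requires the axiom of choice; alternatively, finiteness of \(\mathcal B\) lets me argue via the finitely many upper cones of basis elements covering the infinite family. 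Some cone must then contain two distinct members.

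There is no serious obstacle; the main care points are bookkeeping. First, the indexing must be handled so that ``infinite family'' yields two \emph{distinct} indices whose problems are also distinct, or at least whose pair is declared incompatible. The definition quantifies over distinct indices, so the pigeonhole step on indices suffices. Second, I must check that the common source really belongs to \(\mathcal U\), as the incompatibility definition requires; this holds because both a principal source and basis members are required to lie in \(\mathcal U\).
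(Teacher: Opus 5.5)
Your proposal is correct and follows essentially the same argument as the paper. In (1), principality places the principal source below both basis members, contradicting their incompatibility; in (2), pigeonhole over the finite basis produces one basis element below two distinct family members. The paper differs only cosmetically: it indexes the infinite family by \(\mathbb N\) and enumerates the basis as \(\{\mathcal S_1,\dots,\mathcal S_m\}\), so choosing the least admissible index \(j(n)\) removes any need for the axiom of choice.
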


\begin{proof}
\noindent\textbf{(1):} Let \(\mathcal B_k(\mathcal U)\) be an exact level-\(k\) transport basis over \(\mathcal U\), and suppose that \(\mathcal O_0, \mathcal O_1\in \mathcal B_k(\mathcal U)\) are pairwise exact-\(k\) transport-incompatible. Assume, toward a contradiction, that \(\mathcal O_k(\mathcal U)\) is principal. Then there exists \(\mathcal S_k\in \mathcal U\) such that
\[
\SCIG (\mathcal S_k)=k \quad\text{and}\quad \mathcal O_k(\mathcal U)=\uparrow_{\le_{G,\mathrm{fq}}}^{\,\mathcal U}(\mathcal S_k).
\]
Since \(\mathcal B_k(\mathcal U)\) is an exact level-\(k\) transport basis, every element of \(\mathcal B_k(\mathcal U)\) has exact height \(k\). In particular,
\[
\mathcal O_0, \mathcal O_1\in \mathcal O_k(\mathcal U).
\]
Hence
\[
\mathcal S_k\le_{G,\mathrm{fq}} \mathcal O_0 \quad\text{and}\quad \mathcal S_k\le_{G,\mathrm{fq}} \mathcal O_1.
\]
Together with
\[
\SCIG (\mathcal S_k)=k,
\]
this contradicts the pairwise exact-\(k\) transport incompatibility of \(\mathcal O_0\) and \(\mathcal O_1\). Therefore \(\mathcal O_k(\mathcal U)\) is not principal.

\smallskip
\noindent\textbf{(2):} Assume that \((\mathcal O_n)_{n\in\mathbb N}\subseteq \mathcal \mathcal O_k(\mathcal U)\) is an infinite pairwise exact-\(k\) transport-incompatible family.
Assume, toward a contradiction, that \(\mathcal U\) admits a finite exact level-\(k\) transport basis
\[
\mathcal B_k(\mathcal U)=\{\mathcal S_1,\dots,\mathcal S_m\}.
\]
Because \(\mathcal B_k(\mathcal U)\) is an exact level-\(k\) transport basis, for every \(n\in\mathbb N\) there exists \(j(n)\in\{1,\dots,m\}\) such that
\[
\mathcal S_{j(n)}\le_{G,\mathrm{fq}} \mathcal O_n.
\]
By the pigeonhole principle, there exist distinct \(n\neq n'\) such that
\[
j(n)=j(n')=:j^\ast.
\]
Hence
\[
\mathcal S_{j^\ast}\le_{G,\mathrm{fq}} \mathcal O_n \quad\text{and}\quad
\mathcal S_{j^\ast}\le_{G,\mathrm{fq}} \mathcal O_{n'}.
\]
Since \(\mathcal B_k(\mathcal U)\) is an exact level-\(k\) transport basis, we also have
\[
\SCIG (\mathcal S_{j^\ast})=k.
\]
This contradicts the pairwise exact-\(k\) transport incompatibility of \(\mathcal O_n\) and \(\mathcal O_{n'}\). Therefore \(\mathcal U\) admits no finite exact level-\(k\) transport basis.
\end{proof}

\begin{corollary}[Transport saturation is stronger than witness-space sharpness]\label{cor:strictness-family-not-transport-saturated}
The strictness family
\[
        \mathcal F_{\mathrm{str}}=(\mathcal P_0, \mathcal P_{\exists})
\]
from \cref{def:strictness-family} is witness-space sharp at height \(1\), but it admits no exact level-\(1\) transport basis with respect to \(\mathcal R_{\mathrm{cont}}\).
\end{corollary}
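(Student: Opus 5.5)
The first half of the statement, that \(\mathcal F_{\mathrm{str}}\) is witness-space sharp at height \(1\), is exactly \cref{cor:source-family-strictness}, so it can be quoted directly.

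For the second half I would argue by contradiction. Suppose \(\mathcal B\) were an exact level-\(1\) transport basis for \(\mathcal F_{\mathrm{str}}\) with respect to \(\mathscr R_{\mathrm{cont}}\). Then \(\mathcal F_{\mathrm{str}}\) is \((1,\mathcal B,\mathscr R_{\mathrm{cont}})\)-transport-saturated, and every member of \(\mathcal B\) has \(\SCIG=1\). Applying transport saturation to the member \(\mathcal P_0\) gives some \(\mathcal O\in\mathcal B\) with \(\mathcal O\leq^{\mathscr R_{\mathrm{cont}}}_{\mathrm{fq}}\mathcal P_0\). By \cref{rem:decoder-regular-fq-instances}, this is the same as \(\mathcal O\leq_{G,\mathrm{fq}}\mathcal P_0\). \Cref{thm:SCI-monotonicity-fq-reduction} then gives
\[
1=\SCIG(\mathcal O)\le\SCIG(\mathcal P_0)=0,
\]
using \cref{lem:P0-height-zero}, which is a contradiction. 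Alternatively, one can note that the upper bound \(\mathrm{UB}_1(\mathcal F_{\mathrm{str}})\) holds, so \cref{cor:transport-saturation-suffices} would force pointwise exactness, contradicting \cref{cor:source-family-strictness}. I would present the direct monotonicity argument and mention this alternative as a remark.

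There is no real obstacle. The only point needing care is the reading of the definition: "admits an exact level-\(1\) transport basis" has to be understood as the conjunction of saturation and exactness of all basis members in \cref{def:transport-saturation}. Under that reading, the vacuous case \(\mathcal B=\varnothing\) is excluded, because saturation already requires some \(\mathcal O_{\mathcal P_0}\in\mathcal B\).
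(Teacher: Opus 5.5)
Your proof is correct and is essentially the paper's argument. The paper invokes \cref{cor:transport-saturation-suffices} together with \(\mathrm{UB}_1(\mathcal F_{\mathrm{str}})\) to force family-pointwise exactness, contradicting \cref{cor:source-family-strictness}; you instead inline that corollary's monotonicity step at the single member \(\mathcal P_0\), which makes the upper bound unnecessary, and you correctly read ``admits an exact level-\(1\) transport basis'' as saturation plus exactness of all basis members, as in \cref{def:transport-saturation}.
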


\begin{proof}
By \cref{cor:source-family-strictness}, the family \(\mathcal F_{\mathrm{str}}\) is witness-space sharp at height \(1\), but is not family-pointwise exact at height \(1\). In particular,
\[
        \mathrm{SCI}_G(\mathcal P)\le 1
\]
for $\mathcal P\in\mathcal F_{\mathrm{str}}$, because
\[
        \mathrm{SCI}_G(\mathcal P_0)=0, \, \mathrm{SCI}_G(\mathcal P_{\exists})=1.
\]
Suppose, toward a contradiction, that \(\mathcal F_{\mathrm{str}}\) admitted an exact level-\(1\) transport basis with respect to \(\mathcal R_{\mathrm{cont}}\). Then \cref{cor:transport-saturation-suffices} would imply that \(\mathcal F_{\mathrm{str}}\) is family-pointwise exact at height \(1\). This contradicts \cref{cor:source-family-strictness}. Therefore no such exact level-\(1\) continuous transport basis exists.
\end{proof}

This negative corollary shows that continuous transport saturation is strictly stronger than witness-space sharpness. The next proposition shows, on a natural finite ambient class from spectral SCI, that non-principality can also occur because two exact problems are transport-incompatible.

\begin{proposition}[A natural non-principal ambient class at height \(3\)]\label{prop:natural-nonprincipal-ambient-height-three}
Adopt the blind discrete-spectrum exact-input SCI problems from \cite[\S 3.4.3]{ColbrookHansen22}:
\[
\mathcal P_{d,1}^{\mathrm{blind}} := \bigl( \Xi_1^d,\Omega_1^d,(\mathcal M_H,d_H),\Lambda_1^d \bigr),
\]
\[
\mathcal P_{d,2}^{\mathrm{blind}} := \bigl(\Xi_2^d,\Omega_2^d,(\{0,1\},d_{\mathrm{disc}}),\Lambda_2^d\bigr),
\]
where
\[
\Xi_1^d(A):= \overline{\sigma_d(A)}, \quad
\Xi_2^d(A):=\mathbf 1_{\{\sigma_d(A)\neq\varnothing\}},
\]
and
\[
d_{\mathrm{disc}}(u,v):=
\begin{cases}
	0,&u=v,\\
	1,&u\neq v.
\end{cases}
\]

Set
\[
\mathcal U_d^{\mathrm{np}}:=\{\mathcal P_{d,1}^{\mathrm{blind}}, \mathcal P_{d,2}^{\mathrm{blind}}\}.
\]
Then
\[
\mathcal O_3(\mathcal U_d^{\mathrm{np}})= \mathcal U_d^{\mathrm{np}},
\]
and the exact level-\(3\) layer of \(\mathcal U_d^{\mathrm{np}}\) is not principal.
\end{proposition}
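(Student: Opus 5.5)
The plan is to prove two things. First, both members of \(\mathcal U_d^{\mathrm{np}}\) have exact type-\(G\) height \(3\). Second, no member of \(\mathcal U_d^{\mathrm{np}}\) can serve as a principal source.

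For the first claim I would invoke the classification in \cite[\S 3.4.3]{ColbrookHansen22}. It gives \(\SCIG(\mathcal P_{d,1}^{\mathrm{blind}})=\SCIG(\mathcal P_{d,2}^{\mathrm{blind}})=3\) in the exact-input setting. One must check that the towers and lower bounds there are raw type-\(G\) statements in the sense of \cref{def:GenAlgoSCI}: the lower bounds are proved for general algorithms, and the upper bounds use only finitely many matrix-entry queries at the deepest level. This gives \(\mathcal O_3(\mathcal U_d^{\mathrm{np}})=\mathcal U_d^{\mathrm{np}}\).

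For non-principality, note that the principal source \(\mathcal S_3\) must lie in \(\mathcal U_d^{\mathrm{np}}\), and the upper cone of \(\mathcal S_3\) must be all of \(\mathcal U_d^{\mathrm{np}}\). Reflexivity (\cref{prop:decoder-regular-fq-preorder}) makes \(\mathcal S_3\le_{G,\mathrm{fq}}\mathcal S_3\) automatic. So it suffices to show that neither cross reduction holds:
\[
\mathcal P_{d,1}^{\mathrm{blind}}\not\le_{G,\mathrm{fq}}\mathcal P_{d,2}^{\mathrm{blind}},
\qquad
\mathcal P_{d,2}^{\mathrm{blind}}\not\le_{G,\mathrm{fq}}\mathcal P_{d,1}^{\mathrm{blind}}.
\]
Equivalently, by \cref{def:pairwise-exact-k-transport-incompatibility}, the two problems are pairwise exact-\(3\) transport-incompatible, and then \cref{thm:raw-nonprincipality-criteria}(1) applies with \(\mathcal B_3=\mathcal U_d^{\mathrm{np}}\).

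The first non-reduction is a cardinality argument. Any decoder \(D:(\{0,1\},d_{\mathrm{disc}})\to(\mathcal M_H,d_H)\) has image of at most two points. The identity \(\Xi_1^d=D\circ\Xi_2^d\circ E\) would then force \(\Xi_1^d\) to take at most two values. But diagonal operators with eigenvalue sets \(\{0\}\), \(\{1\}\) and \(\{2\}\) (isolated and of finite multiplicity, inside \(\Omega_1^d\)) give three distinct values of \(\overline{\sigma_d}\).

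The second non-reduction is where I expect the real work. A continuous \(D:\mathcal M_H\to\{0,1\}\) is locally constant, so \(U:=D^{-1}(1)\) is clopen. The reduction would then say
\[
\sigma_d(A)\neq\varnothing \iff \overline{\sigma_d(E(A))}\in U \qquad (A\in\Omega_2^d),
\]
with every entry of \(E(A)\) computable from finitely many entries of \(A\). My first attempt would use the topology of \(\mathcal M_H\) under the convention of \cite{ColbrookHansen22}. If the space of closed subsets (with \(\varnothing\) adjoined) is connected, or at least the sets actually realised as \(\overline{\sigma_d(E(A))}\) lie in a single component, then \(D\circ\Xi_1^d\circ E\) is constant. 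That contradicts \(\Xi_2^d\) taking both values.

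If instead \(\varnothing\) is isolated, this fails: \(E=\mathrm{id}\) with \(D=\mathbf 1_{\{\ne\varnothing\}}\) would then be a genuine reduction. In that case the argument has to be a finite-information one. I would take an input \(A_0\) with \(\sigma_d(A_0)=\varnothing\) and perturb it far out by finite-rank blocks, creating isolated eigenvalues while fixing any prescribed finite set of entries. The finite-query clause then lets one freeze finitely many entries of \(E(A)\). The hope is to exploit continuity of \(D\) along a sequence whose images converge in \(d_H\) while \(\Xi_2^d\) flips. Pinning down the exact metric and domain conventions of \(\mathcal M_H\) and \(\Omega_i^d\) in \cite{ColbrookHansen22}, and showing that this dichotomy resolves in favour of incompatibility, is the main obstacle. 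I would settle that convention before writing anything else.
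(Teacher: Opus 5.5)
The parts on exact heights, on \(\mathcal P_{d,1}^{\mathrm{blind}}\not\le_{G,\mathrm{fq}}\mathcal P_{d,2}^{\mathrm{blind}}\), and on the reduction to \cref{thm:raw-nonprincipality-criteria}(1) match the paper. For the cardinality witnesses, the paper uses \(\mathrm{diag}(\lambda,0,0,\dots)\) with \(\lambda\in\{1,2,3\}\). Your value \(\{0\}\) needs a nonzero infinite-multiplicity background, since \(\mathrm{diag}(0,0,\dots)\) has empty discrete spectrum.

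The genuine gap is \(\mathcal P_{d,2}^{\mathrm{blind}}\not\le_{G,\mathrm{fq}}\mathcal P_{d,1}^{\mathrm{blind}}\). You leave this conditional, and your fallback branch is incoherent. You correctly observe that if \(\varnothing\) were an isolated point of the output space, then \(E=\mathrm{id}\), \(D=\mathbf 1_{\{\neq\varnothing\}}\) would be a genuine reduction. In that case \(\mathcal P_{d,1}^{\mathrm{blind}}\) would be a principal generator and the proposition would be false. No finite-rank perturbation or finite-information argument can refute a reduction that exists. So that branch is not a harder route to the same conclusion. The statement stands or falls with the convention, and your instinct that everything hinges on it is right.

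The missing step is to commit to the paper's convention and prove one short lemma. Take \(\mathcal M_H\) to be the hyperspace of \emph{nonempty} compact subsets of \(\mathbb C\) with the Hausdorff metric, not closed sets with \(\varnothing\) adjoined. This space is path connected. For \(K,L\in\mathcal M_H\), use \(t\mapsto(1-2t)K\) on \([0,\tfrac12]\) and \(t\mapsto(2t-1)L\) on \([\tfrac12,1]\). Both pieces equal \(\{0\}\) at \(t=\tfrac12\), and each is continuous because \(d_H(sK,s'K)\le|s-s'|\max_{z\in K}|z|\).

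Hence every continuous \(D:(\mathcal M_H,d_H)\to(\{0,1\},d_{\mathrm{disc}})\) is constant, so \(\Xi_2^d=D\circ\Xi_1^d\circ E\) would be constant on \(\Omega_2^d\). This is contradicted by two inputs. The operator \(\mathrm{diag}(1,0,0,\dots)\) has \(\sigma_d=\{1\}\), so it takes value \(1\). A diagonal operator whose diagonal entries form a dense sequence in \([0,1]\) has empty discrete spectrum, so it takes value \(0\).

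This argument is purely topological: it uses neither the encoder nor the finite-query clause. The machinery you anticipated is therefore unnecessary. The paper's argument does rely on \(\Xi_1^d\) taking values in nonempty compact sets, which is exactly what rules out your \(\varnothing\)-isolated scenario.
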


\begin{proof}
By \cite[Thm.~3.17]{ColbrookHansen22}, one has
\[
\{\Xi_1^d,\Omega_1^d\}\in \Sigma^A_3\setminus \Delta^G_3, \quad
\{\Xi_2^d,\Omega_2^d\}\in \Sigma^A_3\setminus \Delta^G_3.
\]
Using the hierarchy relation \cite[(2.1)]{ColbrookHansen22} together with \cite[App.~A, Def.~A.10]{ColbrookHansen22}, it follows that
\[
\SCIG (\mathcal P_{d,1}^{\mathrm{blind}}) = \SCIG (\mathcal P_{d,2}^{\mathrm{blind}}) = 3.
\]
Hence
\[
\mathcal O_3(\mathcal U_d^{\mathrm{np}})= \mathcal U_d^{\mathrm{np}}.
\]

We now prove that the two members are pairwise exact-\(3\) transport-incompatible. First we show
\[
\mathcal P_{d,1}^{\mathrm{blind}}\not\le_{G,\mathrm{fq}} \mathcal P_{d,2}^{\mathrm{blind}}.
\]
Assume, toward a contradiction, that
\[
\mathcal P_{d,1}^{\mathrm{blind}}\le_{G,\mathrm{fq}} \mathcal P_{d,2}^{\mathrm{blind}}.
\]
Then \cref{def:fq-eval-reduction} provides a continuous decoder
\[
D:(\{0,1\},d_{\mathrm{disc}})\to (\mathcal M_H,d_H).
\]
Hence the image of \(D\) has cardinality at most \(2\), so the target map \(\Xi_1^d\) could take at most two values on \(\Omega_1^d\).

For \(\lambda\in\{1,2,3\}\), define the diagonal operator
\[
D_\lambda e_1:=\lambda e_1, \quad
D_\lambda e_n:=0 \quad (n\ge 2).
\]
These are bounded normal diagonal operators and belong to \(\Omega_1^d\). By the definition of the discrete spectrum for normal operators in \cite[§3.4]{ColbrookHansen22}, one has
\[
\sigma_d(D_\lambda)=\{\lambda\},
\]
and therefore
\[
\Xi_1^d(D_\lambda)=\{\lambda\}.
\]
So \(\Xi_1^d\) takes at least three distinct values on \(\Omega_1^d\), a contradiction. Thus
\[
\mathcal P_{d,1}^{\mathrm{blind}}\not\le_{G,\mathrm{fq}} \mathcal P_{d,2}^{\mathrm{blind}}.
\]

Next we show
\[
\mathcal P_{d,2}^{\mathrm{blind}}\not\le_{G,\mathrm{fq}} \mathcal P_{d,1}^{\mathrm{blind}}.
\]
Assume, toward a contradiction, that
\[
\mathcal P_{d,2}^{\mathrm{blind}}\le_{G,\mathrm{fq}} \mathcal P_{d,1}^{\mathrm{blind}}.
\]
Then \cref{def:fq-eval-reduction} provides a continuous decoder
\[
D:(\mathcal M_H,d_H)\to (\{0,1\},d_{\mathrm{disc}}).
\]
We claim that \(D\) must be constant. Indeed, \(\mathcal M_H\) is path connected: given \(K,L\in \mathcal M_H\), define
\[
\gamma:[0,1]\to \mathcal M_H
\]
by
\[
\gamma(t):=
\begin{cases}
	(1-2t)K,&0\le t\le \frac12,\\[0.3em]
	(2t-1)L,&\frac12\le t\le 1.
\end{cases}
\]
At \(t=\frac12\), both branches equal \(\{0\}\), so \(\gamma\) is continuous in the Hausdorff metric and joins \(K\) to \(L\). Since the codomain \((\{0,1\},d_{\mathrm{disc}})\) is discrete, every continuous map
\[
(\mathcal M_H,d_H)\to (\{0,1\},d_{\mathrm{disc}})
\]
is constant. Therefore \(\Xi_2^d\) would be constant on \(\Omega_2^d\), a contradiction. Indeed, the operator \(D_1\) above satisfies
\[
\sigma_d(D_1)=\{1\},
\]
so
\[
\Xi_2^d(D_1)=1.
\]
On the other hand, if \((q_n)_{n\in\mathbb N}\subseteq [0,1]\) has dense range and
\[
Ne_n:=q_n e_n,
\]
then \(N\) is bounded normal diagonal, belongs to \(\Omega_2^d\), and has empty discrete spectrum, so
\[
\Xi_2^d(N)=0.
\]
Thus \(\Xi_2^d\) is not constant. Hence
\[
\mathcal P_{d,2}^{\mathrm{blind}}\not\le_{G,\mathrm{fq}} \mathcal P_{d,1}^{\mathrm{blind}}.
\]
Therefore the two members of \(\mathcal U_d^{\mathrm{np}}\) are pairwise exact-\(3\) transport-incompatible. Since \(\mathcal U_d^{\mathrm{np}}\) itself is an exact level-\(3\) transport basis over itself (by reflexivity of \(\le_{G,\mathrm{fq}}\)), \cref{thm:raw-nonprincipality-criteria}(1) applies and yields that the exact level-\(3\) layer $\mathcal O_3(\mathcal U_d^{\mathrm{np}}$) is not principal.
\end{proof}

\begin{definition}[Decoder-regular finite-query transport degrees]\label{def:transport-degrees}
Fix an admissible decoder class \(\mathscr R\). For SCI computational problems \(\mathcal P,\mathcal Q\), define
\[
\mathcal P \equiv^{\mathscr R}_{\mathrm{fq}} \mathcal Q :\Longleftrightarrow
\mathcal P \leq^{\mathscr R}_{\mathrm{fq}} \mathcal Q \ \wedge\ \mathcal Q \leq^{\mathscr R}_{\mathrm{fq}} \mathcal P.
\]
The quotient classes under \(\equiv^{\mathscr R}_{\mathrm{fq}}\) are called the \(\mathscr R\)-\textit{finite-query transport degrees}.
\end{definition}

By \cref{prop:decoder-regular-fq-preorder}, the quotient by \(\equiv^{\mathscr R}_{\mathrm{fq}}\) is a partially ordered degree structure. A natural question from computability theory is now, if it has a lattice structure. The answer is no in full generality as we will prove now.

\begin{proposition}[The transport-degree quotient need not be a lattice]\label{prop:transport-degrees-not-lattice}
There exists an admissible decoder class \(\mathscr R_{\mathrm{id}}\) such that the partially ordered set of \(\mathscr R_{\mathrm{id}}\)-finite-query transport degrees is not even an upper
semilattice. In particular, it is not a lattice.
\end{proposition}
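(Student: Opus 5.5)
The plan is to take the smallest possible admissible decoder class, namely
\[
\mathscr R_{\mathrm{id}}:=\{\mathrm{id}_{\mathcal M}:(\mathcal M,d)\to(\mathcal M,d)\ :\ (\mathcal M,d)\text{ a metric space}\}.
\]
We regard a map between metric spaces as carrying its domain and codomain, so $\mathrm{id}_{\mathcal M}$ and $\mathrm{id}_{\mathcal N}$ are different maps when $(\mathcal M,d)\neq(\mathcal N,\rho)$. Admissibility in the sense of \cref{def:admissible-decoder-class} is then immediate. Every identity belongs to $\mathscr R_{\mathrm{id}}$ by construction. Two members are composable only if the codomain of the first equals the domain of the second, and in that case the composition is $\mathrm{id}_{\mathcal M}\circ\mathrm{id}_{\mathcal M}=\mathrm{id}_{\mathcal M}\in\mathscr R_{\mathrm{id}}$. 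By \cref{prop:decoder-regular-fq-preorder}, $\leq^{\mathscr R_{\mathrm{id}}}_{\mathrm{fq}}$ is a preorder, so the degree quotient of \cref{def:transport-degrees} is a genuine partial order.

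The key observation is a rigidity property. If $\mathcal Q\leq^{\mathscr R_{\mathrm{id}}}_{\mathrm{fq}}\mathcal P$, then the decoder $D:(\mathcal M,d)\to(\mathcal N,\rho)$ lies in $\mathscr R_{\mathrm{id}}$. This forces $(\mathcal M,d)=(\mathcal N,\rho)$ and $D=\mathrm{id}$. Hence the output metric space is an invariant of the preorder: comparable problems, and in particular equivalent problems, have identical output spaces. The output space is therefore constant on each $\equiv^{\mathscr R_{\mathrm{id}}}_{\mathrm{fq}}$-degree.

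Next I would exhibit two degrees with no common upper bound. Take $\mathcal P_0$ from \cref{def:height-zero-degenerate}, with output space $(\{0,1\},d_{\mathrm{disc}})$. Take also the analogous one-point problem
\[
\mathcal P_0':=(\Xi_0',\{\ast\},(\{0,1,2\},d_{\mathrm{disc}}),\{c\}),\qquad \Xi_0'(\ast):=0,
\]
which is an SCI computational problem because the consistency condition is vacuous on a single input. Suppose a problem $\mathcal R$ satisfied both $\mathcal P_0\leq^{\mathscr R_{\mathrm{id}}}_{\mathrm{fq}}\mathcal R$ and $\mathcal P_0'\leq^{\mathscr R_{\mathrm{id}}}_{\mathrm{fq}}\mathcal R$. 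By rigidity, the output space of $\mathcal R$ would equal both $\{0,1\}$ and $\{0,1,2\}$, which is impossible. So the degrees $[\mathcal P_0]$ and $[\mathcal P_0']$ have no upper bound at all, hence no least upper bound. The poset is therefore not an upper semilattice, and a fortiori not a lattice.

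The argument is essentially formal. The only delicate point, and the one I expect to need care, is the bookkeeping convention that maps carry their domain and codomain. This convention is what makes $\mathscr R_{\mathrm{id}}$ admissible and also what makes decoders in $\mathscr R_{\mathrm{id}}$ force equality of output spaces. I would state this convention explicitly at the start of the proof. If one preferred to avoid it, the same argument still works with output spaces of different cardinalities. Any identity decoder is then bijective between spaces that cannot be in bijection, so no decoder exists either way.
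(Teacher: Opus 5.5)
Your proof is correct and is essentially the paper's argument: the paper also takes $\mathscr R_{\mathrm{id}}$, checks admissibility the same way, and uses the rigidity that an identity decoder forces the upper bound's output space to equal each source's output space; the only difference is that it uses one-point problems with output spaces $\{0\}$ and $\{1\}$ rather than your $\{0,1\}$ and $\{0,1,2\}$. Your closing fallback remark is wrong, though: if maps do not carry their codomain, the identity of $\{0,1\}$ is also a map into $\{0,1,2\}$ and need not be bijective onto it, so $\mathcal P_0$ itself becomes a common upper bound of $\mathcal P_0$ and $\mathcal P_0'$; your pair therefore works only under the typed convention you state, whereas the paper's disjoint singletons would survive the untyped reading too (an identity landing in both forces $\mathcal M_{\mathcal R}=\varnothing$, hence $\Omega_{\mathcal R}=\varnothing$, so no encoding of a one-point input exists).
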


\begin{proof}
Define
\[
\mathscr R_{\mathrm{id}} := \bigl\{ \mathrm{id}_{(\mathcal M,d)}:(\mathcal M,d)\to (\mathcal M,d) :\ (\mathcal M,d)\ \text{a metric space} \bigr\}.
\]
We first check that \(\mathscr R_{\mathrm{id}}\) is an admissible decoder class in the sense of \cref{def:admissible-decoder-class}. (1) is obviously fulfilled, so all we have to show is the closure under composition. If
\[
f:(\mathcal M,d)\to(\mathcal M,d),\quad g:(\mathcal N,\rho)\to(\mathcal N,\rho)
\]
belong to \(\mathscr R_{\mathrm{id}}\) and the composition \(g\circ f\) is defined, then necessarily $(\mathcal M,d)=(\mathcal N,\rho)$ as domain/codomain and
\[
f=g=\mathrm{id}_{(\mathcal M,d)}.
\]
Hence
\[
g\circ f=\mathrm{id}_{(\mathcal M,d)}\in \mathscr R_{\mathrm{id}}.
\]
Therefore \(\mathscr R_{\mathrm{id}}\) is admissible.

\smallskip
We now construct two SCI computational problems whose
\(\mathscr R_{\mathrm{id}}\)-transport degrees have no common upper bound.

Let
\[
\Omega_0:=\{\ast_0\},\quad \Omega_1:=\{\ast_1\}.
\]
Let the output spaces be the metric spaces
\[
\mathcal M_0:=\{0\},\quad \mathcal M_1:=\{1\},
\]
equipped with the unique metrics
\[
d_0(0,0):=0,\quad d_1(1,1):=0.
\]
Define target maps
\[
\Xi_0:\Omega_0\to \mathcal M_0,\qquad \Xi_0(\ast_0):=0,
\]
and
\[
\Xi_1:\Omega_1\to \mathcal M_1,\qquad \Xi_1(\ast_1):=1.
\]
Define evaluation families
\[
\Lambda_0:=\{c_0\},\qquad c_0(\ast_0):=0,
\]
and
\[
\Lambda_1:=\{c_1\},\qquad c_1(\ast_1):=0.
\]
Set
\[
\mathcal P_0:=(\Xi_0,\Omega_0,(\mathcal M_0,d_0),\Lambda_0), \quad
\mathcal P_1:=(\Xi_1,\Omega_1,(\mathcal M_1,d_1),\Lambda_1).
\]

These are SCI computational problems: the consistency condition from \cref{def:SCIcompProb} holds vacuously because each input class is a singleton.

We claim that there is no SCI computational problem \(\mathcal R\) such that
\[
\mathcal P_0 \leq^{\mathscr R_{\mathrm{id}}}_{\mathrm{fq}} \mathcal R \quad\text{and}\quad
\mathcal P_1 \leq^{\mathscr R_{\mathrm{id}}}_{\mathrm{fq}} \mathcal R.
\]
Suppose, toward a contradiction, that such an \(\mathcal R\) exists. Write
\[
\mathcal R=(\Xi_{\mathcal R},\Omega_{\mathcal R},(\mathcal M_{\mathcal R},d_{\mathcal R}),\Lambda_{\mathcal R}).
\]

From
\[
\mathcal P_0 \leq^{\mathscr R_{\mathrm{id}}}_{\mathrm{fq}} \mathcal R
\]
we obtain, by \cref{def:decoder-regular-fq-transport}, a decoder
\[
D_0:(\mathcal M_{\mathcal R},d_{\mathcal R})\to (\mathcal M_0,d_0)
\]
with
\[
D_0\in \mathscr R_{\mathrm{id}}.
\]
But every element of \(\mathscr R_{\mathrm{id}}\) is an identity map on its own domain/codomain. Hence \(D_0\) can exist only if $(\mathcal M_{\mathcal R},d_{\mathcal R})=(\mathcal M_0,d_0)$ and $D_0=\mathrm{id}_{(\mathcal M_0,d_0)}$. Therefore $\mathcal M_{\mathcal R}=\mathcal M_0=\{0\}$.

Likewise, from
\[
\mathcal P_1 \leq^{\mathscr R_{\mathrm{id}}}_{\mathrm{fq}} \mathcal R
\]
we obtain a decoder
\[
D_1:(\mathcal M_{\mathcal R},d_{\mathcal R})\to (\mathcal M_1,d_1)
\]
with
\[
D_1\in \mathscr R_{\mathrm{id}}.
\]
Again this is possible only if $(\mathcal M_{\mathcal R},d_{\mathcal R})=(\mathcal M_1,d_1)$, hence $\mathcal M_{\mathcal R}=\mathcal M_1=\{1\}$.

Combining gives $\{0\}=\mathcal M_{\mathcal R}=\{1\}$, which is impossible.
This contradiction proves that \(\mathcal P_0\) and \(\mathcal P_1\) have no common upper bound with respect to \(\leq^{\mathscr R_{\mathrm{id}}}_{\mathrm{fq}}\).

Now pass to the quotient by
\[
\equiv^{\mathscr R_{\mathrm{id}}}_{\mathrm{fq}}.
\]
Let
\[
[\mathcal P_0],\ [\mathcal P_1]
\]
denote the corresponding transport degrees. If the quotient poset were an upper semilattice, then \([\mathcal P_0]\) and \([\mathcal P_1]\) would admit a least upper bound. In particular, they would admit
some common upper bound. Since the quotient order is induced by the preorder \(\leq^{\mathscr R_{\mathrm{id}}}_{\mathrm{fq}}\), this would imply the existence of an SCI computational problem \(\mathcal R\) with
\[
\mathcal P_0 \leq^{\mathscr R_{\mathrm{id}}}_{\mathrm{fq}} \mathcal R \quad\text{and}\quad
\mathcal P_1 \leq^{\mathscr R_{\mathrm{id}}}_{\mathrm{fq}} \mathcal R,
\]
contradicting what we just proved. Therefore the degree structure for \(\mathscr R_{\mathrm{id}}\) is not an upper semilattice, and hence not a lattice.
\end{proof}

\begin{remark}[What the appendix proves for the natural decoder classes]\label{rem:transport-degrees-open-for-cont-bor}
\Cref{prop:transport-degrees-not-lattice} refutes lattice structure in full generality for arbitrary admissible decoder classes. \Cref{sec:LattTranspQuot} strengthens this for the two natural decoder classes
$\mathscr R_{\mathrm{cont}}$ and $\mathscr R_{\mathrm{Bor}}$ on the full class of SCI computational problems the corresponding quotients are not upper semilattices, hence not lattices. On the nondegenerate subclass
\[
\mathcal C_{\neq\varnothing,\neq\varnothing} :=
\{\mathcal P=(\Xi,\Omega,(M,d),\Lambda):\Omega\neq\varnothing,\ \Lambda\neq\varnothing\},
\]
the preorder is upward directed, and on
\[
\mathcal C_{\neq\varnothing} :=
\{ \mathcal P=(\Xi,\Omega,(\mathcal M,d),\Lambda):\Omega\neq\varnothing\},
\]
it is downward directed. Thus the remaining natural-class question is whether binary least upper bounds and greatest lower bounds exist on the nondegenerate subclass.
\end{remark}

\section{Natural Families Where The Upgrade Theorem Applies}\label{sec:interinteupgrade}
We now record natural infinite families of exact point-evaluation problems to which the principal case from \cref{cor:transport-saturation-suffices} applies; even stronger: these are families to which the concrete sufficiency package from \cref{cor:concrete-sufficiency-package} applies.
The first subsection treats interval integration. The second gives a spectral decision problem from the SCI literature whose family-pointwise exactness is stable under adjoining a fixed spectrally irrelevant block.

\subsection{Exact Integration On Compact Intervals}{\label{sec:ExactIntegrationCpt}}
The family considered in this section is the classical univariate numerical integration problem with function values as information.
For background on numerical quadrature see \cite[Ch.~1-2]{DavisRabinowitz84}; for the information-based complexity viewpoint on continuous problems with
finitely many function values see \cite[Ch.~3-4]{TraubWasilkowskiWozniakowski88} and for the broader standard-information integration perspective see \cite[Ch.~9]{NovakWozniakowski10Vol2}.

\begin{definition}[Exact point-evaluation integration on compact intervals]\label{def:interval-integration}
Let
\[
\mathcal I:=\{[a,b]\subseteq \mathbb R : a<b\}.
\]
For \(I=[a,b]\in\mathcal I\), define
\[
\Omega_I:=C(I,\mathbb R),
\qquad
\Lambda_I:=\{\operatorname{ev}_x:x\in I\},
\qquad
\operatorname{ev}_x(f):=f(x)\in \mathbb R\subseteq \mathbb C.
\]
Since every continuous function on a compact interval is Riemann integrable \cite[Thm.~6.8]{Rudin76}, the map
\[
\Xi_I:\Omega_I\to \mathbb R, \qquad
\Xi_I(f):=\int_a^b f(x)\,dx
\]
is well defined.

If \(f,g\in \Omega_I\) satisfy \(\Xi_I(f)\neq \Xi_I(g)\), then \(f\neq g\).
Hence there exists \(x\in I\) with \(f(x)\neq g(x)\), i.e.
\(\operatorname{ev}_x(f)\neq \operatorname{ev}_x(g)\).
Therefore
\[
\mathcal P_I^{\mathrm{int}} :=
\bigl(\Xi_I,\Omega_I,(\mathbb R,|\cdot|),\Lambda_I\bigr)
\]
is a computational problem in the sense of \cref{def:SCIcompProb}.

Define the family
\[
\mathcal F^{\mathrm{int}} :=
\bigl(\mathcal P_I^{\mathrm{int}}\bigr)_{I\in\mathcal I}.
\]
\end{definition}

\begin{lemma}[Uniform one-limit upper bound for interval integration]\label{lem:interval-integration-upper}
For every \(I=[a,b]\in\mathcal I\),
\[
\SCIG \bigl(\mathcal P_I^{\mathrm{int}}\bigr)\le 1.
\]
\end{lemma}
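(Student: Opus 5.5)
The plan is to exhibit an explicit height-one standard raw type-\(G\) tower for \(\mathcal P_I^{\mathrm{int}}\) built from Riemann sums on uniform grids. Fix \(I=[a,b]\in\mathcal I\). For \(n\in\mathbb N\), set \(x_j^{(n)}:=a+j(b-a)/n\) for \(j=0,\dots,n\), and define
\[
\Gamma_n(f):=\frac{b-a}{n}\sum_{j=1}^{n} f\bigl(x_j^{(n)}\bigr),\qquad
\Lambda_{\Gamma_n}(f):=\{\operatorname{ev}_{x_j^{(n)}}:j=1,\dots,n\}.
\]
The query set is finite, nonempty, and lies in \(\Lambda_I\), since every grid point belongs to \(I\).

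The first step is to check that each \((\Gamma_n,\Lambda_{\Gamma_n})\) is a general algorithm in the sense of \cref{def:GenAlgoSCI}(i). Here \(\Lambda_{\Gamma_n}\) is independent of \(f\), so the second equality in the consistency condition holds trivially. Suppose \(g(x_j^{(n)})=f(x_j^{(n)})\) for all \(j\). Then \(\Gamma_n(g)=\Gamma_n(f)\), because \(\Gamma_n\) depends only on these values. This is the non-adaptive case and needs no further argument.

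The second step is convergence: \(\Gamma_n(f)\to\Xi_I(f)\) in \((\mathbb R,|\cdot|)\) for every \(f\in C(I,\mathbb R)\). I will use uniform continuity of \(f\) on the compact interval \(I\). Given \(\varepsilon>0\), choose \(\delta\) with \(|f(x)-f(y)|<\varepsilon/(b-a)\) whenever \(|x-y|<\delta\). Then for \((b-a)/n<\delta\),
\[
\bigl|\Gamma_n(f)-\Xi_I(f)\bigr|\le\sum_{j=1}^n\int_{x_{j-1}^{(n)}}^{x_j^{(n)}}\bigl|f(x_j^{(n)})-f(x)\bigr|\,dx\le\varepsilon.
\]
Alternatively, one can cite the convergence of Riemann sums to the Riemann integral of a continuous function \cite[Thm.~6.8]{Rudin76}.

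Together these steps show that \((\Gamma_n)_{n\in\mathbb N}\) is a standard raw type-\(G\) tower of height \(1\) for \(\mathcal P_I^{\mathrm{int}}\), and hence \(\SCIG(\mathcal P_I^{\mathrm{int}})\le1\) by \cref{def:GenAlgoSCI}(iv). No step is a real obstacle. The only point needing care is formal: the query map must take values in nonempty finite subsets of \(\Lambda_I\), and \(\Gamma_n\) must be a genuine function of the queried values. Both hold by construction, with the right-endpoint grid always containing at least the point \(b\).
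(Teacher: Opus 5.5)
Your proof is correct and follows the paper's argument essentially verbatim: a non-adaptive uniform-grid Riemann-sum tower. It is a general algorithm because the query set is independent of \(f\), and it converges by uniform continuity of \(f\) on the compact interval \(I\). The only difference is that you use right endpoints \(x_1^{(n)},\dots,x_n^{(n)}\), whereas the paper uses the composite left-endpoint rule with nodes \(x_0^{(n)},\dots,x_{n-1}^{(n)}\); this is immaterial.
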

\begin{proof}
Fix \(I=[a,b]\in\mathcal I\). For every \(n\in\mathbb N\) and \(j\in\{0,\ldots,n\}\), set
\[
x^{I,n}_j:=a+j\frac{b-a}{n}.
\]
Define
\[
\Gamma_n^I:\Omega_I\to \mathbb R, \qquad
\Gamma_n^I(f):=\frac{b-a}{n}\sum_{j=0}^{n-1} f \left(x^{I,n}_j\right),
\]
and define the query-set map
\[
\Lambda_{\Gamma_n^I}(f) :=
\left\{\operatorname{ev}_{x^{I,n}_j}:0\le j\le n-1 \right\} \qquad (f\in\Omega_I).
\]
This queried set is finite and nonempty, so
\[
\Lambda_{\Gamma_n^I}(f)\in [\Lambda_I]^{<\omega}\setminus\{\emptyset\}.
\]

We first check that \((\Gamma_n^I,\Lambda_{\Gamma_n^I})\) is a general algorithm on
\(\mathcal P_I^{\mathrm{int}}\). Let \(f,g\in \Omega_I\) and assume that
\[
\eta(g)=\eta(f)\qquad\text{for all }\eta\in \Lambda_{\Gamma_n^I}(f).
\]
Since \(\Lambda_{\Gamma_n^I}(f)\) is independent of \(f\), one has
\[
\Lambda_{\Gamma_n^I}(g)=\Lambda_{\Gamma_n^I}(f).
\]
Moreover,
\[
g \left(x^{I,n}_j \right)=f \left(x^{I,n}_j \right) \qquad (j=0,\ldots,n-1),
\]
hence
\[
\Gamma_n^I(g) =
\frac{b-a}{n}\sum_{j=0}^{n-1} g \left(x^{I,n}_j \right) =
\frac{b-a}{n}\sum_{j=0}^{n-1} f \left(x^{I,n}_j \right) = \Gamma_n^I(f).
\]
Thus \((\Gamma_n^I,\Lambda_{\Gamma_n^I})\) is a general algorithm.

It remains to prove convergence. Fix \(f\in\Omega_I\) and \(\varepsilon>0\).
Because \(f\) is continuous on the compact interval \(I\), it is uniformly continuous \cite[Thm.~4.19]{Rudin76}.
Hence there exists \(\delta>0\) such that
\[
|x-y|<\delta \Longrightarrow |f(x)-f(y)|<\frac{\varepsilon}{b-a} \qquad (x,y\in I).
\]
Choose \(n\in\mathbb N\) so large that
\[
\frac{b-a}{n}<\delta.
\]
Then for every \(j\in\{0,\ldots,n-1\}\) and every \(x\in[x^{I,n}_j,x^{I,n}_{j+1}]\), one has
\[
|x-x^{I,n}_j|\le \frac{b-a}{n}<\delta,
\]
hence
\[
|f(x)-f(x^{I,n}_j)|<\frac{\varepsilon}{b-a}.
\]
Therefore
\begin{align*}
	\left| \Xi_I(f)-\Gamma_n^I(f) \right| &=
	\left| \sum_{j=0}^{n-1} \left( \int_{x^{I,n}_j}^{x^{I,n}_{j+1}} f(x)\,dx - \frac{b-a}{n}f(x^{I,n}_j) \right) \right| \\
	&\le \sum_{j=0}^{n-1} \int_{x^{I,n}_j}^{x^{I,n}_{j+1}} |f(x)-f(x^{I,n}_j)|\,dx \\
	&<\sum_{j=0}^{n-1} \frac{b-a}{n}\cdot \frac{\varepsilon}{b-a} = \varepsilon.
\end{align*}
Thus
\[
\lim_{n\to\infty}\Gamma_n^I(f)=\Xi_I(f) \qquad (f\in\Omega_I).
\]
Hence \(\bigl(\Gamma_n^I \bigr)_{n\in\mathbb N}\) is a height-\(1\) type-\(G\) tower for \(\mathcal P_I^{\mathrm{int}}\), proving
\[
\SCIG \bigl(\mathcal P_I^{\mathrm{int}} \bigr)\le 1.
\qedhere
\]
\end{proof}

\begin{remark}
The tower in \cref{lem:interval-integration-upper} is the classical composite left-endpoint rectangle rule;
compare the standard quadrature discussion in \cite[Ch.~2, \S2.4]{DavisRabinowitz84}.
\end{remark}

\begin{proposition}[The unit-interval source has exact height \(1\)]\label{prop:interval-integration-source}
Let
\[
\mathcal S_1:=\mathcal P_{[0,1]}^{\mathrm{int}}.
\]
Then $\SCIG(\mathcal S_1)=1.$
\end{proposition}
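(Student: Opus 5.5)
The upper bound $\SCIG(\mathcal S_1)\le 1$ is the case $I=[0,1]$ of \cref{lem:interval-integration-upper}. It therefore remains to show $\SCIG(\mathcal S_1)\ne 0$, i.e.\ that no single general algorithm $(\Gamma,\Lambda_\Gamma)$ on $\mathcal P^{\mathrm{int}}_{[0,1]}$ satisfies $\Gamma(f)=\int_0^1 f$ for every $f\in C([0,1],\mathbb R)$. The plan is a direct adversary (fooling) argument that uses only the stability clause in \cref{def:GenAlgoSCI}(i). No appeal to the companion paper is needed.

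Suppose such a $\Gamma$ exists. Apply it to the zero function $f_0\equiv 0$. Its query set $\Lambda_\Gamma(f_0)$ is finite, say $\{\operatorname{ev}_{x_1},\dots,\operatorname{ev}_{x_m}\}$ with $x_1,\dots,x_m\in[0,1]$. Choose an open interval $J\subseteq[0,1]\setminus\{x_1,\dots,x_m\}$; this is possible because a finite set cannot exhaust a nondegenerate interval. Let $g$ be a continuous nonnegative bump supported in $J$ and not identically zero, for instance a tent function on $J$. Then $g(x_j)=0=f_0(x_j)$ for all $j$, so $\eta(g)=\eta(f_0)$ for every $\eta\in\Lambda_\Gamma(f_0)$.

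The stability clause then forces $\Gamma(g)=\Gamma(f_0)$. But $\Gamma(f_0)=\int_0^1 0=0$, whereas $\Gamma(g)=\int_0^1 g>0$, because $g$ is continuous, nonnegative, and positive somewhere. This contradiction gives $\SCIG(\mathcal S_1)\ge 1$, and together with the upper bound, $\SCIG(\mathcal S_1)=1$.

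There is no real obstacle. The only points needing care are the elementary facts used above: the finite query set misses an open subinterval, and a nonzero nonnegative continuous function has positive integral. An alternative route would construct a $\leq_{G,\mathrm{fq}}$-reduction from $\mathcal P_\exists$ and invoke \cref{thm:SCI-monotonicity-fq-reduction}. The decoder would have to map $\mathbb R$ continuously onto $\{0,1\}$, which is impossible, so that route fails. The direct height-$0$ fooling argument is therefore the one to use.
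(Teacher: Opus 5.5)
Your proposal is correct and follows the paper's proof essentially step for step. You take the upper bound from \cref{lem:interval-integration-upper}, then run a height-$0$ fooling argument: evaluate $\Gamma$ at the zero function, place a tent function in a gap of its finite query set, and use the stability clause of \cref{def:GenAlgoSCI}(i) to force an integral of $0$ for a function whose integral is positive (the paper just computes it explicitly as $(v-u)/2$).
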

\begin{proof}
It remains to prove that \(\mathrm{SCI}_G(S_1)\neq 0\) by \cref{lem:interval-integration-upper}.
Assume, toward a contradiction, that \(\SCIG(\mathcal S_1)=0\).
Then by \cref{def:GenAlgoSCI} there exists a general algorithm $(\Gamma,\Lambda_\Gamma)$ on \(\mathcal S_1\) such that $\Gamma=\Xi_{[0,1]}.$

Let
\[
f_0:[0,1]\to\mathbb R, \qquad f_0(x):=0.
\]
Write
\[
\Lambda_\Gamma(f_0) = \{\operatorname{ev}_{x_1},\ldots,\operatorname{ev}_{x_m}\}
\]
with \(x_1,\ldots,x_m\in[0,1]\). Since the set \(\{x_1,\ldots,x_m\}\) is finite, there exist numbers \(u,v\) with
\[
0<u<v<1, \qquad [u,v]\cap\{x_1,\ldots,x_m\}=\emptyset.
\]
Define
\[
h:[0,1]\to\mathbb R, \qquad
h(x):= \max \left\{ 1-\frac{2}{v-u}\left|x-\frac{u+v}{2}\right|, \,0 \right\}.
\]
Then \(h\in C([0,1],\mathbb R)\), and since \(h(x)=0\) for every \(x\notin [u,v]\), one has
\[
h(x_j)=0=f_0(x_j) \qquad (j=1,\ldots,m).
\]
Hence
\[
\eta(h)=\eta(f_0) \qquad\text{for all }\eta\in\Lambda_\Gamma(f_0).
\]
Because \((\Gamma,\Lambda_\Gamma)\) is a general algorithm, it follows that
\[
\Gamma(h)=\Gamma(f_0).
\]
Using \(\Gamma=\Xi_{[0,1]}\), we obtain
\[
\int_0^1 h(x)\,dx =
\int_0^1 f_0(x)\,dx = 0.
\]
On the other hand,
\begin{align*}
	\int_0^1 h(x)\,dx
	&= \int_u^v h(x)\,dx \\
	&= 2\int_0^{(v-u)/2} \left(1-\frac{2t}{v-u}\right)\,dt \\
	&= 2\left[ t-\frac{t^2}{v-u} \right]_{0}^{(v-u)/2} \\
	&= 2\left( \frac{v-u}{2}-\frac{v-u}{4} \right) \\
	&= \frac{v-u}{2} > 0,
\end{align*}
a contradiction.

Therefore \(\SCIG(\mathcal S_1)\ge 1\).
\end{proof}

The bump-function obstruction in \cref{prop:interval-integration-source} is the exact-computation analogue of the basic indistinguishability phenomenon in information-based
complexity: finitely many function values do not determine the value of the integral on the full class $C([0,1],\mathbb R)$; compare the general framework in \cite[Ch.~3-4]{TraubWasilkowskiWozniakowski88}.

\begin{proposition}[Family-wide finite-query evaluation reduction from the unit interval]\label{prop:interval-integration-reduction}
For every \(I=[a,b]\in\mathcal I\),
\[
\mathcal S_1\le_{G,\mathrm{fq}} \mathcal P_I^{\mathrm{int}}.
\]
\end{proposition}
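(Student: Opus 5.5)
We verify the three clauses of \cref{def:fq-eval-reduction} with the affine change of variables between \([0,1]\) and \(I=[a,b]\). Let \(\phi_I:I\to[0,1]\), \(\phi_I(x):=(x-a)/(b-a)\), a homeomorphism. The encoding is
\[
E:C([0,1],\mathbb R)\to C(I,\mathbb R),\qquad E(g):=g\circ\phi_I,
\]
which is well defined because compositions of continuous maps are continuous. The decoder is the rescaling
\[
D:(\mathbb R,|\cdot|)\to(\mathbb R,|\cdot|),\qquad D(t):=\frac{t}{b-a},
\]
a Lipschitz, hence continuous, map, as \cref{def:fq-eval-reduction} requires.

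Next we check the target identity \(\Psi(g)=D(\Xi_I(E(g)))\), where \(\Psi=\Xi_{[0,1]}\). The substitution \(s=\phi_I(x)\), \(dx=(b-a)\,ds\), for Riemann integrals of continuous functions gives
\[
\int_a^b g(\phi_I(x))\,dx=(b-a)\int_0^1 g(s)\,ds.
\]
Hence \(D(\Xi_I(E(g)))=\int_0^1 g\), as required.

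Finally, we give the query transport. Each target query \(\operatorname{ev}_x\in\Lambda_I\), \(x\in I\), is reconstructed from exactly one source query. Set \(m_{\operatorname{ev}_x}:=1\), \(\gamma_{\operatorname{ev}_x,1}:=\operatorname{ev}_{\phi_I(x)}\in\Lambda_{[0,1]}\), and \(\vartheta_{\operatorname{ev}_x}(z):=z\) on \(\operatorname{im}(\operatorname{ev}_{\phi_I(x)})=\mathbb R\). Then
\[
\operatorname{ev}_x(E(g))=g(\phi_I(x))=\vartheta_{\operatorname{ev}_x}\bigl(\gamma_{\operatorname{ev}_x,1}(g)\bigr),
\]
which completes the verification.

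There is no genuine obstacle here. The only points needing care are choosing the decoder as division by \(b-a\), so that it is continuous and does not depend on the input, and noting that \(\phi_I(x)\in[0,1]\), so that the transported query really lies in \(\Lambda_{[0,1]}\). Together with \cref{prop:interval-integration-source} and \cref{lem:interval-integration-upper}, this feeds directly into \cref{cor:concrete-sufficiency-package}.
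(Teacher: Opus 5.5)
Your proof is correct and uses the same affine change of variables as the paper. The only difference is where the factor \(1/(b-a)\) goes. You put it in the decoder \(D(t)=t/(b-a)\), with \(E(g)=g\circ\phi_I\) and trivial \(\vartheta_{\operatorname{ev}_x}(z)=z\). The paper instead puts it in the encoding \(E_I f=\frac{1}{b-a}\,f\circ\phi_I\), takes \(\vartheta_{\operatorname{ev}_x}(z)=z/(b-a)\), and can then use the identity decoder. The paper's choice has a small side benefit: the identity decoder lies in every admissible decoder class, so its reduction gives \(\mathcal S_1\leq^{\mathscr R}_{\mathrm{fq}}\mathcal P_I^{\mathrm{int}}\) for every admissible \(\mathscr R\), not only for continuous decoders.
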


\begin{proof}
Fix \(I=[a,b]\in\mathcal I\). Define the encoding map
\[
E_I:\Omega_{[0,1]}\to \Omega_I, \qquad (E_I f)(x):=\frac{1}{b-a}\, f \left(\frac{x-a}{b-a} \right).
\]
Since \(f\in C([0,1],\mathbb R)\) and the affine map \(x\mapsto \frac{x-a}{b-a}\) is continuous from \(I\) to \([0,1]\),
one has \(E_I f\in C(I,\mathbb R)=\Omega_I\).

Now define the decoding map
\[
D_I:\mathbb R\to\mathbb R, \qquad D_I(y):=y.
\]

Now let \(f\in\Omega_{[0,1]}\). By the change-of-variable theorem for the Riemann-Stieltjes integral \cite[Thm.~6.19]{Rudin76} and using the affine substitution
\[
t=\frac{x-a}{b-a}, \qquad x=a+(b-a)t
\]
we have
\begin{align*}
	D_I \bigl(\Xi_I(E_I f) \bigr)
	&= \int_a^b \frac{1}{b-a}\, f \left(\frac{x-a}{b-a} \right)\,dx \\
	&= \int_0^1 f(t)\,dt \\
	&= \Xi_{[0,1]}(f).
\end{align*}
Thus the target relation required in \cref{def:fq-eval-reduction} holds.

It remains to verify the finite-query simulation of evaluations. Fix \(x\in I\), and consider the evaluation
\[
\operatorname{ev}_x\in \Lambda_I.
\]
Set
\[
m_{\operatorname{ev}_x}:=1, \qquad \gamma_{\operatorname{ev}_x,1} := \operatorname{ev}_{(x-a)/(b-a)} \in \Lambda_{[0,1]}.
\]
Define further
\[
\vartheta_{\operatorname{ev}_x}:
\operatorname{im}(\gamma_{\operatorname{ev}_x,1})\to \mathbb C, \qquad \vartheta_{\operatorname{ev}_x}(z):=\frac{1}{b-a}z.
\]
Then for every \(f\in\Omega_{[0,1]}\),
\begin{align*}
	\operatorname{ev}_x(E_I f)
	&= (E_I f)(x) \\
	&= \frac{1}{b-a}\, f \left(\frac{x-a}{b-a} \right) \\
	&= \vartheta_{\operatorname{ev}_x} \Bigl( \gamma_{\operatorname{ev}_x,1}(f) \Bigr).
\end{align*}
Hence each evaluation in \(\Lambda_I\) is simulated from exactly one source evaluation in \(\Lambda_{[0,1]}\).

All clauses of \cref{def:fq-eval-reduction} are therefore satisfied, and so
\[
\mathcal S_1\le_{G,\mathrm{fq}} \mathcal P_I^{\mathrm{int}}.
\qedhere
\]
\end{proof}

\begin{theorem}[Family-pointwise exactness at height \(1\) for interval integration]\label{thm:interval-integration-family-exact}
The family \(\mathcal F^{\mathrm{int}}\) is family-pointwise exact at height \(1\). Equivalently,
\[
\forall I\in\mathcal I, \qquad
\SCIG \bigl(\mathcal P_I^{\mathrm{int}} \bigr)=1.
\]
\end{theorem}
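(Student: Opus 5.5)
The plan is to assemble the three ingredients already established and feed them into \cref{cor:concrete-sufficiency-package} with \(k=1\) and source problem \(\mathcal S_1=\mathcal P_{[0,1]}^{\mathrm{int}}\).

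First, (C1) is exactly \cref{prop:interval-integration-source}, which gives \(\SCIG(\mathcal S_1)=1\). Second, (C2) is the family-wide reduction \(\mathcal S_1\le_{G,\mathrm{fq}}\mathcal P_I^{\mathrm{int}}\) for every \(I\in\mathcal I\), which is \cref{prop:interval-integration-reduction}. There the decoder \(D_I=\mathrm{id}_{\mathbb R}\) is continuous, as \cref{def:fq-eval-reduction} requires. Third, (C3), namely \(\mathrm{UB}_1(\mathcal F^{\mathrm{int}})\), is \cref{lem:interval-integration-upper}. Then \cref{cor:concrete-sufficiency-package} yields \(\SCIG(\mathcal P_I^{\mathrm{int}})=1\) for every \(I\in\mathcal I\), which is the claimed family-pointwise exactness.

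For transparency, I will also spell out the lower bound directly. Combining \cref{thm:SCI-monotonicity-fq-reduction} with \cref{prop:interval-integration-reduction} gives
\[
1=\SCIG(\mathcal S_1)\le \SCIG(\mathcal P_I^{\mathrm{int}})\le 1 .
\]
One could alternatively invoke \cref{cor:transport-saturation-suffices} with the singleton basis \(\mathcal B=\{\mathcal S_1\}\), as noted in \cref{rem:principal-source-special-case}.

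There is no real obstacle at this stage, since all the substantive work sits in the three earlier results. The only point to check is that the hypotheses match exactly. The reduction must be \(\le_{G,\mathrm{fq}}\), i.e.\ the continuous-decoder instance, and the index \(k=1\in\mathbb N\) must be in the range where the package applies. Both hold.
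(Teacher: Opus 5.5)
Your proposal is correct and follows the paper's proof essentially verbatim. Both verify (C1)--(C3) of \cref{cor:concrete-sufficiency-package} with $k=1$ and $\mathcal S_1=\mathcal P_{[0,1]}^{\mathrm{int}}$, using \cref{prop:interval-integration-source}, \cref{prop:interval-integration-reduction}, and \cref{lem:interval-integration-upper}; your explicit sandwich $1\le \SCIG(\mathcal P_I^{\mathrm{int}})\le 1$ via \cref{thm:SCI-monotonicity-fq-reduction} is just that corollary unpacked.
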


\begin{proof}
Let
\[
\mathcal S_1=\mathcal P_{[0,1]}^{\mathrm{int}}.
\]
By \cref{prop:interval-integration-source} $\SCIG(\mathcal S_1)=1$ and by \cref{lem:interval-integration-upper}, for every \(I\in\mathcal I\) $\SCIG \bigl(\mathcal P_I^{\mathrm{int}} \bigr)\le 1.$
Thus \(\mathsf{UB}_1(\mathcal F^{\mathrm{int}})\) holds.

Moreover, \cref{prop:interval-integration-reduction} shows that for every \(I\in\mathcal I\),
\[
\mathcal S_1\le_{G,\mathrm{fq}} \mathcal P_I^{\mathrm{int}}.
\]
Hence the conditions \((C1)\) - \((C3)\) of \cref{cor:concrete-sufficiency-package} are satisfied with
\[
k=1, \qquad
\mathcal F= \mathcal F^{\mathrm{int}}, \qquad \mathcal S_1= \mathcal P_{[0,1]}^{\mathrm{int}}.
\]
Therefore \cref{cor:concrete-sufficiency-package} yields family-pointwise exactness at height \(1\).
\end{proof}

The family \(\mathcal F^{\mathrm{int}}\) already gives a direct positive application of \cref{cor:concrete-sufficiency-package}. To obtain an ambient converse in the sense of \cref{sec:decoder-regular-transport}, it is natural to enlarge this family by adjoining the degenerate intervals \([a,a]\). These form a canonical lower-height layer and thereby isolate the exact level-\(1\) layer inside a single ambient class.

\begin{definition}[The interval ambient class with degenerate members]\label{def:degenerate-interval-ambient}
Define
\[
\overline{\mathcal I}:=\{[a,b]\subseteq \mathbb R:a\le b\},
\qquad
\mathcal I_+:=\{[a,b]\subseteq \mathbb R:a<b\},
\qquad
\mathcal I_0:=\{[a,a]:a\in\mathbb R\}.
\]
For each \(I=[a,b]\in\overline{\mathcal I}\), let
\[
\mathcal P_I^{\mathrm{int}} :=
\bigl(\Xi_I,\Omega_I,(\mathbb R,|\cdot|),\Lambda_I \bigr),
\]
where \(\Omega_I,\Lambda_I,\Xi_I\) are the exact point-evaluation integration data from \cref{def:interval-integration}. Define
\[
\mathcal U_{\mathrm{int}}:=\{ \mathcal P_I^{\mathrm{int}}:I\in\overline{\mathcal I}\}, \quad \mathcal S_1:= \mathcal P_{[0,1]}^{\mathrm{int}}.
\]
\end{definition}

\begin{lemma}[Degenerate intervals have exact height \(0\)]\label{lem:degenerate-interval-height-zero}
For every \(a\in \mathbb R\),
\[
\SCIG \bigl( \mathcal P_{[a,a]}^{\mathrm{int}} \bigr)=0.
\]
\end{lemma}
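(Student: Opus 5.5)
The plan is to exhibit a single height-\(0\) general algorithm, i.e.\ a finite-query factorisation, and then apply \cref{thm:FiniteQueryColl}. For \(I=[a,a]\) the input class is \(\Omega_{[a,a]}=C(\{a\},\mathbb R)\), so every input \(f\) is determined by its single value \(f(a)\). The evaluation family is \(\Lambda_{[a,a]}=\{\operatorname{ev}_a\}\), and the target is \(\Xi_{[a,a]}(f)=\int_a^a f(x)\,dx=0\) for every \(f\). Before anything else I would note that \cref{def:interval-integration} was stated for \(a<b\), so I have to check that the data still form a computational problem when \(a=b\). This is immediate: \(\Xi_{[a,a]}\) is constant, so the consistency condition of \cref{def:SCIcompProb} holds vacuously.

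Next I would define the algorithm directly, mirroring \cref{lem:P0-height-zero}:
\[
\Gamma(f):=0,\qquad \Lambda_\Gamma(f):=\{\operatorname{ev}_a\}\qquad (f\in\Omega_{[a,a]}).
\]
Both \(\Gamma\) and \(\Lambda_\Gamma\) are constant in \(f\), so the stability condition in \cref{def:GenAlgoSCI}(i) holds trivially. The query set is finite and nonempty. Moreover \(\Gamma=\Xi_{[a,a]}\), since a Riemann integral over a degenerate interval vanishes. This gives \(\SCIG(\mathcal P_{[a,a]}^{\mathrm{int}})\le 0\), and since \(\SCIG\) takes values in \(\mathbb N_0\cup\{\infty\}\), equality follows.

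Alternatively, I could write \(\Xi_{[a,a]}(f)=G(\operatorname{ev}_a(f))\) with \(G\equiv 0\) on \(\operatorname{im}(\operatorname{ev}_a)\) and invoke \cref{thm:FiniteQueryColl}. This route needs \(\Lambda_{[a,a]}\ne\varnothing\), which holds.

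I expect no real obstacle here. The only delicate point is conventions: the claim \(\int_a^a f=0\) needs a sentence justifying the degenerate Riemann integral, either by defining it as \(0\) or because every partition sum vanishes. The other thing to check is that the nonempty-query requirement is satisfied, which is why I use \(\operatorname{ev}_a\) rather than an empty query set.
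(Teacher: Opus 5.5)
Your proposal is correct and is essentially the paper's proof. The paper also takes the constant algorithm $\Gamma_I(f):=0$ with the constant query set $\Lambda_{\Gamma_I}(f):=\{\mathrm{ev}_a\}$, notes that the stability condition is automatic because both maps are constant, and concludes from $\Gamma_I=\Xi_I$; your explicit consistency check for $a=b$ and the alternative route through \cref{thm:FiniteQueryColl} are harmless additions.
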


\begin{proof}
Let $I:=[a,a]$. Then
\[
\Omega_I=C(\{a\},\mathbb R), \quad \Lambda_I=\{\mathrm{ev}_a\}, \quad \Xi_I(f)=\int_a^a f(x)\,dx=0 \quad (f\in \Omega_I).
\]
So the target map is constant. Define
\[
\Gamma_I:\Omega_I\to \mathbb R, \quad \Gamma_I(f):=0,
\]
and define the finite-query map
\[
\Lambda_{\Gamma_I}(f):=\{\mathrm{ev}_a\} \quad (f\in \Omega_I).
\]
Since \(\Gamma_I\) is constant and \(\Lambda_{\Gamma_I}\) is constant, the defining condition for a general algorithm from \cref{def:GenAlgoSCI}(i) is immediate: if
\[
\eta(g)=\eta(f) \quad
\forall \eta\in \Lambda_{\Gamma_I}(f),
\]
then certainly
\[
\Gamma_I(g)=0=\Gamma_I(f) \quad\text{and}\quad \Lambda_{\Gamma_I}(g)=\{\mathrm{ev}_a\}=\Lambda_{\Gamma_I}(f).
\]
Thus \((\Gamma_I,\Lambda_{\Gamma_I})\) is a general algorithm on \(\mathcal P_I^{\mathrm{int}}\). Since also $\Gamma_I=\Xi_I$, this is a height-\(0\) type-\(G\) tower computing \(\Xi_I\). Therefore
\[
\SCIG \bigl(\mathcal P_{[a,a]}^{\mathrm{int}} \bigr)=0.
\]
\end{proof}

\begin{theorem}[A principal ambient converse on the interval family]\label{thm:interval-principal-converse}
For every \(I=[a,b]\in\overline{\mathcal I}\),
\[
\SCIG \bigl( \mathcal P_I^{\mathrm{int}} \bigr)=1 \iff
\mathcal S_1\le_{G,\mathrm{fq}} \mathcal P_I^{\mathrm{int}}.
\]
Equivalently,
\[
\mathcal \mathcal O_1(\mathcal U_{\mathrm{int}}) = \uparrow_{\le_{G,\mathrm{fq}}}^{\,\mathcal U_{\mathrm{int}}}(\mathcal S_1) = \{ \mathcal P_I^{\mathrm{int}}: I\in \mathcal I_+\}.
\]
\end{theorem}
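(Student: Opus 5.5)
The plan is to split $\overline{\mathcal I}=\mathcal I_+\sqcup\mathcal I_0$ and show that both sides of the claimed equivalence are true exactly on $\mathcal I_+$. The set identity then follows from the pointwise equivalence together with the definitions of $\mathcal O_1(\mathcal U_{\mathrm{int}})$ and the upper cone in \cref{def:exact-layer-basis-principal-upper-cone}.

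\emph{Nondegenerate intervals.} For $I\in\mathcal I_+$, \cref{thm:interval-integration-family-exact} gives $\SCIG(\mathcal P_I^{\mathrm{int}})=1$, and \cref{prop:interval-integration-reduction} gives $\mathcal S_1\le_{G,\mathrm{fq}}\mathcal P_I^{\mathrm{int}}$. So both sides of the equivalence hold on $\mathcal I_+$.

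\emph{Degenerate intervals.} For $I=[a,a]\in\mathcal I_0$, \cref{lem:degenerate-interval-height-zero} gives $\SCIG(\mathcal P_I^{\mathrm{int}})=0\neq1$, so the left side fails. It remains to show that the right side fails as well. Suppose $\mathcal S_1\le_{G,\mathrm{fq}}\mathcal P_{[a,a]}^{\mathrm{int}}$. Then \cref{thm:SCI-monotonicity-fq-reduction} yields $1=\SCIG(\mathcal S_1)\le\SCIG(\mathcal P_{[a,a]}^{\mathrm{int}})=0$, which is a contradiction; here \cref{prop:interval-integration-source} supplies $\SCIG(\mathcal S_1)=1$. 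This settles the equivalence for every $I\in\overline{\mathcal I}$.

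Alternatively, the degenerate case can be checked directly. The decoder $D$ is applied to the constant target $\Xi_{[a,a]}\equiv0$, so $\Psi=D(0)$ would be constant. This contradicts the fact that $\int_0^1 f$ takes the distinct values $0$ and $1$ on the constant functions $0$ and $1$.

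\emph{Set identity.} By definition, $\mathcal O_1(\mathcal U_{\mathrm{int}})=\{\mathcal P_I^{\mathrm{int}}:\SCIG(\mathcal P_I^{\mathrm{int}})=1\}$ and $\uparrow^{\mathcal U_{\mathrm{int}}}_{\le_{G,\mathrm{fq}}}(\mathcal S_1)=\{\mathcal P_I^{\mathrm{int}}:\mathcal S_1\le_{G,\mathrm{fq}}\mathcal P_I^{\mathrm{int}}\}$. The equivalence shows these two sets coincide, and the case analysis identifies both with $\{\mathcal P_I^{\mathrm{int}}:I\in\mathcal I_+\}$. Also $\mathcal S_1\in\mathcal U_{\mathrm{int}}$, so the exact layer is principal in the sense of \cref{def:exact-layer-basis-principal-upper-cone}.

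One bookkeeping point needs care. The problems $\mathcal P_I^{\mathrm{int}}$ must be distinct for distinct $I$, or at least the indexing must be compatible with the membership statements, so that reading off membership is sound. This holds because distinct intervals give distinct input classes $\Omega_I$. I expect no genuine obstacle beyond this: all the substantive work is already contained in the cited results.
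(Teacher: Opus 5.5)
Your proposal is correct and essentially matches the paper's proof. It splits $\overline{\mathcal I}$ into $\mathcal I_+$ and $\mathcal I_0$ and uses the same ingredients: \cref{lem:degenerate-interval-height-zero}, \cref{prop:interval-integration-reduction}, and monotonicity (\cref{thm:SCI-monotonicity-fq-reduction}) together with \cref{prop:interval-integration-source}. The paper organizes the argument by the two implications rather than by cases; your direct decoder argument for the degenerate case is a valid extra, and your distinctness bookkeeping is harmless but unnecessary, since the pointwise equivalence already gives the set identities.
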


\begin{proof}
We prove the two implications.

\smallskip \noindent\textbf{(\(\Rightarrow\)):} Assume
\[
\SCIG \bigl( \mathcal P_I^{\mathrm{int}} \bigr)=1.
\]
Then \(I\notin \mathcal I_0\), because \cref{lem:degenerate-interval-height-zero} gives exact height \(0\) for degenerate intervals. Hence \(I=[a,b]\in \mathcal I_+\), and \cref{prop:interval-integration-reduction} gives
\[
\mathcal S_1\le_{G,\mathrm{fq}} \mathcal P_I^{\mathrm{int}}.
\]

\smallskip \noindent\textbf{(\(\Leftarrow\)):}
Assume
\[
\mathcal S_1\le_{G,\mathrm{fq}} \mathcal P_I^{\mathrm{int}}.
\]
By \cref{thm:SCI-monotonicity-fq-reduction},
\[
\SCIG (\mathcal S_1)\le \SCIG \bigl( \mathcal P_I^{\mathrm{int}} \bigr).
\]
By \cref{prop:interval-integration-source} $\SCIG (\mathcal S_1)=1$, so
\[
1\le \SCIG \bigl( \mathcal P_I^{\mathrm{int}} \bigr).
\]
If \(I\in \mathcal I_+\), then \cref{lem:interval-integration-upper} gives
\[
\SCIG \bigl( \mathcal P_I^{\mathrm{int}} \bigr)\le 1.
\]
If \(I\in \mathcal I_0\), then \cref{lem:degenerate-interval-height-zero} gives
\[
\SCIG \bigl( \mathcal P_I^{\mathrm{int}} \bigr)=0,
\]
which is incompatible with \(1\le \SCIG (\mathcal P_I^{\mathrm{int}})\). So the case \(I\in \mathcal I_0\) is impossible. Therefore necessarily \(I\in \mathcal I_+\), and in that case
\[
\SCIG \bigl(\mathcal P_I^{\mathrm{int}} \bigr)\le 1.
\]
Combining with the lower bound yields
\[
\SCIG \bigl( \mathcal P_I^{\mathrm{int}} \bigr)=1.
\]
The set-theoretic reformulation is exactly the same equivalence written inside the ambient class \(\mathcal U_{\mathrm{int}}\).
\end{proof}

\begin{corollary}[Family criterion on the interval ambient class]\label{cor:interval-ambient-family-criterion}
For every nonempty family \(\mathcal F\subseteq \mathcal U_{\mathrm{int}}\),
\[
\mathcal F \text{ is family-pointwise exact at height } 1 \iff
\forall \mathcal P\in \mathcal F,\ \mathcal S_1\le_{G,\mathrm{fq}} \mathcal P.
\]
Moreover,
\[
\mathcal F \text{ is witness-space sharp at height }1 \iff
\exists \mathcal P\in \mathcal F,\ \mathcal S_1\le_{G,\mathrm{fq}} \mathcal P.
\]
\end{corollary}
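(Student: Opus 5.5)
The plan is to obtain this corollary as a direct instance of \cref{prop:principal-ambient-criterion}, applied with \(\mathcal U=\mathcal U_{\mathrm{int}}\), \(k=1\) and source \(\mathcal S_1=\mathcal P_{[0,1]}^{\mathrm{int}}\). That proposition has three hypotheses, and I will check each of them in turn.

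\textbf{Step 1: ambient upper bound.} We need \(\SCIG(\mathcal P)\le 1\) for every \(\mathcal P\in\mathcal U_{\mathrm{int}}\). Split according to \(\overline{\mathcal I}=\mathcal I_+\cup\mathcal I_0\):
\begin{itemize}
\item for \(I\in\mathcal I_+\), \cref{lem:interval-integration-upper} gives \(\SCIG(\mathcal P_I^{\mathrm{int}})\le 1\);
\item for \(I\in\mathcal I_0\), \cref{lem:degenerate-interval-height-zero} gives \(\SCIG(\mathcal P_I^{\mathrm{int}})=0\le 1\).
\end{itemize}

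\textbf{Step 2: source exactness and membership.} Since \([0,1]\in\mathcal I_+\subseteq\overline{\mathcal I}\), we have \(\mathcal S_1\in\mathcal U_{\mathrm{int}}\). Moreover \(\SCIG(\mathcal S_1)=1\) by \cref{prop:interval-integration-source}.

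\textbf{Step 3: principality.} The identity \(\mathcal O_1(\mathcal U_{\mathrm{int}})=\uparrow_{\le_{G,\mathrm{fq}}}^{\,\mathcal U_{\mathrm{int}}}(\mathcal S_1)\) is exactly the set-theoretic form of \cref{thm:interval-principal-converse}.

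With these hypotheses in place, both equivalences of the corollary (the family-pointwise one and the witness-space one) follow verbatim from \cref{prop:principal-ambient-criterion} for any nonempty \(\mathcal F\subseteq\mathcal U_{\mathrm{int}}\).

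No step is genuinely hard, since all the substantive work sits in \cref{thm:interval-principal-converse}. The only point needing care is bookkeeping. A family \(\mathcal F\subseteq\mathcal U_{\mathrm{int}}\) should be read as an indexed family whose members lie in \(\mathcal U_{\mathrm{int}}\), following the convention on parametrized sets. Under that reading the quantifiers over \(\mathcal P\in\mathcal F\) in \cref{prop:principal-ambient-criterion} agree with the index-based definitions in \cref{def:sharpNot}. I would add a one-line remark recording this identification.
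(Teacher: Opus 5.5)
Your proposal is correct and follows essentially the same route as the paper: you apply \cref{prop:principal-ambient-criterion} with \(\mathcal U=\mathcal U_{\mathrm{int}}\) and \(k=1\), get the ambient upper bound from \cref{lem:interval-integration-upper} and \cref{lem:degenerate-interval-height-zero}, and take principality from \cref{thm:interval-principal-converse}. Your explicit checks that \(\mathcal S_1\in\mathcal U_{\mathrm{int}}\) with \(\SCIG(\mathcal S_1)=1\), and your remark on indexed families, spell out points the paper leaves implicit.
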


\begin{proof}
The equivalence
\[
\mathcal \mathcal O_1(\mathcal U_{\mathrm{int}}) = \uparrow_{\le_{G,\mathrm{fq}}}^{\,\mathcal U_{\mathrm{int}}}(\mathcal S_1)
\]
is exactly \cref{thm:interval-principal-converse}. Since every member of \(\mathcal U_{\mathrm{int}}\) has SCI \(\le 1\) by \cref{lem:interval-integration-upper} and \cref{lem:degenerate-interval-height-zero}, \cref{prop:principal-ambient-criterion} applies with \(\mathcal U= \mathcal U_{\mathrm{int}}\) and \(k=1\), and yields the two equivalences.
\end{proof}

\subsection{A Spectral-Decision Family From Block-Diagonal Stabilization}\label{sec:block-diagonal-stabilization}

We next give a natural example coming directly from the spectral SCI literature. To keep the source classification completely rigorous at exactly the point needed here, we work with singleton windows inside a fixed compact real interval and with the diagonal operator subclass from \cite[\S3.2.1]{ColbrookHansen22}.
This is the fixed-window version covered directly by \cite[Thm.~3.10 and Rem.~3.11]{ColbrookHansen22}.

Fix a nonempty compact interval $J\subset \mathbb R$. We will use in this subsection the standard spectral notation $\rho(T), \sigma(T)$ for the resolvent set and spectrum of an (possibly unbounded) operator $T$. Let $H:=\ell^2(\mathbb N)$ with canonical basis $(e_n)_{n\in\mathbb N}$, and let
\[
d_{\mathrm{disc}}(u,v):=
\begin{cases}
0,&u=v,\\
1,&u\neq v
\end{cases}
\qquad (u,v\in\{0,1\}).
\]

Let \(\mathcal M_H\) denote the hyperspace of nonempty compact subsets of \(\mathbb C\), equipped with the Hausdorff metric \(d_H\). We write
\[
\mathcal K_{\mathrm{sgl}}(J):=\bigl\{\{z\}:z\in J\bigr\}
\subseteq \mathcal M_H
\]
for the class of singleton compact windows contained in $J$.

For $K=\{z\}\in \mathcal K_{\mathrm{sgl}}(J)$ and $n\in\mathbb N$, define
\[
r_n(K):=2^{-(n+2)}\Bigl\lfloor 2^{n+2}z\Bigr\rfloor \in \mathbb Q,
\qquad
K_n(K):=\{r_n(K)\}.
\]
Then
\[
d_H\!\bigl(K_n(K),K\bigr)=|r_n(K)-z|<2^{-(n+2)}.
\]

Let
\[
\Omega_{\mathrm{diag}}:=\Omega_D
\]
be the diagonal operator class from \cite[\S3.2.1]{ColbrookHansen22}.
For $(A,K)\in \Omega_{\mathrm{diag}}\times \mathcal K_{\mathrm{sgl}}(J)$, define
\[
\mu_{i,j}(A,K):=\langle A e_j,e_i\rangle
\qquad (i,j\in\mathbb N),
\]
and
\[
\rho_n(A,K):=r_n(K)
\qquad (n\in\mathbb N).
\]
Set
\[
\Lambda_J^{\mathrm{sgl}}
:=
\{\mu_{i,j}:i,j\in\mathbb N\}\cup\{\rho_n:n\in\mathbb N\}.
\]

Define the target map
\[
\Xi_J^{\mathrm{sgl}}:
\Omega_{\mathrm{diag}}\times \mathcal K_{\mathrm{sgl}}(J)\to \{0,1\},
\qquad
\Xi_J^{\mathrm{sgl}}(A,K):=
\mathbf 1_{\{\sigma(A)\cap K=\varnothing \}}.
\]
We therefore obtain the computational problem
\[
\mathcal S_J^{\mathrm{sgl}} :=
\Bigl(
\Xi_J^{\mathrm{sgl}},
\Omega_{\mathrm{diag}}\times \mathcal K_{\mathrm{sgl}}(J),
(\{0,1\},d_{\mathrm{disc}}),
\Lambda_J^{\mathrm{sgl}}
\Bigr).
\]

\begin{remark}
The consistency requirement from \cref{def:SCIcompProb} holds for
$\mathcal S_J^{\mathrm{sgl}}$.
Indeed, the matrix elements $\mu_{i,j}$ determine the diagonal operator
$A\in\Omega_{\mathrm{diag}}$, and the sequence $(\rho_n(K))_{n\in\mathbb N}$
determines the singleton $K=\{z\}$ because $\rho_n(K)\to z$.
\end{remark}

We use in the next proposition the hierarchy notation \(\Delta^G_m\), \(\Delta^A_m\), and \(\Pi^A_m\) from \cite[§2.1 and App.~A, Def.~A.10]{ColbrookHansen22}. In particular, membership in \(\Delta^G_{m+1}\) means solvability by a type-G tower of height \(m\).

\begin{proposition}[Exact type-$G$ height of the singleton-window source problem]\label{prop:singleton-window-source}
One has
\[
\SCIG \bigl(\mathcal S_J^{\mathrm{sgl}} \bigr)=2.
\]
\end{proposition}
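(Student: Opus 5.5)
The statement $\SCIG(\mathcal S_J^{\mathrm{sgl}})=2$ splits into an upper bound and a lower bound. For both I will lean on the fixed-window classification in \cite[Thm.~3.10 and Rem.~3.11]{ColbrookHansen22}, together with the dictionary already used in \cref{prop:natural-nonprincipal-ambient-height-three}: by \cite[(2.1)]{ColbrookHansen22} and \cite[App.~A, Def.~A.10]{ColbrookHansen22}, membership in $\Delta^G_{m+1}$ means solvability by a type-$G$ tower of height $m$. So the plan is to show that the singleton-window problem lies in $\Delta^G_3$ but not in $\Delta^G_2$, and then translate.

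\textbf{Upper bound.} I will write an explicit height-two tower rather than only quote the literature, since it is elementary on diagonal operators. Write $A=\mathrm{diag}(a_1,a_2,\dots)$, recovered via $\mu_{j,j}$, and $K=\{z\}$.

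For $(n_2,n_1)$, set $\Gamma_{n_2,n_1}(A,K):=1$ if $|a_j-r_{n_2}(K)|>2^{-n_2}$ for all $j\le n_1$, and $0$ otherwise. This reads only the finitely many queries $\mu_{j,j}$ ($j\le n_1$) and $\rho_{n_2}$. The query set is fixed given $(n_2,n_1)$, so the general-algorithm condition is immediate, exactly as in \cref{lem:interval-integration-upper}.

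The inner limit $n_1\to\infty$ is monotone in $n_1$, so it exists and equals $\mathbf 1\{\forall j:\ |a_j-r_{n_2}|>2^{-n_2}\}$.

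For the outer limit, recall $\sigma(A)=\overline{\{a_j\}}$ and $|r_{n_2}-z|<2^{-(n_2+2)}$.
\begin{itemize}
\item If $z\notin\sigma(A)$, then $\operatorname{dist}(z,\{a_j\})>0$. For large $n_2$ all $|a_j-r_{n_2}|>2^{-n_2}$, so the limit is $1$.
\item If $z\in\sigma(A)$, there is $j$ with $|a_j-z|<2^{-n_2-1}$. Then $|a_j-r_{n_2}|<2^{-n_2}$, so the value is $0$ for every $n_2$.
\end{itemize}
In both cases the iterated limit equals $\Xi_J^{\mathrm{sgl}}(A,K)$, giving $\SCIG\le 2$.

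\textbf{Lower bound.} This is the main obstacle: I must show there is no height-$1$ type-$G$ tower. The first route is to cite \cite[Thm.~3.10, Rem.~3.11]{ColbrookHansen22}, which place the diagonal fixed-window decision problem outside $\Delta^G_2$, and then check that their exact-input information (matrix entries plus dyadic codes of the window) matches $\Lambda_J^{\mathrm{sgl}}$.

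If that identification is shaky, I will give a direct argument: reduce the coordinate problem $\mathcal P_2^{\mathrm{mat}}$ from \cite[Sec.~9]{Sor26}, which has exact height $2$, to $\mathcal S_J^{\mathrm{sgl}}$ via $\le_{G,\mathrm{fq}}$, and apply \cref{thm:SCI-monotonicity-fq-reduction}.
\begin{itemize}
\item Fix $z\in J$.
\item Encode a $0/1$ matrix $(b_{i,j})$ as a diagonal operator whose $(i,j)$-th diagonal entry, placed at a bijectively enumerated position, is $z+2^{-i}$ if $b_{i,j}=1$ and some far-away value (e.g.\ $z+10$) otherwise.
\item Then $z\in\sigma(A)$ if and only if infinitely many rows contain a $1$.
\end{itemize}
With the decoder being a suitable flip, the target $\Xi=1$ corresponds to "only finitely many rows contain a $1$". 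I will adjust the encoding, for instance by complementation or by letting each row contribute a $1$ at most once, to match the $\Sigma_2/\Pi_2$ form of $\mathcal P_2^{\mathrm{mat}}$. The query simulation is one-to-one: each $\mu_{j,j}$ is a function of one matrix entry, and $\rho_n$ is constant. Because $z$ is fixed, the window evaluations are constants, which the finite-query definition permits.

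The delicate checks are twofold. First, the decision polarity of $\mathcal P_2^{\mathrm{mat}}$ must match the chosen encoding. Second, the encoded operators must lie in $\Omega_D$ (bounded diagonal), which they do by construction.
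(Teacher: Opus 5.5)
Your proposal is correct. For the lower bound your primary route is the paper's: the paper also gets $\mathcal S_J^{\mathrm{sgl}}\notin\Delta_2^G$ directly from \cite[Thm.~3.10 and Rem.~3.11]{ColbrookHansen22}. It first checks that diagonal operators have universal dispersion data ($f(n)=n$, $c_n=0$). It then checks that the dyadic codes $\rho_n$ are admissible window approximants, since $d_H(K_n(K),K)<2^{-(n+2)}<2^{-(n+1)}$.

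Where you genuinely differ is the upper bound. The paper reads off $\mathcal S_J^{\mathrm{sgl}}\in\Pi_2^A$ from the same theorem and concludes via $\Pi_2^A\subseteq\Delta_3^A\subseteq\Delta_3^G$. You instead write an explicit height-two tower with input-independent query sets $\{\mu_{j,j}:j\le n_1\}\cup\{\rho_{n_2}\}$. Your estimates check out. If $z\in\sigma(A)$, some $j$ has $|a_j-r_{n_2}|<2^{-n_2-1}+2^{-n_2-2}<2^{-n_2}$ for every $n_2$. If $z\notin\sigma(A)$, then $|a_j-r_{n_2}|\ge\operatorname{dist}(z,\sigma(A))-2^{-n_2-2}>2^{-n_2}$ for all $j$ once $n_2$ is large.

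What each route buys: yours makes $\SCIG\le 2$ self-contained and independent of matching the information model of \cite{ColbrookHansen22}. The paper's route is shorter and also records the finer placement $\Pi_2^A$. For the lower bound, both rest on the cited theorem.

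On your fallback reduction from $\mathcal P_2^{\mathrm{mat}}$: the adjustments you list (complementing entries, letting each row contribute once) keep the predicate of the form ``infinitely many rows have a finitely witnessed property''. So they will not in general match a matrix predicate of the form $\forall i\,\exists j$ or $\exists i\,\forall j$.

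The fix is to use the fact that one target query may depend on finitely many source entries. Put $z+2^{-N}$ at diagonal position $\pi(N,j)$ exactly when each of rows $1,\dots,N$ contains a $1$ among columns $\le j$, and put $z+10$ otherwise. Then $z\in\sigma(A)$ if and only if every row contains a $1$. Complementing entries and choosing the identity or the flip as decoder then covers all polarities. Since this is only a backup, it does not affect the correctness of your main argument.
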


\begin{proof}
By \cite[\S3.2.1]{ColbrookHansen22}, the class $\Omega_{\mathrm{diag}}$ is the diagonal
subclass of the graph/operator framework to which \cite[Thm.~3.10]{ColbrookHansen22} applies.
For diagonal operators, the bounded-dispersion data from \cite[(3.9)]{ColbrookHansen22} are universal: one may take
\[
f(n)=n,
\qquad
c_n=0
\qquad (n\in\mathbb N),
\]
because all off-diagonal matrix entries vanish identically.
Moreover, our evaluations $\rho_n$ provide exactly the compact-window approximants required in \cite[Thm.~3.10]{ColbrookHansen22}, and the restriction to
singleton windows is justified by \cite[Rem.~3.11]{ColbrookHansen22}, since
\[
d_H \bigl(K_n(K),K\bigr)<2^{-(n+2)}<2^{-(n+1)}.
\]

Hence \cite[Thm.~3.10 and Rem.~3.11]{ColbrookHansen22} apply directly to the present
singleton-window problem on the diagonal class, and yield
\[
\mathcal S_J^{\mathrm{sgl}}\notin \Delta_2^G
\qquad\text{and}\qquad
\mathcal S_J^{\mathrm{sgl}}\in \Pi_2^A.
\]
By the schematic hierarchy relation in \cite[(2.1)]{ColbrookHansen22},
\[
\Pi_2^A\subseteq \Delta_3^A.
\]
Every type-$A$ tower is, by definition, a particular type-$G$ tower, hence
\[
\mathcal S_J^{\mathrm{sgl}}\in \Delta_3^G.
\]
By the SCI-hierarchy definitions \cite[\S2.1 and App.~A, Def.~A.10]{ColbrookHansen22},
\[
\SCIG \bigl(\mathcal S_J^{\mathrm{sgl}} \bigr)=2.
\qedhere
\]
\end{proof}

Now let
\[
\mathcal B_J:=
\bigl\{B\in \Omega_{\mathrm{diag}}:\sigma(B)\cap J=\varnothing\bigr\}.
\]
This class is nonempty; for example, if $\lambda\in\mathbb R\setminus J$, then
$\lambda I\in \mathcal B_J$.

Fix $B\in \mathcal B_J$.
Let
\[
H^{(2)}:=H\oplus H,
\]
and write
\[
u_n^{(1)}:=(e_n,0), \quad
u_n^{(2)}:=(0,e_n) \quad (n\in\mathbb N)
\]
for the canonical basis of $H^{(2)}$ adapted to the direct sum decomposition.

Define the input class
\[
\Omega_{J,B}^{\mathrm{sgl}} :=
\bigl\{(A\oplus B,K): A\in \Omega_{\mathrm{diag}}, \ K\in \mathcal K_{\mathrm{sgl}}(J) \bigr\}.
\]
For $(T,K)\in \Omega_{J,B}^{\mathrm{sgl}}$ define
\[
\nu_{(i,r),(j,s)}^B(T,K) :=
\langle T u_j^{(s)},u_i^{(r)}\rangle \quad (i,j\in\mathbb N,\ r,s\in\{1,2\}),
\]
and
\[
\rho_n^B(T,K):=r_n(K) \quad (n\in\mathbb N).
\]
Set
\[
\Lambda_{J,B}^{\mathrm{sgl}} :=
\bigl\{ \nu_{(i,r),(j,s)}^B : i,j\in\mathbb N,\ r,s\in\{1,2\} \bigr\} \cup \{\rho_n^B:n\in\mathbb N\}.
\]
Define
\[
\Xi_{J,B}^{\mathrm{sgl}}: \Omega_{J,B}^{\mathrm{sgl}}\to \{0,1\},\quad
\Xi_{J,B}^{\mathrm{sgl}}(T,K) := \mathbf 1_{\{\sigma(T)\cap K=\varnothing\}}.
\]
Thus
\[
\mathcal P_{J,B}^{\mathrm{sgl}} :=
\Bigl( \Xi_{J,B}^{\mathrm{sgl}}, \Omega_{J,B}^{\mathrm{sgl}}, (\{0,1\},d_{\mathrm{disc}}), \Lambda_{J,B}^{\mathrm{sgl}} \Bigr)
\]
is a computational problem.

\begin{remark}
The consistency assumption from \cref{def:SCIcompProb} also holds for
$\mathcal P_{J,B}^{\mathrm{sgl}}$.
Indeed, the matrix elements $\nu_{(i,r),(j,s)}^B$ determine the operator $T=A\oplus B$, and the sequence $(\rho_n^B(T,K))_{n\in\mathbb N}$ determines the singleton $K$.
\end{remark}

\begin{proposition}[Two-sided finite-query transport under block-diagonal stabilization]\label{prop:block-diagonal-two-sided}
For every $B\in\mathcal B_J$, one has
\[
\mathcal S_J^{\mathrm{sgl}}\le_{G,\mathrm{fq}} \mathcal P_{J,B}^{\mathrm{sgl}} \quad\text{and}\quad
\mathcal P_{J,B}^{\mathrm{sgl}}\le_{G,\mathrm{fq}} \mathcal S_J^{\mathrm{sgl}}.
\]
\end{proposition}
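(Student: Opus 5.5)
Both reductions will be verified directly against \cref{def:fq-eval-reduction}, with the identity map on \((\{0,1\},d_{\mathrm{disc}})\) as the decoder in each case; it is trivially continuous. The key spectral fact is
\[
\sigma(A\oplus B)=\sigma(A)\cup\sigma(B).
\]
Since \(K\subseteq J\) and \(\sigma(B)\cap J=\varnothing\), this gives
\[
\sigma(A\oplus B)\cap K=\sigma(A)\cap K
\]
for every singleton window \(K\in\mathcal K_{\mathrm{sgl}}(J)\). I will prove the union identity first. For the inclusion \(\supseteq\): if \(\lambda\in\sigma(A)\), then \(A\oplus B-\lambda\) cannot have a bounded inverse, because restricting any inverse to the first summand would invert \(A-\lambda\). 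For the inclusion \(\subseteq\): if \(\lambda\in\rho(A)\cap\rho(B)\), then \((A-\lambda)^{-1}\oplus(B-\lambda)^{-1}\) is a bounded inverse.

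The one place needing care is that elements of \(\Omega_{\mathrm{diag}}\) may be unbounded diagonal operators. In that case I will argue at the level of closed diagonal operators: the spectrum is the closure of the set of diagonal entries, and \(A\oplus B\) is again diagonal in the basis \((u^{(r)}_n)\). Its diagonal sequence interleaves those of \(A\) and \(B\), so its spectrum is the closure of the union, which equals the union of the closures. This is the main, and still mild, obstacle.

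\textbf{Forward reduction} \(\mathcal S_J^{\mathrm{sgl}}\le_{G,\mathrm{fq}}\mathcal P_{J,B}^{\mathrm{sgl}}\). Take \(E(A,K):=(A\oplus B,K)\). The target relation holds by the spectral identity above. For the queries:
\begin{itemize}
\item \(\nu^B_{(i,1),(j,1)}(E(A,K))=\langle Ae_j,e_i\rangle=\mu_{i,j}(A,K)\), so take \(m=1\), \(\gamma=\mu_{i,j}\) and \(\vartheta=\mathrm{id}\).
\item The mixed entries \(\nu^B_{(i,1),(j,2)}\) and \(\nu^B_{(i,2),(j,1)}\) vanish identically, and \(\nu^B_{(i,2),(j,2)}=\langle Be_j,e_i\rangle\) is a constant because \(B\) is fixed. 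In these cases take \(m=1\), any source query, say \(\gamma=\rho_1\), and \(\vartheta\) the appropriate constant map.
\item \(\rho^B_n\circ E=\rho_n\), so take \(\vartheta=\mathrm{id}\).
\end{itemize}

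\textbf{Backward reduction} \(\mathcal P_{J,B}^{\mathrm{sgl}}\le_{G,\mathrm{fq}}\mathcal S_J^{\mathrm{sgl}}\). Every input \((T,K)\in\Omega^{\mathrm{sgl}}_{J,B}\) is of the form \((A\oplus B,K)\), and \(A\) is uniquely determined by \(T\) as its compression to the first summand. Define \(E(A\oplus B,K):=(A,K)\). The target relation \(\Xi^{\mathrm{sgl}}_{J,B}(T,K)=\Xi^{\mathrm{sgl}}_J(A,K)\) is again the spectral identity. The queries are simulated as follows:
\begin{itemize}
\item \(\mu_{i,j}(E(T,K))=\nu^B_{(i,1),(j,1)}(T,K)\).
\item \(\rho_n(E(T,K))=\rho^B_n(T,K)\).
\end{itemize}
In both cases a single query and \(\vartheta=\mathrm{id}\) suffice.

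No other step requires more than unwinding definitions.
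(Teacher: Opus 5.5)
Your proposal is correct and follows the paper's proof essentially step for step. It uses the same encodings $(A,K)\mapsto(A\oplus B,K)$ and $(A\oplus B,K)\mapsto(A,K)$, the identity decoder on $\{0,1\}$, the spectral-union identity $\sigma(A\oplus B)=\sigma(A)\cup\sigma(B)$ combined with $\sigma(B)\cap J=\varnothing$, and the same query simulations, including constants via the dummy query $\rho_1$ for the second-block and mixed entries. The one difference is minor: the paper obtains the union identity for closed operators from Kato's direct-sum resolvent decomposition. Your restriction-of-the-inverse argument already works verbatim for closed operators, so your separate diagonal-entry argument for the unbounded case is valid but not needed.
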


\begin{proof}
We first prove
\[
\mathcal S_J^{\mathrm{sgl}}\le_{G,\mathrm{fq}} \mathcal P_{J,B}^{\mathrm{sgl}}.
\]

Define the encoding map
\[
E_B: \Omega_{\mathrm{diag}}\times \mathcal K_{\mathrm{sgl}}(J) \to \Omega_{J,B}^{\mathrm{sgl}}, \quad E_B(A,K):=(A\oplus B,K).
\]
Define the decoding map
\[
D_B:\{0,1\}\to\{0,1\}, \quad D_B(y):=y.
\]
This decoder is continuous because the codomain is discrete.

We now verify the target relation. For any closed operators $C$ and $D$ and any $\lambda\in\mathbb C$,
\[
(C\oplus D)-\lambda I=(C-\lambda I)\oplus(D-\lambda I)
\]
by \cite[Ch.~III, \S5.6, (5.23)-(5.24)]{Kato95}. Hence $\lambda\in\rho(C\oplus D)$ if and only if $\lambda\in\rho(C)\cap\rho(D)$:
if both resolvents exist, then
\[
\bigl((C-\lambda I)\oplus(D-\lambda I)\bigr)^{-1} =
(C-\lambda I)^{-1}\oplus(D-\lambda I)^{-1},
\]
and conversely, if the direct sum is invertible, its inverse preserves the two
coordinate subspaces and therefore restricts to inverses of $C-\lambda I$ and
$D-\lambda I$ (see also \cite[Prob.~III.5.37]{Kato95}). Thus
\[
\sigma(C\oplus D)=\sigma(C)\cup \sigma(D).
\]
Applying this with $C=A$ and $D=B$, and using
\[
K\subseteq J, \quad \sigma(B)\cap J=\varnothing,
\]
we get
\[
\sigma(A\oplus B)\cap K=\varnothing \iff \sigma(A)\cap K=\varnothing.
\]
Therefore
\[
D_B \bigl(\Xi_{J,B}^{\mathrm{sgl}}(E_B(A,K))\bigr) =\Xi_{J,B}^{\mathrm{sgl}}(A\oplus B,K)= \Xi_J^{\mathrm{sgl}}(A,K).
\]

It remains to verify finite-query simulation. Fix the dummy source evaluation
\[
\delta:=\rho_1\in\Lambda_J^{\mathrm{sgl}}.
\]

For a window query $\rho_n^B\in\Lambda_{J,B}^{\mathrm{sgl}}$, set
\[
m_{\rho_n^B}:=1, \quad \gamma_{\rho_n^B,1}:=\rho_n, \quad \vartheta_{\rho_n^B}(z):=z.
\]
Then
\[
\rho_n^B(E_B(A,K))=r_n(K)=\rho_n(A,K) =\vartheta_{\rho_n^B}\bigl(\gamma_{\rho_n^B,1}(A,K)\bigr).
\]

For a first-block operator query
\[
\nu_{(i,1),(j,1)}^B\in\Lambda_{J,B}^{\mathrm{sgl}},
\]
set
\[
m_{\nu_{(i,1),(j,1)}^B}:=1,\quad \gamma_{\nu_{(i,1),(j,1)}^B,1}:=\mu_{i,j}, \quad \vartheta_{\nu_{(i,1),(j,1)}^B}(z):=z.
\]
Then
\[
\nu_{(i,1),(j,1)}^B(E_B(A,K)) = \langle A e_j,e_i\rangle = \mu_{i,j}(A,K) = \vartheta_{\nu_{(i,1),(j,1)}^B} \bigl(\gamma_{\nu_{(i,1),(j,1)}^B,1}(A,K)\bigr).
\]

For a second-block operator query
\[
\nu_{(i,2),(j,2)}^B,
\]
set
\[
m_{\nu_{(i,2),(j,2)}^B}:=1, \quad
\gamma_{\nu_{(i,2),(j,2)}^B,1}:=\delta, \quad \vartheta_{\nu_{(i,2),(j,2)}^B}(z):=\langle B e_j,e_i\rangle.
\]
Since $B$ is fixed,
\[
\nu_{(i,2),(j,2)}^B(E_B(A,K)) = \langle B e_j,e_i\rangle =
\vartheta_{\nu_{(i,2),(j,2)}^B} \bigl(\gamma_{\nu_{(i,2),(j,2)}^B,1}(A,K)\bigr).
\]

For a mixed-block query
\[
\nu_{(i,1),(j,2)}^B \quad\text{or}\quad \nu_{(i,2),(j,1)}^B,
\]
set
\[
m_f:=1, \quad \gamma_{f,1}:=\delta, \quad \vartheta_f(z):=0.
\]
Because $A\oplus B$ is block diagonal,
\[
f(E_B(A,K))=0=\vartheta_f(\gamma_{f,1}(A,K)).
\]

Hence all clauses of \cref{def:fq-eval-reduction} are satisfied, proving
\[
\mathcal S_J^{\mathrm{sgl}}\le_{G,\mathrm{fq}} \mathcal P_{J,B}^{\mathrm{sgl}}.
\]

We now prove
\[
\mathcal P_{J,B}^{\mathrm{sgl}}\le_{G,\mathrm{fq}} \mathcal S_J^{\mathrm{sgl}}.
\]
Given $(T,K)\in \Omega_{J,B}^{\mathrm{sgl}}$, there exists a unique$A\in\Omega_{\mathrm{diag}}$ such that
\[
T=A\oplus B.
\]
Define
\[
F_B: \Omega_{J,B}^{\mathrm{sgl}} \to \Omega_{\mathrm{diag}}\times \mathcal K_{\mathrm{sgl}}(J), \quad
F_B(T,K):=(A,K),
\]
where $A$ is this unique first block. Let
\[
E'_B:=F_B, \quad D'_B:=\mathrm{id}_{\{0,1\}}.
\]
The same spectral-union argument as above yields
\[
\Xi_{J,B}^{\mathrm{sgl}}(T,K) = \Xi_J^{\mathrm{sgl}}(E'_B(T,K)).
\]

For a source window query $\rho_n$, use the family query $\rho_n^B$:
\[
m_{\rho_n}:=1, \quad \gamma_{\rho_n,1}:=\rho_n^B, \quad
\vartheta_{\rho_n}(z):=z.
\]
Then
\[
\rho_n(E'_B(T,K))=r_n(K)=\rho_n^B(T,K).
\]

For a source operator query $\mu_{i,j}$, use the first-block family query
$\nu_{(i,1),(j,1)}^B$:
\[
m_{\mu_{i,j}}:=1,\quad
\gamma_{\mu_{i,j},1}:=\nu_{(i,1),(j,1)}^B,\quad \vartheta_{\mu_{i,j}}(z):=z.
\]
Then, if $T=A\oplus B$,
\[
\mu_{i,j}(E'_B(T,K))=\langle A e_j,e_i\rangle = \langle T u_j^{(1)},u_i^{(1)}\rangle = \nu_{(i,1),(j,1)}^B(T,K).
\]

Thus all clauses of \cref{def:fq-eval-reduction} hold, and therefore
\[
\mathcal P_{J,B}^{\mathrm{sgl}}\le_{G,\mathrm{fq}} \mathcal S_J^{\mathrm{sgl}}.
\qedhere
\]
\end{proof}

\begin{theorem}[Family-pointwise exactness for spectrally irrelevant block stabilization]\label{thm:block-diagonal-family-exact}
For every $B\in\mathcal B_J$,
\[
\SCIG \bigl(\mathcal P_{J,B}^{\mathrm{sgl}} \bigr)=2.
\]
Consequently, the family
\[
\mathcal F_{J,\mathrm{stab}}^{\mathrm{sgl}} :=
\bigl(\mathcal P_{J,B}^{\mathrm{sgl}} \bigr)_{B\in\mathcal B_J}
\]
is family-pointwise exact at height $2$.
In particular, it is witness-space sharp at height $2$ and worst-case exact at
height $2$.
\end{theorem}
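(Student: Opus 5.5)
The plan is to invoke \cref{thm:SCI-monotonicity-fq-reduction} in both directions, using the two-sided reduction from \cref{prop:block-diagonal-two-sided}.

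\textbf{Step 1 (value of each member).} Fix $B\in\mathcal B_J$.
\begin{itemize}
\item[] From $\mathcal S_J^{\mathrm{sgl}}\le_{G,\mathrm{fq}} \mathcal P_{J,B}^{\mathrm{sgl}}$ and \cref{thm:SCI-monotonicity-fq-reduction}, we get $\SCIG(\mathcal S_J^{\mathrm{sgl}})\le \SCIG(\mathcal P_{J,B}^{\mathrm{sgl}})$.
\item[] From the reverse reduction $\mathcal P_{J,B}^{\mathrm{sgl}}\le_{G,\mathrm{fq}} \mathcal S_J^{\mathrm{sgl}}$, we get the opposite inequality.
\item[] Together with $\SCIG(\mathcal S_J^{\mathrm{sgl}})=2$ from \cref{prop:singleton-window-source}, this gives $\SCIG(\mathcal P_{J,B}^{\mathrm{sgl}})=2$.
\end{itemize}
Equivalently, one could cast Step 1 through \cref{cor:concrete-sufficiency-package}. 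Take $k=2$ and source $\mathcal S_2:=\mathcal S_J^{\mathrm{sgl}}$. Then (C1) is \cref{prop:singleton-window-source}, and (C2) is the first half of \cref{prop:block-diagonal-two-sided}. For (C3), the upper bound $\mathrm{UB}_2$, use the reverse reduction together with monotonicity.

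\textbf{Step 2 (family statement).} Since every member has $\SCIG=2$, the family $\mathcal F_{J,\mathrm{stab}}^{\mathrm{sgl}}$ is family-pointwise exact at height $2$ by \cref{def:sharpNot}. The family is nonempty because $\lambda I\in\mathcal B_J$ for any $\lambda\in\mathbb R\setminus J$. Witness-space sharpness and worst-case exactness at height $2$ then follow from \cref{prop:LogRelNot} and \cref{lem:WitWorstEq}.

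\textbf{Main obstacle.} No step here is genuinely hard, since the substantive content was already done in \cref{prop:block-diagonal-two-sided} and \cref{prop:singleton-window-source}. The one point to check is that the reverse reduction is really a finite-query evaluation reduction in the sense of \cref{def:fq-eval-reduction}. Two conditions must hold:
\begin{itemize}
\item[] the decoder $\mathrm{id}_{\{0,1\}}$ is continuous, which is trivially true;
\item[] each source query is simulated by a single family query.
\end{itemize}
Both were verified in that proof. Hence \cref{thm:SCI-monotonicity-fq-reduction} applies without modification.
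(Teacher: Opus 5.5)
Your proposal is correct and is essentially the paper's own proof. For each $B\in\mathcal B_J$ you sandwich $\SCIG(\mathcal P_{J,B}^{\mathrm{sgl}})$ between two copies of $\SCIG(\mathcal S_J^{\mathrm{sgl}})=2$, using the two-sided reduction of \cref{prop:block-diagonal-two-sided} with \cref{thm:SCI-monotonicity-fq-reduction}, and then read off the family-level statements from \cref{prop:LogRelNot}; your extra remarks (nonemptiness of $\mathcal B_J$ and the equivalent route through \cref{cor:concrete-sufficiency-package}) are consistent with the paper.
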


\begin{proof}
Fix $B\in\mathcal B_J$. By \cref{prop:singleton-window-source},
\[
\SCIG \bigl(\mathcal S_J^{\mathrm{sgl}} \bigr)=2.
\]
By \cref{prop:block-diagonal-two-sided} and \cref{thm:SCI-monotonicity-fq-reduction},
\[
\SCIG \bigl(\mathcal S_J^{\mathrm{sgl}} \bigr) \le
\SCIG \bigl(\mathcal P_{J,B}^{\mathrm{sgl}} \bigr)\le \SCIG \bigl(\mathcal S_J^{\mathrm{sgl}} \bigr).
\]
Hence
\[
\SCIG \bigl(\mathcal P_{J,B}^{\mathrm{sgl}} \bigr)=2.
\]
Since $B\in\mathcal B_J$ was arbitrary, every family member has exact height $2$,
so $\mathcal F_{J,\mathrm{stab}}^{\mathrm{sgl}}$ is family-pointwise exact at height $2$.
The final sentence follows from \cref{prop:LogRelNot}.
\end{proof}

\begin{remark}[Why this is the literature-backed fixed-window version]
The block-diagonal stabilization mechanism itself works much more generally. We formulate it here for singleton windows contained in a fixed compact interval
\(J\) because this is the version whose exact source classification is provided directly by \cite[Thm.~3.10 and Rem.~3.11]{ColbrookHansen22}.
\end{remark}

\begin{corollary}[Rigid principal calibration of the stabilized spectral ambient]\label{cor:rigid-principal-calibration-stabilized-spectral-ambient}
Set
\[
\mathcal U_{J,\mathrm{stab}} := \{\mathcal P_{J,B}^{\mathrm{sgl}}:B\in \mathcal B_J\}.
\]
Then
\[
        \mathcal O_2(\mathcal U_{J,\mathrm{stab}})=\mathcal U_{J,\mathrm{stab}},
\]
and
\[
        \mathcal U_{J,\mathrm{stab}} = \{\mathcal P\in \mathcal U_{J,\mathrm{stab}} : \mathcal S_J^{\mathrm{sgl}}\le_{G,fq} \mathcal P\} =
        \{\mathcal P\in \mathcal U_{J,\mathrm{stab}} : \mathcal P\equiv_{G,fq} \mathcal S_J^{\mathrm{sgl}}\}.
\]
If one enlarges the ambient class to
\[
        \mathcal U^+_{J,\mathrm{stab}} := \mathcal U_{J,\mathrm{stab}} \cup \{\mathcal S_J^{\mathrm{sgl}}\},
\]
then \(\mathcal U^+_{J,\mathrm{stab}}\) is principal at height \(2\) in the sense of \cref{def:exact-layer-basis-principal-upper-cone}, with principal generator \(\mathcal S_J^{\mathrm{sgl}}\).
\end{corollary}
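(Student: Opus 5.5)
The first claim, $\mathcal O_2(\mathcal U_{J,\mathrm{stab}})=\mathcal U_{J,\mathrm{stab}}$, follows directly from \cref{thm:block-diagonal-family-exact}. That theorem gives $\SCIG(\mathcal P_{J,B}^{\mathrm{sgl}})=2$ for every $B\in\mathcal B_J$, so every member of the ambient class lies in its exact level-$2$ layer.

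For the chain of set equalities we prove two inclusions.
\begin{itemize}
\item \emph{Middle set.} Each member $\mathcal P_{J,B}^{\mathrm{sgl}}$ satisfies $\mathcal S_J^{\mathrm{sgl}}\le_{G,\mathrm{fq}}\mathcal P_{J,B}^{\mathrm{sgl}}$ by the first half of \cref{prop:block-diagonal-two-sided}. Hence the middle set is all of $\mathcal U_{J,\mathrm{stab}}$.
\item \emph{Right-hand set.} The second half of the same proposition gives the reverse reduction $\mathcal P_{J,B}^{\mathrm{sgl}}\le_{G,\mathrm{fq}}\mathcal S_J^{\mathrm{sgl}}$. Together the two reductions give $\mathcal P_{J,B}^{\mathrm{sgl}}\equiv_{G,\mathrm{fq}}\mathcal S_J^{\mathrm{sgl}}$, using the equivalence of \cref{def:transport-degrees} for $\mathscr R=\mathscr R_{\mathrm{cont}}$. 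This is legitimate by \cref{rem:decoder-regular-fq-instances}.
\end{itemize}
The trivial inclusions of subsets then give equality of all three sets.

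For the enlarged class $\mathcal U^+_{J,\mathrm{stab}}$ we must verify \cref{def:exact-layer-basis-principal-upper-cone}, namely:
\begin{itemize}
\item $\mathcal S_J^{\mathrm{sgl}}\in\mathcal U^+_{J,\mathrm{stab}}$, which holds by construction;
\item $\SCIG(\mathcal S_J^{\mathrm{sgl}})=2$, which is \cref{prop:singleton-window-source};
\item $\mathcal O_2(\mathcal U^+_{J,\mathrm{stab}})=\uparrow^{\mathcal U^+_{J,\mathrm{stab}}}_{\le_{G,\mathrm{fq}}}(\mathcal S_J^{\mathrm{sgl}})$.
\end{itemize}

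For the last item, the left side is all of $\mathcal U^+_{J,\mathrm{stab}}$: the stabilized members have height $2$ by the first part, and the source has height $2$. The right side is also all of $\mathcal U^+_{J,\mathrm{stab}}$: the source satisfies $\mathcal S_J^{\mathrm{sgl}}\le_{G,\mathrm{fq}}\mathcal S_J^{\mathrm{sgl}}$ by reflexivity (\cref{prop:decoder-regular-fq-preorder}), and every stabilized member lies above $\mathcal S_J^{\mathrm{sgl}}$ by \cref{prop:block-diagonal-two-sided}. Hence the exact layer equals the principal upper cone.

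No step is a genuine obstacle. The only subtlety is bookkeeping: the ambient class needs enlarging because \cref{def:exact-layer-basis-principal-upper-cone} requires the generator to lie inside the ambient class. Whether $\mathcal S_J^{\mathrm{sgl}}$ already coincides with some $\mathcal P_{J,B}^{\mathrm{sgl}}$ is irrelevant, because the argument covers it either way. One should also note that $\mathcal B_J\neq\varnothing$, since it contains $\lambda I$ for $\lambda\in\mathbb R\setminus J$; this makes $\mathcal U_{J,\mathrm{stab}}$ nonempty.
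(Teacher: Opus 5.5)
Your proposal is correct and follows the paper's proof. The paper likewise gets $\mathcal O_2(\mathcal U_{J,\mathrm{stab}})=\mathcal U_{J,\mathrm{stab}}$ from \cref{thm:block-diagonal-family-exact}, gets the set equalities from the two-sided reductions in \cref{prop:block-diagonal-two-sided}, and declares the principal statement for $\mathcal U^+_{J,\mathrm{stab}}$ immediate from \cref{def:exact-layer-basis-principal-upper-cone}; you simply spell out what the paper leaves implicit there, namely $\SCIG(\mathcal S_J^{\mathrm{sgl}})=2$ via \cref{prop:singleton-window-source} and $\mathcal S_J^{\mathrm{sgl}}\le_{G,\mathrm{fq}}\mathcal S_J^{\mathrm{sgl}}$ via reflexivity.
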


\begin{proof}
By \cref{thm:block-diagonal-family-exact},
\[
\forall \mathcal P\in \mathcal U_{J,\mathrm{stab}},\quad \SCIG (\mathcal P)=2.
\]
Hence
\[
\mathcal O_2(\mathcal U_{J,\mathrm{stab}})=\mathcal U_{J,\mathrm{stab}}.
\]

By \cref{prop:block-diagonal-two-sided}, for every \(B\in \mathcal B_J\),
\[
\mathcal S_J^{\mathrm{sgl}}\le_{G,\mathrm{fq}} \mathcal P_{J,B}^{\mathrm{sgl}} \quad\text{and}\quad
\mathcal P_{J,B}^{\mathrm{sgl}}\le_{G,\mathrm{fq}} \mathcal S_J^{\mathrm{sgl}}.
\]
After adjoining \(\mathcal S_J^{\mathrm{sgl}}\) itself, the principal-ambient statement follows directly from \cref{def:exact-layer-basis-principal-upper-cone}.
\end{proof}

\begin{remark}[Transport-preorder reading]
The family \(\mathcal F^{\mathrm{sgl}}_{J,\mathrm{stab}}\) is generated by the single external source \(\mathcal S_J^{\mathrm{sgl}}\) under continuous finite-query transport. \Cref{prop:block-diagonal-two-sided} gives the stronger two-sided relation
\[
        \mathcal S_J^{\mathrm{sgl}}\le_{G,fq} \mathcal P^{\mathrm{sgl}}_{J,B} \quad\text{and}\quad
        \mathcal P^{\mathrm{sgl}}_{J,B}\le_{G,fq} \mathcal S_J^{\mathrm{sgl}}
\]
for $B\in B_J$. Thus every stabilized member lies in the same continuous finite-query transport degree as the source. If the source is adjoined to the ambient class, this becomes a principalambient in the literal sense of \cref{def:exact-layer-basis-principal-upper-cone}.
\end{remark}

\section{Limitations And Open Problems}{\label{sec:openproblems}}

The present paper gives a raw type-\(G\) sufficient transport theory for family exactness, but the converse directions remain open.

A separate question raised by \cref{sec:decoder-regular-transport} concerns the transport-degree structures associated with the natural decoder classes \(\mathscr R_{\mathrm{cont}}\) and \(\mathscr R_{\mathrm{Bor}}\).
\Cref{sec:LattTranspQuot} settles the unrestricted case negatively: once empty evaluation families are allowed, the corresponding quotients are not upper semilattices, hence not lattices.
On the other hand, \cref{sec:LattTranspQuot} also shows that on the nondegenerate subclass with
\[
\Omega\neq\varnothing \quad\text{and}\quad
\Lambda\neq\varnothing,
\]
the preorder is upward directed, and on the subclass with \(\Omega\neq\varnothing\) it is downward directed. Thus, for the natural decoder classes, the remaining degree-theoretic question is whether
binary least upper bounds and binary greatest lower bounds exist on the nondegenerate subclass.

The main open problems are therefore

\textbf{Open problem 1:}
The results of \cref{sec:AbstrUpgrTh} - \cref{sec:interinteupgrade} show that the converse from witness-space sharpness to family-pointwise exactness should be formulated in terms of exact transport bases on suitable ambient classes.

Let \(\mathcal U\) be a natural class of SCI computational problems and let \(k\in\mathbb N\). Determine whether the exact level-\(k\) layer
\[
\mathcal O_k(\mathcal U)=\{ \mathcal P\in \mathcal U: \SCIG (\mathcal P)=k\}
\]
admits an exact level-\(k\) transport basis \(\mathcal B_k(\mathcal U)\subseteq \mathcal U\), i.e.
\[
\mathcal O_k(\mathcal U) = \{\mathcal P\in \mathcal U:\exists \mathcal O\in \mathcal B_k(\mathcal U)\ \bigl( \mathcal O\le_{G,\mathrm{fq}} \mathcal P \bigr)\}.
\]
Further, classify when such a basis is principal, finitely generated non-principal, or complete non-principal. Finally, construct natural ambient assignments \(\mathcal F\mapsto \mathcal U(\mathcal F)\) for broad classes of families \(\mathcal F\), so that the resulting ambient exactness criterion yields a family-level necessary-and-sufficient condition for the upgrade from witness-space sharpness to family-pointwise exactness.

\textbf{Open problem 2:}
Fix $R\in\{\mathscr R_{\mathrm{cont}}, \mathscr R_{\mathrm{Bor}}\}$. Let
\[
\mathcal C_{\neq\varnothing,\neq\varnothing} :=
\bigl\{ \mathcal P=(\Xi,\Omega,(\mathcal M,d),\Lambda): \Omega\neq\varnothing,\ \Lambda\neq\varnothing \bigr\}
\]
be the nondegenerate subclass of SCI computational problems.

\Cref{sec:LattTranspQuot} shows that on the full class of SCI computational problems the quotient by $\equiv^\mathscr R_{\mathrm{fq}}$ is not an upper semilattice, hence not a lattice, and that on \(\mathcal C_{\neq\varnothing,\neq\varnothing}\) the preorder $\leq^\mathscr R_{\mathrm{fq}}$ is upward directed, while on
\[
\mathcal C_{\neq\varnothing} :=
\bigl\{ \mathcal P=(\Xi,\Omega,(\mathcal M,d),\Lambda): \Omega\neq\varnothing \bigr\}
\]
it is downward directed.

Determine whether the quotient induced by \(\equiv^\mathscr R_{\mathrm{fq}}\) on \(\mathcal C_{\neq\varnothing,\neq\varnothing}\) is (i) an upper semilattice, (ii) a lower semilattice, (iii) a lattice.

\textbf{Acknowledgments } The introduced, analyzed and used transport-obstruction-theory in \cref{sec:decoder-regular-transport} was highly inspired by the work in \cite{NP19}, as the author first constructed and tried understanding a continuous-exact-obstruction-base-theory for the continuous intermediate case introduced in \cite{Sor26} using the language of parametrized Wadge games.

\textbf{Statement } During the preparation of this work the author used UniBwM-ChatGPT5.4 in order to improve the language in the abstract and introduction. After using this tool, the author reviewed and edited the content as needed and takes full responsibility for the content of the published article.

\textbf{Conflict of interest } The author declares no conflict of interest.

\appendix
\section{(Non)-Lattice Structure Of Transport-Degree Quotients: Natural Classes}{\label{sec:LattTranspQuot}}

In this appendix we analyze the two natural decoder classes $\mathscr R_{\mathrm{cont}}$ and $\mathscr R_{\mathrm{Bor}}$ from \cref{def:admissible-decoder-class}. We first show that on the \textit{full} class of SCI computational problems the corresponding transport-degree quotients are not upper semilattices.
We then show that on the nondegenerate subclass with nonempty input class and nonempty evaluation family, the easy obstruction disappears: every pair has a common upper bound, and in fact also a common lower bound.

\begin{remark}[Empty evaluation families are allowed]\label{rem:empty-evaluation-families}
\Cref{def:SCIcompProb} does not require the evaluation family \(\Lambda\) to be nonempty. In particular, if the target map \(\Xi\) is constant, then \(\Lambda=\varnothing\) is allowed,
since the consistency condition
\[
\Xi(A)\neq \Xi(B)\Longrightarrow \exists f\in\Lambda\ (f(A)\neq f(B))
\]
is then vacuous.
\end{remark}

This convention concerns SCI computational problems as objects of the transport preorder. It does not imply that every such problem has a height-zero algorithm under \cref{def:GenAlgoSCI}, because the present definition of a general algorithm requires a nonempty finite query set.

\begin{proposition}[The full \(\mathscr R_{\mathrm{cont}}\)- and \(\mathscr R_{\mathrm{Bor}}\)-degree structures are not upper semilattices]\label{prop:full-cont-bor-not-upper-semilattice}
For each
\[
R\in\{\mathscr R_{\mathrm{cont}}, \mathscr R_{\mathrm{Bor}}\},
\]
the quotient of SCI computational problems by \(\equiv^\mathscr R_{\mathrm{fq}}\) is not an upper semilattice. In particular, it is not a lattice.
\end{proposition}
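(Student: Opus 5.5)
We will exhibit two SCI computational problems with no common upper bound under \(\leq^{\mathscr R}_{\mathrm{fq}}\). The hint from \cref{rem:empty-evaluation-families} is that \(\Lambda=\varnothing\) is allowed, and the transport data in \cref{def:decoder-regular-fq-transport} require, for every target evaluation \(f\), a \emph{positive} number \(m_f\in\mathbb N\) of source evaluations. If the source has \(\Lambda_{\mathcal Q}=\varnothing\), then an upper bound \(\mathcal R\) must itself have \(\Lambda_{\mathcal R}=\varnothing\), since there are no \(\gamma_{f,1}\in\varnothing\) available for any \(f\in\Lambda_{\mathcal R}\). An empty evaluation family in turn forces \(\Xi_{\mathcal R}\) to be constant by consistency. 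The argument then reduces to choosing two problems with empty evaluation families whose targets cannot both be decoded from one constant.

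The concrete choice is as follows. Let \(\mathcal Q_0\) have \(\Sigma_0=\{\ast\}\), \(\Lambda=\varnothing\), output space \((\{0,1\},d_{\mathrm{disc}})\) and \(\Psi_0(\ast)=0\); it is consistent vacuously. Let \(\mathcal Q_1\) have \(\Sigma_1=\varnothing\), \(\Lambda=\varnothing\), output space \((\{0\},0)\) and the empty target map. Suppose \(\mathcal R=(\Xi_{\mathcal R},\Omega_{\mathcal R},(\mathcal M_{\mathcal R},d_{\mathcal R}),\Lambda_{\mathcal R})\) is a common upper bound, so \(\mathcal Q_0\leq^{\mathscr R}_{\mathrm{fq}}\mathcal R\) and \(\mathcal Q_1\leq^{\mathscr R}_{\mathrm{fq}}\mathcal R\).

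From \(\mathcal Q_0\leq^{\mathscr R}_{\mathrm{fq}}\mathcal R\) we extract three facts. First, \(\Lambda_{\mathcal R}=\varnothing\), as explained above. Second, there is an encoding \(E:\{\ast\}\to\Omega_{\mathcal R}\), so \(\Omega_{\mathcal R}\neq\varnothing\). Third, there is a decoder \(D:\mathcal M_{\mathcal R}\to\{0,1\}\) with \(D(\Xi_{\mathcal R}(E(\ast)))=0\). From \(\mathcal Q_1\leq^{\mathscr R}_{\mathrm{fq}}\mathcal R\) we only get a map \(D_1:\mathcal M_{\mathcal R}\to\{0\}\), and no constraint comes from this.

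These two problems do not yet give a contradiction, because \(\mathcal R:=\mathcal Q_0\) works: \(\mathcal Q_1\leq\mathcal Q_0\) holds via the empty encoding and the constant decoder. So the second problem must be chosen differently. Take instead \(\mathcal Q_1'\) with \(\Sigma=\{\ast\}\), \(\Lambda=\varnothing\), output \((\{0,1\},d_{\mathrm{disc}})\) and \(\Psi(\ast)=1\), and take \(\mathcal Q_0\) as before. A common upper bound \(\mathcal R\) then has \(\Lambda_{\mathcal R}=\varnothing\) and \(\Omega_{\mathcal R}\neq\varnothing\), so \(\Xi_{\mathcal R}\equiv c\) by consistency. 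The decoders give \(D_0(c)=0\) and \(D_1(c)=1\), but these are different maps, so there is still no contradiction.

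The obstruction must therefore use a target that is non-constant. Keep \(\mathcal Q_0\) as the empty-evaluation problem, and let \(\mathcal Q_2\) be any nonempty-input problem with a non-constant target, for instance \(\mathcal P_\exists\) from \cref{def:coordinate-existence-source}. A common upper bound \(\mathcal R\) has \(\Lambda_{\mathcal R}=\varnothing\) and hence constant \(\Xi_{\mathcal R}\equiv c\). Then \(\Xi_\exists(a)=D(\Xi_{\mathcal R}(E(a)))=D(c)\) is constant, contradicting \(\Xi_\exists(0^\infty)=0\) and \(\Xi_\exists(e_1)=1\).

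This argument uses neither continuity nor measurability of decoders, so it covers both \(\mathscr R_{\mathrm{cont}}\) and \(\mathscr R_{\mathrm{Bor}}\) at once; in fact any admissible class works. We finish exactly as in \cref{prop:transport-degrees-not-lattice}. If the quotient were an upper semilattice, the degrees \([\mathcal Q_0]\) and \([\mathcal P_\exists]\) would have a join. Any representative of that join would be a common upper bound in the preorder, which we have just excluded.

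The main obstacle is the step \(\Lambda_{\mathcal R}=\varnothing\), which rests on reading \(m_f\in\mathbb N=\{1,2,\dots\}\) strictly, with at least one source evaluation \(\gamma_{f,1}\in\Lambda_{\mathcal Q_0}=\varnothing\) required for each \(f\). I will state this reading explicitly, since the whole argument depends on it.
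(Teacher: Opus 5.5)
Your final argument is correct and is essentially the paper's proof: an empty-evaluation singleton problem forces any common upper bound to have $\Lambda_{\mathcal R}=\varnothing$, because $m_f\in\mathbb N$ demands at least one source evaluation. Consistency then makes $\Xi_{\mathcal R}$ constant, and the target identity collapses any partner with non-constant target (the paper uses a two-point problem, you use $\mathcal P_\exists$), which is a contradiction. Your first two attempts are correctly diagnosed as failing and can simply be dropped; your remark that decoder regularity is never used, so the conclusion holds for every admissible class, is also right and applies equally to the paper's proof.
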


\begin{proof}
Fix
\[
R\in\{\mathscr R_{\mathrm{cont}}, \mathscr R_{\mathrm{Bor}}\}.
\]
Define
\[
\Omega_0:=\{\ast\}, \quad \mathcal M_0:=\{0\},
\]
where \(\mathcal M_0\) carries the unique metric \(d_0\), and let
\[
\Xi_0:\Omega_0\to \mathcal M_0, \quad
\Xi_0(\ast):=0.
\]
Set
\[
\Lambda_0:=\varnothing.
\]
Then
\[
\mathcal P_0:=(\Xi_0,\Omega_0,(\mathcal M_0,d_0),\Lambda_0)
\]
is an SCI computational problem by \cref{rem:empty-evaluation-families}. Next define
\[
\Omega_1:=\{a,b\}, \quad
\mathcal M_1:=\{0,1\},
\]
where \(\mathcal M_1\) carries the discrete metric
\[
d_{\mathrm{disc}}(u,v):=
\begin{cases}
	0,&u=v,\\
	1,&u\neq v.
\end{cases}
\]
Let
\[
\Xi_1:\Omega_1\to \mathcal M_1, \quad \Xi_1(a):=0, \quad
\Xi_1(b):=1,
\]
and define
\[
e:\Omega_1\to\mathbb C, \quad e(a):=0, \quad e(b):=1.
\]
Set
\[
\Lambda_1:=\{e\}.
\]
Then
\[
\mathcal P_1:=(\Xi_1,\Omega_1,(\mathcal M_1,d_{\mathrm{disc}}),\Lambda_1)
\]
is an SCI computational problem, since \(\Xi_1(a)\neq \Xi_1(b)\) and the single evaluation \(e\) separates \(a\) and \(b\).

We claim that there is no SCI computational problem
\[
\mathcal U=(\Xi_{\mathcal U},\Omega_{\mathcal U},(\mathcal M_{\mathcal U},d_{\mathcal U}),\Lambda_{\mathcal U})
\]
such that
\[
\mathcal P_0\leq^\mathscr R_{\mathrm{fq}} \mathcal U \quad\text{and}\quad \mathcal P_1\leq^\mathscr R_{\mathrm{fq}} \mathcal U.
\]
Assume, toward a contradiction, that such a \(\mathcal U\) exists. From
\[
\mathcal P_0\leq^\mathscr R_{\mathrm{fq}} \mathcal U,
\]
\cref{def:decoder-regular-fq-transport} yields the following: for every \(f\in\Lambda_{\mathcal U}\), there must exist a natural number \(m_f\in\mathbb N\) and source
evaluations
\[
\gamma_{f,1},\dots,\gamma_{f,m_f}\in\Lambda_0.
\]
But \(\Lambda_0=\varnothing\). Since \(m_f\in\mathbb N\), this is impossible unless
\[
\Lambda_{\mathcal U}=\varnothing.
\]

We now show that \(\Xi_{\mathcal U}\) must be constant on \(\Omega_{\mathcal U}\).
Suppose not. Then there exist \(A,B\in\Omega_{\mathcal U}\) with
\[
\Xi_{\mathcal U}(A)\neq \Xi_{\mathcal U}(B).
\]
Since \(\mathcal U\) is an SCI computational problem, the consistency condition from \cref{def:SCIcompProb} implies that there exists \(f\in\Lambda_{\mathcal U}\) such that $f(A)\neq f(B)$. But \(\Lambda_{\mathcal U}=\varnothing\), a contradiction. Hence \(\Xi_{\mathcal U}\) is constant on \(\Omega_{\mathcal U}\).

Now use
\[
\mathcal P_1\leq^\mathscr R_{\mathrm{fq}} \mathcal U.
\]
By \cref{def:decoder-regular-fq-transport}, there exist an encoding
\[
E_1:\Omega_1\to\Omega_{\mathcal U}
\]
and a decoder
\[
D_1:(\mathcal M_{\mathcal U},d_{\mathcal U})\to(\mathcal M_1,d_{\mathrm{disc}})
\]
such that for every \(x\in\Omega_1\),
\[
\Xi_1(x)=D_1(\Xi_{\mathcal U}(E_1(x))).
\]
Since \(\Xi_{\mathcal U}\) is constant, the right-hand side is constant in \(x\). Hence \(\Xi_1\) must be constant on \(\Omega_1\), contradicting
\[
\Xi_1(a)\neq \Xi_1(b).
\]

This contradiction proves that \(\mathcal P_0\) and \(\mathcal P_1\) have no common upper bound with respect to
\[
\leq^\mathscr R_{\mathrm{fq}}.
\]
Passing to the quotient by \(\equiv^\mathscr R_{\mathrm{fq}}\), the corresponding degrees still have no common upper bound. Therefore the quotient poset is not an upper semilattice.
\end{proof}

The preceding counterexample exploits an empty evaluation family. To isolate the nondegenerate situation relevant for the natural decoder classes, we now restrict to problems with
nonempty input class and nonempty evaluation family.

\begin{definition}[The nondegenerate subclasses]\label{def:nondegenerate-subclasses}
Define
\[
\mathcal C_{\neq\varnothing} := \bigl\{ \mathcal P=(\Xi,\Omega,(\mathcal M,d),\Lambda):\Omega\neq\varnothing \bigr\},
\]
and
\[
\mathcal C_{\neq\varnothing,\neq\varnothing} :=
\bigl\{ \mathcal P=(\Xi,\Omega,(\mathcal M,d),\Lambda):\Omega\neq\varnothing,\ \Lambda\neq\varnothing \bigr\}.
\]
\end{definition}

\begin{proposition}[Upward directedness on the nonempty-query subclass]\label{prop:upward-directed-nonempty}
Let
\[
R\in\{\mathscr R_{\mathrm{cont}}, \ \mathscr R_{\mathrm{Bor}}\}.
\]
Let
\[
\mathcal P_i=(\Xi_i,\Omega_i,(\mathcal M_i,d_i),\Lambda_i),
\]
where $i=0,1$, be SCI computational problems belonging to \(\mathcal C_{\neq\varnothing,\neq\varnothing}\). Then there exists an SCI computational problem \(\mathcal U\) such that
\[
\mathcal P_0\leq^\mathscr R_{\mathrm{fq}} \mathcal U \quad\text{and}\quad \mathcal P_1\leq^\mathscr R_{\mathrm{fq}} \mathcal U.
\]
In particular, the preorder \(\leq^\mathscr R_{\mathrm{fq}}\) is upward directed on \(\mathcal C_{\neq\varnothing,\neq\varnothing}\).
\end{proposition}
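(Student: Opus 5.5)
We will construct a common upper bound \(\mathcal U\) as a disjoint-union (``coproduct'') problem whose evaluations are pairs of evaluations from the two components. Concretely, set
\[
\Omega_{\mathcal U}:=(\{0\}\times\Omega_0)\sqcup(\{1\}\times\Omega_1),\qquad
\mathcal M_{\mathcal U}:=(\{0\}\times\mathcal M_0)\sqcup(\{1\}\times\mathcal M_1),
\]
with the metric \(d_{\mathcal U}((i,x),(i,y)):=\min\{d_i(x,y),1\}\) and \(d_{\mathcal U}((0,x),(1,y)):=1\). This is a metric, and each summand is clopen. Define \(\Xi_{\mathcal U}(i,A):=(i,\Xi_i(A))\).

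For the evaluations, nonemptiness of both \(\Lambda_i\) will be used. For each pair \((f,g)\in\Lambda_0\times\Lambda_1\) define \(h_{f,g}(0,A):=f(A)\) and \(h_{f,g}(1,A):=g(A)\). We also add one tag evaluation \(\tau(i,A):=i\). Then \(\Lambda_{\mathcal U}:=\{h_{f,g}\}\cup\{\tau\}\), which is nonempty. Consistency is checked in two cases. If two inputs lie in different summands, \(\tau\) separates them. If they lie in the same summand \(i\) and their targets differ, consistency of \(\mathcal P_i\) gives some \(f\in\Lambda_i\) separating them. Pairing this \(f\) with an arbitrary element of the other (nonempty) family gives a separating \(h\). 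So \(\mathcal U\in\mathcal C_{\neq\varnothing,\neq\varnothing}\).

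To show \(\mathcal P_0\leq^{\mathscr R}_{\mathrm{fq}}\mathcal U\), take the encoding \(E_0(A):=(0,A)\). For the decoder, set \(D_0(0,x):=x\) and \(D_0(1,y):=x_0\), where \(x_0\) is a fixed point of \(\mathcal M_0\). Such a point exists because \(\Omega_0\neq\varnothing\), so \(\mathcal M_0\ni\Xi_0(A)\) is nonempty. \(D_0\) is continuous: it is \(1\)-Lipschitz on distances below \(1\) within the first summand, since the truncated metric induces the same topology, and it is constant on the clopen second summand. Hence \(D_0\in\mathscr R_{\mathrm{cont}}\subseteq\mathscr R_{\mathrm{Bor}}\). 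The target identity \(D_0(\Xi_{\mathcal U}(E_0A))=\Xi_0(A)\) holds by construction.

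For query simulation, every \(h_{f,g}\) satisfies \(h_{f,g}(E_0A)=f(A)\), so we take \(m=1\), \(\gamma=f\), and \(\vartheta=\mathrm{id}\). The tag satisfies \(\tau(E_0A)=0\), so we take \(\gamma\) to be any fixed element of \(\Lambda_0\) (here \(\Lambda_0\neq\varnothing\) is used) and \(\vartheta\equiv0\). The case \(\mathcal P_1\) is symmetric, with \(\tau\equiv1\).

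The only delicate step is the decoder: it must be defined on the whole of \(\mathcal M_{\mathcal U}\) and be continuous. This is why we use a clopen disjoint union and need \(\mathcal M_i\neq\varnothing\), which follows from \(\Omega_i\neq\varnothing\). The other delicate point is that each query of \(\mathcal U\) must be simulated by at least one source query (\(m_f\in\mathbb N\)), including the constant tag. This is exactly where \(\Lambda_i\neq\varnothing\) is needed, mirroring the obstruction in \cref{prop:full-cont-bor-not-upper-semilattice}. Upward directedness then follows immediately.
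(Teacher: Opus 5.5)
Your proof is correct and follows the paper's construction. You take a tagged disjoint union of inputs and outputs with a truncated metric making the summands clopen, add the tag query $\tau$, encode by $A\mapsto(i,A)$, and decode by the identity on the own summand and a constant point $\Xi_i(A_i^\ast)$ on the other, with $\Lambda_i\neq\varnothing$ supplying the dummy query that simulates the constant tag. The differences are cosmetic: the paper uses zero-padded queries $\widetilde f_0,\widetilde g_1$ (so the other summand's queries are also simulated via the dummy query) and cross-summand distance $2$, whereas your paired queries $h_{f,g}$ already use nonemptiness of both $\Lambda_i$ in the consistency check.
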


\begin{proof}
Fix
\[
\mathcal P_i=(\Xi_i,\Omega_i,(\mathcal M_i,d_i),\Lambda_i),
\]
where $i=0,1$, with
\[
\Omega_i\neq\varnothing, \quad \Lambda_i\neq\varnothing.
\]
Choose
\[
A_i^\ast\in\Omega_i \quad\text{and}\quad q_i\in\Lambda_i,
\]
where $i=0,1$, which exists by the nonemptiness assumptions. Set
\[
m_i^\ast:=\Xi_i(A_i^\ast)\in \mathcal M_i.
\]
Now define the tagged disjoint-union input class
\[
\Omega_{\mathcal U}:=\bigl( \{0\}\times\Omega_0 \bigr) \cup \bigl( \{1\}\times\Omega_1 \bigr)
\]
and the tagged disjoint-union output space
\[
\mathcal M_{\mathcal U}:=\bigl( \{0\}\times \mathcal M_0 \bigr)\cup\bigl( \{1\}\times \mathcal M_1 \bigr),
\]
equipped with the metric
\[
d_{\mathcal U}\bigl( (i,x),(j,y) \bigr) :=
\begin{cases}
	\min\{1,d_i(x,y)\},&i=j,\\
	2,&i\neq j.
\end{cases}
\]
This is a metric: each tagged component is open and closed in \(\mathcal M_{\mathcal U}\). Further define the target map
\[
\Xi_{\mathcal U}:\Omega_{\mathcal U}\to \mathcal M_{\mathcal U}, \quad \Xi_{\mathcal U}(i,A):=(i,\Xi_i(A)).
\]
For each \(f\in\Lambda_0\), define
\[
\widetilde f_0:\Omega_{\mathcal U}\to\mathbb C, \quad \widetilde f_0(0,A):=f(A), \quad \widetilde f_0(1,B):=0.
\]
For each \(g\in\Lambda_1\), define
\[
\widetilde g_1:\Omega_U\to\mathbb C, \quad \widetilde g_1(1,B):=g(B), \quad \widetilde g_1(0,A):=0.
\]
Finally, define the tag query
\[
\tau:\Omega_{\mathcal U}\to\mathbb C, \quad \tau(i,A):=i.
\]
Set
\[
\Lambda_{\mathcal U} := \{\widetilde f_0:f\in\Lambda_0\}\cup \{\widetilde g_1:g\in\Lambda_1\}\cup \{\tau\}.
\]
We check that
\[
\mathcal U:=(\Xi_{\mathcal U},\Omega_{\mathcal U},(\mathcal M_{\mathcal U},d_{\mathcal U}),\Lambda_{\mathcal U})
\]
is an SCI computational problem.

Let \((i,A),(j,B)\in\Omega_{\mathcal U}\) and assume
\[
\Xi_{\mathcal U}(i,A)\neq \Xi_{\mathcal U}(j,B).
\]
If \(i\neq j\), then
\[
\tau(i,A)=i\neq j=\tau(j,B),
\]
so \(\tau\in\Lambda_{\mathcal U}\) separates the two inputs.

If \(i=j\), then
\[
\Xi_i(A)\neq \Xi_i(B).
\]
Since \(\mathcal P_i\) is an SCI computational problem, \cref{def:SCIcompProb} yields some \(h\in\Lambda_i\) such that
\[
h(A)\neq h(B).
\]
If \(i=0\), then
\[
\widetilde h_0(0,A)=h(A)\neq h(B)=\widetilde h_0(0,B).
\]
If \(i=1\), then
\[
\widetilde h_1(1,A)=h(A)\neq h(B)=\widetilde h_1(1,B).
\]
Thus \(\mathcal U\) satisfies the consistency condition from \cref{def:SCIcompProb}.

We now prove
\[
\mathcal P_i\leq^\mathscr R_{\mathrm{fq}} \mathcal U,
\]
where $i=0,1$. Fix \(i\in\{0,1\}\), and write \(j:=1-i\).

Define the encoding
\[
E_i:\Omega_i\to\Omega_{\mathcal U}, \quad E_i(A):=(i,A).
\]

Define the decoder
\[
D_i:\mathcal M_{\mathcal U}\to \mathcal M_i
\]
by
\[
D_i((i,x)):=x,\quad D_i((j,y)):=m_i^\ast.
\]
Since the tagged components of \(\mathcal M_{\mathcal U}\) are clopen, and \(D_i\) restricts to the identity on \(\{i\}\times \mathcal M_i\) and to a constant map on \(\{j\}\times \mathcal M_j\), the map \(D_i\) is continuous. Hence
\[
D_i\in \mathscr R_{\mathrm{cont}}\subseteq \mathscr R_{\mathrm{Bor}},
\]
so in either case $D_i\in R$.

For every \(A\in\Omega_i\),
\[
D_i(\Xi_{\mathcal U}(E_i(A))) = D_i(i,\Xi_i(A)) = \Xi_i(A).
\]
Thus the target relation holds and it remains to simulate the target queries from \(\Lambda_{\mathcal U}\).

First, let \(f\in\Lambda_i\). Then
\[
\widetilde f_i(E_i(A)) = \widetilde f_i(i,A) = f(A).
\]
So we may take
\[
m_{\widetilde f_i}:=1, \quad \gamma_{\widetilde f_i,1}:=f, \quad \vartheta_{\widetilde f_i}(z):=z.
\]

Second, let \(g\in\Lambda_j\). Then on the image of \(E_i\),
\[
\widetilde g_j(E_i(A)) = \widetilde g_j(i,A) = 0.
\]
Hence we may take
\[
m_{\widetilde g_j}:=1, \quad \gamma_{\widetilde g_j,1}:=q_i, \quad \vartheta_{\widetilde g_j}(z):=0.
\]

Third, for the tag query \(\tau\), one has on the image of \(E_i\),
\[
\tau(E_i(A))=\tau(i,A)=i.
\]
Hence we may take
\[
m_\tau:=1,\quad \gamma_{\tau,1}:=q_i, \quad \vartheta_\tau(z):=i.
\]

Therefore every query in \(\Lambda_{\mathcal U}\) is reconstructed from finitely many source queries of \(\mathcal P_i\). Thus
\[
\mathcal P_i\leq^\mathscr R_{\mathrm{fq}} \mathcal U.
\]
Since \(i\in\{0,1\}\) was arbitrary, both reductions hold.
\end{proof}

\begin{proposition}[Downward directedness on the nonempty-input subclass]\label{prop:downward-directed-nonempty}
Let
\[
R\in\{\mathscr R_{\mathrm{cont}}, \mathscr R_{\mathrm{Bor}}\}.
\]
Let
\[
\mathcal P_i=(\Xi_i,\Omega_i,(\mathcal M_i,d_i),\Lambda_i),
\]
where $i=0,1$, be SCI computational problems belonging to \(\mathcal C_{\neq\varnothing}\). Then there exists an SCI computational problem \(\mathcal L\) such that
\[
\mathcal L\leq^\mathscr R_{\mathrm{fq}} \mathcal P_0 \quad\text{and}\quad \mathcal L\leq^\mathscr R_{\mathrm{fq}} \mathcal P_1.
\]
In particular, the preorder \(\leq^\mathscr R_{\mathrm{fq}}\) is downward directed on \(\mathcal C_{\neq\varnothing}\).
\end{proposition}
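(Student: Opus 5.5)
We will build a common lower bound with only one input and a constant target, so that encoding into either problem is trivial. Take a singleton input class \(\Omega_{\mathcal L}:=\{\ast\}\). Since \(\Omega_0\neq\varnothing\) and \(\Omega_1\neq\varnothing\), we can choose \(A_0^\ast\in\Omega_0\) and \(A_1^\ast\in\Omega_1\). Let the output space be the one-point metric space \(\mathcal M_{\mathcal L}:=\{0\}\), set \(\Xi_{\mathcal L}(\ast):=0\), and take a single constant evaluation \(c(\ast):=0\), so that \(\Lambda_{\mathcal L}:=\{c\}\). This gives
\[
\mathcal L:=(\Xi_{\mathcal L},\{\ast\},(\{0\},d_0),\{c\}).
\]
Consistency holds vacuously because \(\Xi_{\mathcal L}\) is constant. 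We use the nonempty family \(\{c\}\) rather than \(\varnothing\), because the reduction \(\mathcal L\leq^{\mathscr R}_{\mathrm{fq}}\mathcal P_i\) must supply at least one source evaluation for every \(f\in\Lambda_i\), and \(m_f\in\mathbb N\) forbids an empty list.

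To establish \(\mathcal L\leq^{\mathscr R}_{\mathrm{fq}}\mathcal P_i\) for \(i=0,1\), we mirror \cref{prop:upward-directed-nonempty} with the roles reversed. The encoding is \(E_i(\ast):=A_i^\ast\). For the decoder \(D_i:(\mathcal M_i,d_i)\to(\{0\},d_0)\) we take the constant map to \(0\). This is the only map into a one-point space, and it is continuous, so \(D_i\in\mathscr R_{\mathrm{cont}}\subseteq\mathscr R_{\mathrm{Bor}}\). The target identity is immediate:
\[
D_i(\Xi_i(E_i(\ast)))=0=\Xi_{\mathcal L}(\ast).
\]
For query simulation, fix \(f\in\Lambda_i\). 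Put \(m_f:=1\) and \(\gamma_{f,1}:=c\), and define \(\vartheta_f\) on \(\operatorname{im}(c)=\{0\}\) by \(\vartheta_f(0):=f(A_i^\ast)\). Then \(f(E_i(\ast))=\vartheta_f(c(\ast))\), as required. If \(\Lambda_i=\varnothing\), this clause is vacuous, which is why only \(\Omega_i\neq\varnothing\) is needed.

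We expect no real obstacle. The only points needing care are to use a nonempty \(\Lambda_{\mathcal L}\), so that the finite-query clause can be met, and to use nonemptiness of \(\Omega_i\), so that the encoding exists. Downward directedness on \(\mathcal C_{\neq\varnothing}\) then follows once we note that \(\mathcal L\) itself lies in \(\mathcal C_{\neq\varnothing}\), indeed in \(\mathcal C_{\neq\varnothing,\neq\varnothing}\), since \(\{\ast\}\neq\varnothing\).
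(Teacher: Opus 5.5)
Your proposal is correct and is essentially the paper's own proof. It uses the same one-point problem \(\mathcal L\) with the single constant evaluation \(c\), the encoding \(E_i(\ast)=A_i^\ast\), the constant decoder into \(\{0\}\), and query simulation through \(c\) with \(\vartheta_f\equiv f(A_i^\ast)\). You also note explicitly that \(\mathcal L\) itself lies in \(\mathcal C_{\neq\varnothing}\) (indeed in \(\mathcal C_{\neq\varnothing,\neq\varnothing}\)), which the directedness claim needs and which the paper leaves implicit.
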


\begin{proof}
Fix
\[
\mathcal P_i=(\Xi_i,\Omega_i,(\mathcal M_i,d_i),\Lambda_i),
\]
where $i=0,1$, with \(\Omega_i\neq\varnothing\). Choose
\[
A_i^\ast\in\Omega_i,
\]
where $i=0,1$ again. Define
\[
\Omega_{\mathcal L}:=\{\ast\}, \quad \mathcal M_{\mathcal L}:=\{0\},
\]
where \(\mathcal M_{\mathcal L}\) carries the unique metric \(d_{\mathcal L}\). Let
\[
\Xi_{\mathcal L}:\Omega_{\mathcal L} \to \mathcal M_{\mathcal L}, \quad \Xi_{\mathcal L}(\ast):=0.
\]
Define
\[
c:\Omega_{\mathcal L} \to\mathbb C, \quad c(\ast):=0,
\]
and set
\[
\Lambda_{\mathcal L}:=\{c\}.
\]
Then
\[
\mathcal L:=(\Xi_{\mathcal L},\Omega_{\mathcal L},(\mathcal M_{\mathcal L},d_{\mathcal L}),\Lambda_{\mathcal L})
\]
is an SCI computational problem.

We now prove
\[
\mathcal L\leq^\mathscr R_{\mathrm{fq}} \mathcal P_i
\]
for $i=0,1$. Fix \(i\in\{0,1\}\). Define the encoding
\[
E_i:\Omega_{\mathcal L}\to\Omega_i, \quad E_i(\ast):=A_i^\ast.
\]

Define the decoder
\[
D_i:\mathcal M_i\to \mathcal M_{\mathcal L}, \quad D_i(y):=0.
\]
This map is constant, hence continuous, so
\[
D_i\in \mathscr R_{\mathrm{cont}}\subseteq \mathscr R_{\mathrm{Bor}}.
\]
Therefore \(D_i\in R\). For the unique source input \(\ast\),
\[
D_i(\Xi_i(E_i(\ast))) = D_i(\Xi_i(A_i^\ast)) = 0 = \Xi_L(\ast).
\]
So the target relation holds.

Now fix any \(f\in\Lambda_i\). Since
\[
E_i(\ast)=A_i^\ast,
\]
the value
\[
f(E_i(\ast))=f(A_i^\ast)
\]
is constant. Hence we may take
\[
m_f:=1, \quad \gamma_{f,1}:=c, \quad \vartheta_f(z):=f(A_i^\ast).
\]
Then
\[
f(E_i(\ast)) = f(A_i^\ast) = \vartheta_f(c(\ast)) = \vartheta_f(\gamma_{f,1}(\ast)).
\]
Thus all clauses of \cref{def:decoder-regular-fq-transport} are satisfied, and therefore
\[
\mathcal L\leq^\mathscr R_{\mathrm{fq}} \mathcal P_i.
\]
Since \(i\in\{0,1\}\) was arbitrary, the claim follows.
\end{proof}

    \bibliographystyle{alpha}  
    \bibliography{bib.bib}
\end{document}